\newtheorem{thm}{Theorem}[section]
\newtheorem{lem}{Lemma}[section]
\newtheorem{cor}{Corollary}[section]
\newtheorem{remark}{Remark}[section]
\theoremstyle{definition}
\begin{document}
\title{Some new bounds for the signless Laplacian energy of a graph\footnote{This work is supported
by the National Natural Science Foundation of China (Grant nos. 11971274, 11531011, 11671344).}}

\author{
{\small Peng Wang,\ \ Qiongxiang Huang\footnote{Corresponding author.
\newline{\it \hspace*{5mm}Email addresses:} huangqx@xju.edu.cn (Q.X. Huang).}\;\ \ }\\[2mm]
\footnotesize College of Mathematics and Systems Science, Xinjiang University, Urumqi, Xinjiang 830046, China\\}
\date{ }
\maketitle
{\flushleft\large\bf Abstract}
For a simple graph $G$ with $n$ vertices, $m$ edges and signless Laplacian eigenvalues $q_{1} \geq q_{2} \geq \cdots \geq q_{n} \geq 0$, its the signless Laplacian energy $QE(G)$ is defined as $QE(G) = \sum_{i=1}^{n}|q_{i} - \bar{d} |$, where $\bar{d} = \frac{2m}{n}$ is the average vertex degree of $G$.
In this paper, we obtain two  lower bounds ( see Theorem \ref{thm-1} and Theorem \ref{thm-2} ) and one upper bound for $QE(G)$ ( see Theorem \ref{thm-3} ), which improve some known bounds of $QE(G)$, and moreover, we determine the corresponding extremal graphs that achieve our bounds. By subproduct, we also get some bounds for $QE(G)$ of regular graph $G$.

\vspace{0.1cm}
\begin{flushleft}
\textbf{Keywords:} Signless Laplacian eigenvalues; Signless Laplacian energy; First Zagreb index.
\end{flushleft}
\textbf{AMS Classification:} 05C50
\section{Introduction}

Let $G(V,E)$ be a simple graph with vertex set $V(G)=\{v_{1},v_{2}, \ldots, v_{n}\}$ and edge set $E(G)=\{e_{1},e_{2}, \ldots, e_{m}\}$. The adjacency matrix $A(G) = (a_{ij})$ of $G$ is defined by $a_{ij} = 1$ if $i \sim j$, and $a_{ij} = 0$ otherwise. The eigenvalues of $G$ are those of $A(G)$, which are denoted by $\lambda_{1} \geq \lambda_{2} \geq \cdots \geq \lambda_{n}$.  I. Gutman in \cite{Gutman1} introduced the notion of energy of $G$:
$E(G) = \sum_{i=1}^{n}| \lambda_{i}|$, which is received great attention and has made great progress
 in both chemical and mathematical applications\cite{Li}.

The signless Laplacian matrix of $G$ is defined as $Q(G) = D(G) + A(G)$, where $D(G) = diag(d_{1},d_{2}, \ldots , d_{n})$ is the diagonal matrix of vertex degrees of $G$. The eigenvalues of $Q(G)$ are called the signless Laplacian eigenvalues of $G$ ( short for $Q$-eigenvalues ), which are denoted by $q_1\geq q_2\geq\cdots \geq q_n$, and all its $Q$-eigenvalues along with their multiplicities consist of the spectrum called $Q$-spectrum and denoted by $Spec_{Q}(G)$. The Laplacian matrix of $G$ is defined as $L(G) = D(G) - A(G)$, its $L$-eigenvalues and $L$-spectrum are similarly defined.

The motivation for Laplacian energy comes from graph energy \cite{Gutman1, Li}. The Laplacian energy of a graph $G$ as put forward by Gutman and Zhou \cite{Gutman2} is defined as $LE(G) = \sum_{i=1}^{n}|  \mu_{i} - \frac{2m}{n}  |.$ This equation is an extension of the concept of graph energy.
Similar to the Laplacian energy, the signless Laplacian energy of a graph $G$ as put forward by Ganie, Hilal and Pirzada \cite{Ganie1} is defined as
$QE(G) = \sum_{i=1}^{n}|  q_{i} - \frac{2m}{n}  |.$
Particularly, if $G$ is a regular graph, then $q_i=\lambda_i+\frac{2m}{n}$. Thus $QE(G)=\sum_{i=1}^{n}| q_{i}-\frac{2m}{n} |=\sum_{i=1}^{n}|\lambda_{i}|=E(G)$. However, $LE(G)\neq E(G)$ for regular graph $G$.
The Laplacain energy and signless Laplacian energy are applied not only to theoretical organic chemistry \cite{Gutman3}, but also to image processing and information theory \cite{Radenkovi}.

{\flushleft\bf There are some results related to the lower bounds of $QE(G)$.}
For example, in 2017, Hilal A. Ganie et al. give a lower bound for $QE(G)$ in Theorem 3.3 in \cite{Ganie1}:
\begin{equation}\label{EQ-eq-1}\\\\QE(G) \geq 2(\frac{M_{1}}{m}-\frac{2m}{n})\end{equation}
with equality holds if and only if $G \cong K_{1,n-1}$, where $n=|V(G)|$, $m=|E(G)|$ and $M_{1} = M_{1}(G) = \sum_{i=1}^{n}d_{i}^{2}$ is
the first Zagreb index proposed by Gutman and Trinajsti\'{c} \cite{Trinajsti}.
The other two lower bounds for $QE(G)$ in Corollary 3.2 and Theorem 3.1 in \cite{Ganie1} are
\begin{equation}\label{EQ-eq-2}\\\\QE(G) \geq 2\Delta+2-\frac{4m}{n}\end{equation}
with equality holds if and only if $G \cong K_{1,n-1}$,
and
\begin{equation}\label{EQ-eq-3}\\\\QE(G) \geq \Delta + \delta +\sqrt{(\Delta-\delta)^{2}+4\Delta}-\frac{4m}{n}\end{equation}
with equality holds if and only if $G \cong K_{1,n-1}$.
In 2018, Hilal A. Ganie, et al. give two lower bounds for $QE(G)$ in Theorem 2.3 and Theorem 2.10 in \cite{Ganie2}:
\begin{equation}\label{EQ-eq-4}\\\\
QE(G) \geq\left\{\begin{array}{ll}{\frac{2 M_{1}}{m}+2 d_{2}-\frac{8m}{n}} &\mbox{if $v_{1}\nsim v_{2}$}, \\
{\frac{2 M_{1}}{m}+\Delta+d_{2}-\sqrt{(\Delta-d_{2})^{2}+4}-\frac{8m}{n}}  &\mbox{if $v_{1}\thicksim v_{2}$}
\end{array}\right.
\end{equation}
with equality holds if and only if $G \cong K_{n-2,2}$, where $v_{1}$ and $v_{2}$ are the vertices of the largest and second largest degree in $G$, and
$QE(G) \geq \frac{8 m}{n}-2 \delta$ if $G$ is bipartite graph
and otherwise,
\begin{equation}\label{EQ-eq-5}\\\\
Q E(G) \geq\left\{\begin{array}{ll}{\frac{8 m}{n}-2 \delta-2 d_{n-1}} &\mbox{if $v_{n}\thicksim v_{n-1}$}, \\
{\frac{8 m}{n}-(2d_{n-1}+\Delta+\delta-\sqrt{(\Delta-\delta)^{2}+4})} &\mbox{if $v_{n}\nsim v_{n-1}$}
\end{array}\right.
\end{equation}
with equality holds if and only if $G \cong K_{1,2}$, where $v_{n}$ and $v_{n-1}$ are the vertices of the smallest and second smallest degree in $G$.

{\flushleft\bf There are some results related  to the upper bounds of $QE(G)$.}
For example, in 2011, Nair Abreua, et al. give two upper bounds for $QE(G)$ in Theorem 5 in \cite{Abreu}:
\begin{equation}\label{EQ-eq-21}\\\\
QE(G) \leq 4m(1-\frac{1}{n})
\end{equation}
with equality holds if and only if  either $G$ is a null graph (that is a graph with n vertices and without edges) or $G$ is a graph with only one edge plus $n-2$ isolated vertices, and
\begin{equation}\label{EQ-eq-22}\\\\
QE(G) \leq \bigg[1+\sqrt{\frac{m}{2}-(\frac{2m}{n}-1)}\bigg]\sqrt{2(M_{1}-2m)}.
\end{equation}
In 2015, Rao Li gives an upper bound for $QE(G)$ in Theorem 2.5 in \cite{Li2}:
\begin{equation}\label{EQ-eq-23}\\\\
QE(G) \leq \frac{2m}{n-1} + n - 2+\sqrt{(n-2)(\frac{2m^{2}}{n-1}+\frac{8m\Delta-4m^{2}}{n}+mn-4)}
\end{equation}
with equality holds if and only if $G \cong K_{2}.$
In 2017, Ganie and Pirzada give an upper bound for $QE(G)$ in Theorem 4.1 in \cite{Ganie1}:
\begin{equation}\label{EQ-eq-24}\\\\
QE(G) \leq 2(2m + 1 - \Delta - \frac{2m}{n})
\end{equation}
with equality holds if and only if $G \cong K_{1,n-1}$.

In this paper, we obtain some new bounds for $QE(G)$ which improve some known results. Moreover, we determine the extremal graphs that achieve our bounds of $QE(G)$. As subproduct, we also obtain some bounds for $QE(G)$ of regular graph $G$, which can also be viewed as the bounds of $E(G)$, additionally, we also characterize the corresponding extremal graphs.
The paper is organized as follows. In Section 2, we list some previously known results. In Section 3, we first give new bounds of  $QE(G)$ and determine their extremal graphs, next we give some bounds for $QE(G)$ of regular graph $G$ and determine the corresponding extremal graphs. In Section 4,  we give some examples and  tables, from which one can see that  our bounds are closer to exact values of $QE(G)$.

\section{Preliminaries}
In this section, we will cite some result related with $Q$-eigenvalue of a graph $G$ for the later use.
\begin{lem}[{\cite{Cvetkovic}}]\label{lem-9}
For any graph $G$ , the multiplicity of the Q-eigenvalue 0 is equal to the number of components that is bipartite.
\end{lem}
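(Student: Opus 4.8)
The plan is to exploit the incidence-matrix factorization of the signless Laplacian. Let $R=R(G)$ be the $n\times m$ vertex--edge incidence matrix, defined by $R_{ve}=1$ if the vertex $v$ is an endpoint of the edge $e$ and $R_{ve}=0$ otherwise. A direct computation shows that the $(u,u)$ entry of $RR^{T}$ counts the edges incident with $u$, namely $d_{u}$, while for $u\neq v$ the $(u,v)$ entry counts the edges joining $u$ and $v$, namely $a_{uv}$; hence $Q(G)=D(G)+A(G)=RR^{T}$. In particular $Q(G)$ is positive semidefinite, so every $q_{i}\ge 0$, and the multiplicity of the eigenvalue $0$ equals $\dim\ker Q(G)=\dim\ker(RR^{T})=\dim\ker(R^{T})$, the last equality being the standard fact that $R^{T}x=0\iff RR^{T}x=0$, since $x^{T}RR^{T}x=\|R^{T}x\|^{2}$.

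Next I would translate $\ker(R^{T})$ into combinatorial language. A vector $x\in\mathbb{R}^{n}$ lies in $\ker(R^{T})$ precisely when, for every edge $e=uv$, the corresponding coordinate of $R^{T}x$ vanishes, i.e. $x_{u}+x_{v}=0$. This condition involves only the two endpoints of each edge, so it decouples across connected components: if $G$ has components $G_{1},\dots,G_{c}$, then $\dim\ker(R^{T}(G))=\sum_{i=1}^{c}\dim\ker(R^{T}(G_{i}))$. It therefore suffices to prove that for a connected graph $H$ one has $\dim\ker(R^{T}(H))=1$ if $H$ is bipartite and $0$ otherwise.

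For the per-component claim, fix a vertex $v_{0}$ of $H$; if $x$ satisfies $x_{u}=-x_{v}$ along every edge, then walking along a path from $v_{0}$ to any vertex $w$ gives $x_{w}=(-1)^{\ell}x_{v_{0}}$ with $\ell$ the length of the path, so $x$ is determined by the scalar $x_{v_{0}}$ and $\dim\ker(R^{T}(H))\le 1$. If $H$ is bipartite with parts $X,Y$, then every edge runs between $X$ and $Y$, so the vector equal to $1$ on $X$ and $-1$ on $Y$ is a nonzero kernel element and the dimension is exactly $1$. If $H$ is not bipartite it contains a closed walk of odd length $\ell$ through $v_{0}$, forcing $x_{v_{0}}=(-1)^{\ell}x_{v_{0}}=-x_{v_{0}}$, hence $x_{v_{0}}=0$ and $x=0$, so the dimension is $0$. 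Summing over components yields the stated formula. The argument is a repackaging of standard facts; the only point needing care is the consistency step above, namely that the alternating-sign assignment along paths is well defined exactly when the component contains no odd closed walk, which is the classical characterization of bipartiteness.
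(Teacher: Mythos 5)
The paper does not prove this lemma; it is quoted from the reference [Cvetkovi\'{c} et al.] without proof. Your argument is correct and complete, and it is the standard proof of this fact: the factorization $Q(G)=RR^{T}$ with the unoriented incidence matrix, the identification of the $0$-eigenspace with $\ker(R^{T})=\{x: x_{u}+x_{v}=0 \text{ for every edge } uv\}$, and the per-component analysis showing this space is one-dimensional exactly on bipartite components. Nothing is missing; the consistency issue you flag (odd closed walks forcing $x=0$) is handled correctly, so your write-up would serve as a self-contained substitute for the citation.
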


\begin{lem}[{\cite{Brouwer}}]\label{lem-15}
(Interlacing Theorem) If $M$ is a real symmetric $n\times n$ matrix, let $\lambda_{1}(M)\geq \lambda_{2}(M)\geq \cdots \geq \lambda_{n}(M)$ denote its eigenvalues in nonincreasing order.
Suppose $A$ is a real symmetric $n\times n$ matrix and  $B$ is a principal submatrix of $A$ with order $m\times m$. Then, for $i=1,2,...,m$,  $\lambda_{n-m+i}(A)\leq\lambda_{i}(B)\leq\lambda_{i}(A)$.
\end{lem}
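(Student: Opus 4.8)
The plan is to derive both interlacing inequalities from the Courant--Fischer min--max theorem. Write $R_{M}(x)=x^{\top}Mx/(x^{\top}x)$ for the Rayleigh quotient of a real symmetric matrix $M$ at a nonzero vector $x$. Courant--Fischer states that for an $n\times n$ real symmetric matrix $A$ and each $1\le i\le n$,
\[
\lambda_{i}(A)=\max_{\substack{S\le\mathbb{R}^{n}\\ \dim S=i}}\ \min_{0\ne x\in S}R_{A}(x)=\min_{\substack{S\le\mathbb{R}^{n}\\ \dim S=n-i+1}}\ \max_{0\ne x\in S}R_{A}(x).
\]

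First I would fix the combinatorial setup. Since $B$ is a principal submatrix of $A$ of order $m$, it is obtained by retaining the rows and columns indexed by some $I\subseteq\{1,\dots,n\}$ with $|I|=m$. Let $W\le\mathbb{R}^{n}$ be the coordinate subspace spanned by $\{e_{j}:j\in I\}$, so $\dim W=m$, and let $\iota:\mathbb{R}^{m}\to W$ be the canonical isometry placing the coordinates of $y\in\mathbb{R}^{m}$ into the positions indexed by $I$. A one-line computation gives $R_{A}(\iota y)=R_{B}(y)$ for all $0\ne y\in\mathbb{R}^{m}$; thus the Rayleigh quotient of $B$ on $\mathbb{R}^{m}$ is exactly that of $A$ restricted to $W$.

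Next I would prove the two bounds. For $\lambda_{i}(B)\le\lambda_{i}(A)$: in the ``max--min'' form, any $i$-dimensional subspace of $\mathbb{R}^{m}$ maps under $\iota$ to an $i$-dimensional subspace of $W\subseteq\mathbb{R}^{n}$ on which $R_{B}$ and $R_{A}$ agree, so the maximum defining $\lambda_{i}(A)$ (taken over \emph{all} $i$-dimensional subspaces of $\mathbb{R}^{n}$) is at least the maximum restricted to such images, which equals $\lambda_{i}(B)$. For $\lambda_{n-m+i}(A)\le\lambda_{i}(B)$: use the ``min--max'' form for $B$, namely $\lambda_{i}(B)=\min\{\max_{0\ne y\in S}R_{B}(y):S\le\mathbb{R}^{m},\ \dim S=m-i+1\}$. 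Since $m-i+1=n-(n-m+i)+1$, for each such $S$ the image $\iota(S)$ is an admissible subspace in the min--max formula for $\lambda_{n-m+i}(A)$, so $\lambda_{n-m+i}(A)\le\max_{0\ne y\in S}R_{B}(y)$; minimizing over $S$ gives the claim. Alternatively, the second inequality is simply the first applied to $-A$ and $-B$ together with $\lambda_{i}(-M)=-\lambda_{n+1-i}(M)$.

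The argument is almost entirely routine, and the only genuine ingredient is the Courant--Fischer theorem itself, which the paper may just cite (this is where \cite{Brouwer} enters). The one place to be careful is the index bookkeeping in the ``min--max'' step --- matching the constraint $\dim S=m-i+1$ for subspaces of $\mathbb{R}^{m}$ with the constraint $\dim S=n-(n-m+i)+1$ required by the formula for $\lambda_{n-m+i}(A)$ --- and, if a self-contained proof is wanted, verifying Courant--Fischer by a dimension count against the span of the top $i$ (respectively bottom $n-i+1$) eigenvectors of $A$. I do not anticipate any real obstacle, since this is the classical Cauchy interlacing theorem.
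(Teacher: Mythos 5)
Your proof is correct: the Rayleigh-quotient identification $R_{A}(\iota y)=R_{B}(y)$ on the coordinate subspace $W$, combined with the max--min form for $\lambda_{i}(A)$ and the min--max form for $\lambda_{n-m+i}(A)$ (with the dimension count $m-i+1=n-(n-m+i)+1$ handled correctly), is exactly the classical Courant--Fischer proof of Cauchy interlacing, and the alternative reduction via $-A$, $-B$ is also fine. Note, however, that the paper does not prove this lemma at all: it is quoted as a known result from the reference \cite{Brouwer}, so your argument simply supplies the standard proof that the cited textbook contains, rather than diverging from anything in the paper itself.
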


\begin{lem}[{\cite{Feng}}]\label{lem-4}
Let $G$ be a graph with $n$ vertices and $m$ edges and let $q_{1}$ be its largest $Q$-eigenvalue. Then
$q_{1} \geq \frac{4m}{n}$
with equality if and only if $G$ is a regular graph.
\end{lem}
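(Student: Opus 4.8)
The natural approach is to bound $q_{1}$ from below by evaluating the Rayleigh quotient of $Q(G)$ at a well-chosen test vector, and the all-ones vector is the obvious candidate since it interacts cleanly with both $D(G)$ and $A(G)$. Concretely, since $Q(G)$ is a real symmetric matrix, its largest eigenvalue admits the variational description $q_{1}=\max_{x\neq 0}\frac{x^{\top}Q(G)x}{x^{\top}x}$. Taking $x=\mathbf{1}=(1,\dots,1)^{\top}\in\mathbb{R}^{n}$, I would compute $\mathbf{1}^{\top}Q(G)\mathbf{1}=\mathbf{1}^{\top}D(G)\mathbf{1}+\mathbf{1}^{\top}A(G)\mathbf{1}=\sum_{i=1}^{n}d_{i}+\sum_{i,j}a_{ij}=2m+2m=4m$, where both sums equal $2m$ by the handshake lemma, while $\mathbf{1}^{\top}\mathbf{1}=n$. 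This immediately gives $q_{1}\ge \frac{4m}{n}$.

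For the equality characterization I would argue as follows. If $G$ is $r$-regular, then $D(G)\mathbf{1}=r\mathbf{1}$ and $A(G)\mathbf{1}=r\mathbf{1}$, so $Q(G)\mathbf{1}=2r\mathbf{1}$ and $2r$ is a $Q$-eigenvalue; combining $q_{1}\ge \frac{4m}{n}=2r$ (using $2m=nr$) with the standard upper bound $q_{1}\le 2\Delta=2r$ yields $q_{1}=2r=\frac{4m}{n}$. Conversely, suppose $q_{1}=\frac{4m}{n}$. Then $\mathbf{1}$ attains the maximum in the Rayleigh quotient above; expanding $\mathbf{1}$ in an orthonormal basis of eigenvectors of $Q(G)$ shows that this forces $\mathbf{1}$ to lie in the eigenspace of $q_{1}$, i.e. $Q(G)\mathbf{1}=q_{1}\mathbf{1}$. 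Reading the $i$-th coordinate of this identity gives $2d_{i}=q_{1}$ for every $i$, hence all vertex degrees are equal and $G$ is regular.

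I do not expect a serious obstacle here: the lower bound is a one-line Rayleigh-quotient estimate. The only place that needs a little care is the equality case, specifically justifying that equality in $\frac{\mathbf{1}^{\top}Q(G)\mathbf{1}}{\mathbf{1}^{\top}\mathbf{1}}=q_{1}$ forces $\mathbf{1}$ to be a genuine eigenvector for $q_{1}$ (handled by the eigenbasis expansion) and making sure the converse does not secretly rely on connectivity (it does not, since $Q(G)\mathbf{1}=q_{1}\mathbf{1}$ is read off coordinate by coordinate, and a disconnected regular graph still has $\mathbf{1}$ as a top eigenvector).
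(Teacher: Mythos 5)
Your argument is correct. Note that the paper does not prove this lemma at all—it is quoted as a known preliminary result from Feng, Li and Zhang—so there is no internal proof to compare against; your Rayleigh-quotient argument with the all-ones vector (using $\mathbf{1}^{\top}Q(G)\mathbf{1}=4m$, and for equality the standard fact that attaining the maximum forces $\mathbf{1}$ into the $q_{1}$-eigenspace, whence $2d_{i}=q_{1}$ for every $i$) is the standard, self-contained proof of exactly this statement, and your converse correctly avoids any connectivity assumption by only invoking the inequality $q_{1}\le 2\Delta$ (Lemma 2.10 of the paper) rather than its equality case.
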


\begin{lem}[{\cite{Oboudi}}]\label{lem-2}
Let $n \geq 1$ be an integer and $a_{1} \geq a_{2} \geq \cdots \geq a_{n}$ be some nonnegative real numbers. Then
$ \sum_{i=1}^{n}a_{i}(a_{1} + a_{n}) \geq \sum_{i=1}^{n}a_{i}^{2} + na_{1}a_{n}$
with equality holds if and only if $a_{1} = \cdots = a_{s}$ and $a_{s+1} = \cdots = a_{n}$ for some $s\in \{1,\ldots,n\}$.
\end{lem}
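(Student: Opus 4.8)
The plan is to reduce the inequality to a sum of manifestly nonnegative terms by a single algebraic regrouping. Moving everything to one side, the claimed inequality $\sum_{i=1}^{n}a_{i}(a_{1}+a_{n}) \ge \sum_{i=1}^{n}a_{i}^{2} + n a_{1} a_{n}$ is equivalent to
\[
\sum_{i=1}^{n}\bigl(a_{i}(a_{1}+a_{n}) - a_{i}^{2} - a_{1}a_{n}\bigr) \ge 0 .
\]
First I would record the pointwise identity $a_{i}(a_{1}+a_{n}) - a_{i}^{2} - a_{1}a_{n} = (a_{1}-a_{i})(a_{i}-a_{n})$, which one verifies by expanding the right-hand side. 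Thus the inequality is exactly $\sum_{i=1}^{n}(a_{1}-a_{i})(a_{i}-a_{n}) \ge 0$.

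The second step is immediate: since $a_{1}\ge a_{2}\ge\cdots\ge a_{n}$, for every index $i$ we have $a_{1}-a_{i}\ge 0$ and $a_{i}-a_{n}\ge 0$, so each summand $(a_{1}-a_{i})(a_{i}-a_{n})$ is nonnegative, and summing yields the bound. No heavier machinery (convexity, Lagrange multipliers, majorization) is needed; the only nonobvious move is spotting the factorization, and I expect that to be the one genuine obstacle — everything after it is routine.

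For the equality characterization, equality forces every summand to vanish, i.e.\ for each $i$ either $a_{i}=a_{1}$ or $a_{i}=a_{n}$. I would then let $s$ be the largest index with $a_{s}=a_{1}$; monotonicity gives $a_{i}<a_{1}$ for $i>s$, hence $a_{i}=a_{n}$ for all such $i$, so $a_{1}=\cdots=a_{s}$ and $a_{s+1}=\cdots=a_{n}$ (the second block being empty when $s=n$). Conversely, if the sequence has this two-block form, then $a_{1}-a_{i}=0$ for $i\le s$ and $a_{i}-a_{n}=0$ for $i>s$, so the sum is zero and equality holds. The only place needing a touch of care is passing from ``$a_{i}\in\{a_{1},a_{n}\}$ for all $i$'' to the stated block structure, where monotonicity is invoked a second time.
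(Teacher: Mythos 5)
Your proof is correct. The paper does not prove this lemma itself (it is quoted from the cited reference of Oboudi), and your argument — rewriting the difference of the two sides as $\sum_{i=1}^{n}(a_{1}-a_{i})(a_{i}-a_{n})\ge 0$ and reading off the equality case from the vanishing of each nonnegative summand — is exactly the standard proof of this inequality, so there is nothing to add.
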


\begin{lem}[{\cite{Lima}}]\label{lem-5}
Let $G$ be a graph with $n$ vertices and $m$ edges. Then
$q_{min}(G) \leq \frac{2m}{n} - 1$
with equality if and only if $G$ is a complete graph.
\end{lem}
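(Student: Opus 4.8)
The statement is the one cited from \cite{Lima}, so my plan is to reconstruct the argument, organising it around the trichotomy \emph{disconnected / bipartite / non-bipartite}. Some care with the hypotheses is needed first: the inequality fails for a disconnected graph such as one edge plus isolated vertices, and its extremal family for disconnected graphs is the disjoint union of equal-order cliques rather than $K_n$ alone, so I would read (or assume) that $G$ is connected with at least one edge. If $G$ is bipartite, then $q_{min}(G)=0$ by Lemma \ref{lem-9}, while connectivity forces $m\geq n-1$, hence $\bar{d}-1=\frac{2m}{n}-1\geq\frac{n-2}{n}\geq 0$; the bound then holds, and equality forces $\bar{d}=1$, i.e.\ $m=n/2$, which together with $m\geq n-1$ gives $n=2$ and $G\cong K_2$. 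This settles the bipartite case.

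Next, suppose $G$ is $k$-regular. Then $Q(G)=kI+A(G)$ and $\bar{d}=k$, so $q_{min}(G)=k+\lambda_n(A(G))$. Since $G$ has an edge it contains $K_2$ as a subgraph, and the $2\times 2$ principal submatrix of $A(G)$ indexed by that edge has eigenvalues $\pm1$; by the Interlacing Theorem (Lemma \ref{lem-15}) we get $\lambda_n(A(G))\leq -1$, whence $q_{min}(G)\leq k-1=\bar{d}-1$. Equality forces $\lambda_n(A(G))=-1$, which for a connected graph is possible only when $G\cong K_n$, since a connected non-complete graph contains an induced $P_3$ and interlacing then gives $\lambda_n(A(G))\leq-\sqrt2<-1$. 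So in the regular case the inequality and its equality case are both settled cleanly.

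It remains to treat $G$ connected, non-bipartite and non-regular, and to show the \emph{strict} inequality $q_{min}(G)<\bar{d}-1$. The only fully elementary estimate, $q_{min}(G)\leq\delta$ (the Rayleigh quotient of $Q(G)$ at the standard basis vector of a minimum-degree vertex), already finishes the case $\delta\leq\bar{d}-1$. The genuinely hard sub-case is the ``almost regular'' one, $\delta<\bar{d}<\delta+1$, where this trivial bound overshoots the target $\bar{d}-1$ by up to $1$. Here I would look for a test vector adapted to the degree sequence of $G$: the obvious candidates — a difference $e_u-e_v$ of two non-adjacent vertices, or the degree-deviation vector with entries $\bar{d}-d_i$ (which lies in $\mathbf 1^\perp$) — each certify the bound for some graphs, e.g.\ for $K_n-e$, but can be strictly weaker than $\bar{d}-1$, or even fail, for nearly regular graphs, so one must either combine several such vectors or pick the pair/weights according to where the degree imbalance of $G$ lies. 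An alternative route is extremal: show (via Kelmans-type edge/vertex switchings that do not decrease $q_{min}$) that among all graphs with the given $n$ and $m$ the maximum of $q_{min}$ is attained at a threshold graph, and then verify the inequality directly on threshold graphs, where $Q(G)$ has enough structure for an explicit computation, with strictness for $m<\binom{n}{2}$ following from that computation.

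I expect the main obstacle to be exactly this last sub-case: bounding $q_{min}(G)$ from above when $G$ is close to, but not, regular, because then neither the degree bound nor any of the ``symmetric'' test vectors is sharp, and one is pushed into either a delicate choice of certificate or the switching/extremal-graph machinery. The complete details are carried out in \cite{Lima}.
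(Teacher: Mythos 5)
The first thing to note is that the paper contains no proof of this statement to compare against: Lemma \ref{lem-5} is one of the preliminaries quoted verbatim from \cite{Lima}, so the citation is the ``proof''. Your remarks on the hypotheses are sound (for $K_2\cup(n-2)K_1$ the inequality fails, and $2K_2$ attains equality without being complete, so connectivity must indeed be read into the statement), and the cases you do settle are argued correctly: the connected bipartite case via $q_{min}=0$ (Lemma \ref{lem-9}) together with $m\ge n-1$; the regular case via interlacing with an edge and the induced-$P_3$ argument for the equality analysis; and the case $\delta\le\frac{2m}{n}-1$ via the Rayleigh quotient of $Q$ at a minimum-degree vertex $e_u$ (where in fact $q_{min}<\delta$ strictly for a connected graph with an edge, since $q_{min}=R(e_u)$ would make $e_u$ an eigenvector of $Q$ and force $u$ to be isolated, so no equality issue arises in this branch).

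The difficulty is that what remains is not a loose end but the substance of the lemma, and your proposal leaves it unproven. For $G$ connected, non-bipartite and non-regular with $\delta>\frac{2m}{n}-1$ (the ``almost regular'' window $\delta<\frac{2m}{n}<\delta+1$), none of the tools you actually deploy --- $q_{min}\le\delta$, interlacing on an edge, $q_{min}=0$ for bipartite graphs --- reaches the target $\frac{2m}{n}-1$, and you say so yourself. What you offer instead is a list of candidate certificates ($e_u-e_v$ for non-adjacent $u,v$, the degree-deviation vector) together with the admission that they can be strictly weaker than the bound or fail, and an unexecuted extremal strategy (switchings that should not decrease $q_{min}$, reduction to threshold graphs) for which neither the switching operation nor its monotonicity is formulated, let alone proved; the write-up then defers to \cite{Lima}. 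That is a genuine gap: as it stands the proposal is a correct reduction of the problem to its hard case plus a plan of attack, not a proof of the statement. To count as a proof it would need either the actual argument of \cite{Lima} or a completed version of one of the sketched strategies, including the equality characterization in that remaining case.
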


\begin{lem}[{\cite{Brouwer}}]\label{lem-6}
Let $G$ be a graph and $q_{1}$ be its $Q$-spectral radius. Then the following hold:\\
(1) If $G$ is connected, then the multiplicity of $q_{1}$ is one;\\
(2) For every eigenvalue $q_{i}$ of $G$, $|q_{i}| \leq q_{1}$.
\end{lem}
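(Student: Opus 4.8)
\medskip
\noindent\emph{Proof sketch.} The plan is to deduce both parts from one structural fact: the signless Laplacian $Q(G)=D(G)+A(G)$ is a symmetric \emph{nonnegative} matrix, and in fact positive semidefinite. Once this is in hand, (1) and (2) are just instances of the Perron--Frobenius theory applied to $Q(G)$.

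First I would settle (2). Writing $Q(G)=RR^{T}$, where $R$ is the $n\times m$ (unsigned) vertex--edge incidence matrix of $G$, shows that $Q(G)$ is positive semidefinite, so every $Q$-eigenvalue obeys $q_i\ge 0$; hence $|q_i|=q_i\le q_1$ for all $i$. (Alternatively, since $Q(G)$ is entrywise nonnegative, its spectral radius $\rho(Q(G))$ is itself an eigenvalue and must therefore coincide with $q_1$; and $\rho(Q(G))\ge |q_i|$ for every $i$ by the definition of spectral radius.)

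For (1), suppose $G$ is connected. The off-diagonal support of $Q(G)$ is precisely that of $A(G)$, because $D(G)$ only modifies diagonal entries; hence the digraph associated with $Q(G)$ is the strongly connected symmetric digraph of $G$, so $Q(G)$ is an \emph{irreducible} nonnegative matrix. By the Perron--Frobenius theorem for irreducible nonnegative matrices, $\rho(Q(G))$ is a simple eigenvalue, and by the previous paragraph $\rho(Q(G))=q_1$; therefore $q_1$ has multiplicity one.

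I do not expect a genuine obstacle here; the only two points that deserve a word of justification are that $q_1$ really is the spectral radius of $Q(G)$ (immediate from $Q(G)\ge 0$) and that adjoining the diagonal matrix $D(G)$ preserves irreducibility. If one wished to avoid invoking Perron--Frobenius as a black box, the same conclusions follow directly: when $G$ is connected with at least one edge, all diagonal entries of $Q(G)$ are positive, so $Q(G)$ is primitive and some power $Q(G)^{k}$ is entrywise positive, which yields a strictly positive $q_1$-eigenvector and forces the $q_1$-eigenspace to be one-dimensional, while the degenerate case $G\cong K_{1}$ is trivial.
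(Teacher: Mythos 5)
Your proof is correct: the paper states this lemma as a quoted preliminary from \cite{Brouwer} and gives no proof of its own, so there is nothing to diverge from. Your argument --- writing $Q(G)=RR^{T}$ with $R$ the unsigned incidence matrix to get positive semidefiniteness (hence $|q_i|=q_i\le q_1$), and then applying Perron--Frobenius to the irreducible nonnegative matrix $Q(G)$ for a connected $G$ (irreducibility being unaffected by the diagonal term $D(G)$, with $K_1$ handled trivially) --- is exactly the standard proof found in the cited reference, and both steps you flag as needing justification are handled correctly.
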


\begin{lem}[{\cite{Wang}}]\label{lem-7}
Let $G$ be a graph with $n$ vertices and $m$ edges and let $q_{1}$ be its largest $Q$-eigenvalue. Then
$q_{1}\leq \frac{2m + \sqrt{m(n^{3}-n^{2}-2mn+4m)}}{n}$
with equality holds if and only if $G$ is a complete graph.
\end{lem}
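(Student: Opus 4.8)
The plan is to combine the two basic trace identities for the signless Laplacian with a degree--sum inequality in order to produce a quadratic inequality satisfied by $q_1$. Writing $Q=Q(G)$, we have $\sum_{i=1}^{n}q_i=\mathrm{tr}(Q)=\sum_{v}d_v=2m$ and $\sum_{i=1}^{n}q_i^{2}=\mathrm{tr}(Q^{2})=\sum_{v}(d_v^{2}+d_v)=M_1+2m$, using $\mathrm{tr}(A(G)^{2})=2m$ and $\mathrm{tr}(D(G)A(G))=0$. Since every $q_i\ge 0$, applying the Cauchy--Schwarz (power--mean) inequality to $q_2,\dots,q_n$ gives $\sum_{i=2}^{n}q_i^{2}\ge\frac{1}{n-1}\bigl(\sum_{i=2}^{n}q_i\bigr)^{2}=\frac{(2m-q_1)^{2}}{n-1}$, and hence
\[
q_1^{2}+\frac{(2m-q_1)^{2}}{n-1}\ \le\ M_1+2m .
\]

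To eliminate the degree sequence, I would invoke de Caen's inequality $M_1=\sum_{v}d_v^{2}\le m\bigl(\frac{2m}{n-1}+n-2\bigr)$, which turns the last display into
\[
q_1^{2}+\frac{(2m-q_1)^{2}}{n-1}\ \le\ m\Bigl(\tfrac{2m}{n-1}+n-2\Bigr)+2m=\frac{2m^{2}}{n-1}+mn .
\]
Multiplying through by $n-1$ and rearranging yields the quadratic inequality $n q_1^{2}-4m q_1+(2m^{2}+mn-mn^{2})\le 0$, whose discriminant equals $4m(n^{3}-n^{2}-2mn+4m)$ (nonnegative, since $q_1$ is itself a solution). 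Taking the larger root gives exactly $q_1\le\frac{2m+\sqrt{m(n^{3}-n^{2}-2mn+4m)}}{n}$, as claimed.

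For the equality characterization, a direct computation with $Q(K_n)=(n-2)I+J$ shows $q_1(K_n)=2(n-1)$ and $q_2=\dots=q_n=n-2$, and one checks that both the Cauchy--Schwarz step and de Caen's inequality hold with equality for $K_n$; thus the bound is attained. Conversely, equality throughout forces $q_2=\dots=q_n$ (equality in Cauchy--Schwarz) together with equality in de Caen's inequality. The first condition means $Q(G)$ has at most two distinct eigenvalues; intersecting this with the equality case of de Caen's inequality leaves only the complete graph (when $G$ is connected and $m>0$; the degenerate cases collapse to $K_1$ or $K_2$).

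The step I expect to be most delicate is this equality analysis: it requires recalling the equality graphs of de Caen's inequality and the classification of connected graphs whose signless Laplacian spectrum has two distinct values, and then verifying that their common members are exactly the complete graphs. By contrast, the derivation of the bound itself is routine once the two trace identities and de Caen's inequality are available; the only point demanding care is the direction of the estimates --- we need a lower bound on $\sum_{i\ge 2}q_i^{2}$ but an upper bound on $M_1$.
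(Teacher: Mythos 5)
The paper does not prove this lemma at all: it is quoted as a preliminary from \cite{Wang}, so there is no in-paper argument to compare against. Your derivation of the inequality itself is correct and self-contained up to one external ingredient: the two trace identities (which are the paper's Lemma \ref{lem-10}), the Cauchy--Schwarz step $\sum_{i\ge 2}q_i^2\ge (2m-q_1)^2/(n-1)$, and de Caen's bound $M_1\le m\bigl(\tfrac{2m}{n-1}+n-2\bigr)$ do combine, after multiplying by $n-1$, into $nq_1^2-4mq_1+(2m^2+mn-mn^2)\le 0$, whose larger root is exactly $\frac{2m+\sqrt{m(n^3-n^2-2mn+4m)}}{n}$; I checked the algebra and the discriminant. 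De Caen's inequality is not among the paper's preliminaries, so you are importing a nontrivial known result, but that is legitimate.

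The genuine gap is the equality characterization, which you yourself flag but do not close. Equality does force $q_2=\cdots=q_n$ and equality in de Caen's bound, but ``intersecting'' these two equality cases is precisely the work, and your parenthetical restriction to connected $G$ is not cosmetic: the ``only if'' direction as stated actually fails for disconnected graphs. For $G=K_2\cup K_1$ one has $n=3$, $m=1$, $q_1=2$ and $\frac{2m+\sqrt{m(n^3-n^2-2mn+4m)}}{n}=\frac{2+\sqrt{16}}{3}=2$, so the bound is attained by a non-complete graph (both your equality conditions hold there: $q_2=q_3=0$ and $M_1=2=m(\tfrac{2m}{n-1}+n-2)$). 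So no argument can prove the stated ``iff'' without a connectedness (or similar) hypothesis, and your sketch should say so explicitly rather than leave ``degenerate cases collapse to $K_1$ or $K_2$'' unexamined. Under connectedness the converse can be finished cleanly, and in fact without de Caen's equality cases: equality in Cauchy--Schwarz gives $q_2=\cdots=q_n$; a connected graph on $n\ge 2$ vertices cannot have all $Q$-eigenvalues equal (that would force $A=0$), so $G$ has exactly two distinct $Q$-eigenvalues and hence $G\cong K_n$ by the paper's Lemma \ref{lem-14}. Replacing your appeal to the equality graphs of de Caen's inequality by this argument would make the connected case rigorous with tools already in the paper.
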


\begin{lem}[{\cite{Oboudi}}]\label{lem-3}
Let $\alpha$, $x$, $y$ and $\beta$ be some positive real numbers such that $0<\alpha \leq x \leq y \leq \beta$. Then
$\frac{\sqrt{\alpha \beta}}{\alpha+\beta} \leq \frac{\sqrt{x y}}{x+y}$
with equality holds if and only if $x=\alpha$ and $y=\beta$.
\end{lem}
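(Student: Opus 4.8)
The plan is to square both sides (legitimate since all of $\alpha,\beta,x,y$ are positive, hence both sides are positive) and reduce the claim to the nonnegativity of a product of two linear factors. After squaring, the asserted inequality $\frac{\sqrt{\alpha\beta}}{\alpha+\beta}\le\frac{\sqrt{xy}}{x+y}$ becomes $\alpha\beta(x+y)^2\le xy(\alpha+\beta)^2$; expanding and cancelling the common term $2\alpha\beta xy$ from both sides leaves $\alpha\beta(x^2+y^2)\le xy(\alpha^2+\beta^2)$. The key algebraic step is the identity
\[
xy(\alpha^2+\beta^2)-\alpha\beta(x^2+y^2)=(\beta x-\alpha y)(\beta y-\alpha x),
\]
so the whole lemma comes down to showing $(\beta x-\alpha y)(\beta y-\alpha x)\ge 0$.

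Next I would verify that each factor is nonnegative under the hypothesis $0<\alpha\le x\le y\le\beta$. Indeed, $\beta\ge y$ gives $\beta x\ge yx$ and $x\ge\alpha$ gives $yx\ge y\alpha$, so $\beta x\ge\alpha y$; likewise $\beta\ge\alpha$ and $y\ge x$ give $\beta y\ge\alpha y\ge\alpha x$. Hence the product is $\ge 0$, which proves the inequality.

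For the equality case, the product vanishes iff $\beta x=\alpha y$ or $\beta y=\alpha x$. The equation $\beta y=\alpha x$ together with $\beta\ge\alpha$ and $y\ge x$ forces $\alpha=\beta$ and $x=y$, hence $\alpha=x=y=\beta$, which is a degenerate instance of the first case; and $\beta x=\alpha y$ inserted into the chain $\alpha y\le xy\le x\beta$ — whose first inequality is strict unless $\alpha=x$ and whose second is strict unless $y=\beta$ — forces $x=\alpha$ and $y=\beta$. Conversely, $x=\alpha,\ y=\beta$ obviously gives equality. The only real obstacle is careful bookkeeping: getting the factorization right and tracking the tightness of the intermediate steps in the equality analysis. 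I would also remark on an alternative one-line route — writing $\frac{\sqrt{ab}}{a+b}=\bigl(\sqrt{a/b}+\sqrt{b/a}\bigr)^{-1}$ and using that $t\mapsto t+t^{-1}$ is strictly increasing on $[1,\infty)$, which reduces the claim to the single inequality $\beta/\alpha\ge y/x$ — but I would present the factoring argument in detail, since it makes the equality discussion transparent.
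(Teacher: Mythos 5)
Your proof is correct and complete: the squaring step is legitimate, the factorization $xy(\alpha^{2}+\beta^{2})-\alpha\beta(x^{2}+y^{2})=(\beta x-\alpha y)(\beta y-\alpha x)$ checks out, both factors are nonnegative under $0<\alpha\le x\le y\le\beta$, and your case analysis pins the equality case down to $x=\alpha$, $y=\beta$ exactly as the lemma asserts. Note, however, that the paper itself offers no proof to compare against: Lemma \ref{lem-3} is quoted as a preliminary result from \cite{Oboudi}, so your argument is a self-contained verification rather than a variant of an in-paper proof. The alternative route you sketch (monotonicity of $t\mapsto t+t^{-1}$ on $[1,\infty)$ applied to $t=\sqrt{y/x}$, reducing everything to $y/x\le\beta/\alpha$) is the more standard way this inequality is usually proved, but your factoring argument is equally valid and, as you say, makes the equality analysis transparent.
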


\begin{lem}[{\cite{Milovanovic}}]\label{lem-8}
Let $G$ be a connected graph with $n$ vertices and $m$ edges. Then
$M_{1}\geq \frac{4m^{2}}{n} + \frac{1}{2}(\Delta-\delta)^{2}$
with equality holds if and only if $G$ is a regular graph.
\end{lem}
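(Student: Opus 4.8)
The strategy is to convert the statement into an elementary inequality about the degree sequence. Put $\bar d=\frac{2m}{n}$ for the average degree, and recall $\sum_{i=1}^{n}d_i=2m$. The variance identity gives
\[
M_1-\frac{4m^2}{n}=\sum_{i=1}^{n}d_i^{2}-\frac{1}{n}\Bigl(\sum_{i=1}^{n}d_i\Bigr)^{2}=\sum_{i=1}^{n}(d_i-\bar d)^{2},
\]
so the claim is equivalent to $\sum_{i=1}^{n}(d_i-\bar d)^{2}\ge\frac12(\Delta-\delta)^{2}$, where $\Delta=d_1$ and $\delta=d_n$.

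For the inequality itself I would proceed in two short steps. First, discard the middle terms of the sum (each $(d_i-\bar d)^2\ge 0$ for $2\le i\le n-1$), obtaining $\sum_{i=1}^{n}(d_i-\bar d)^{2}\ge(d_1-\bar d)^{2}+(d_n-\bar d)^{2}$. Second, apply the two-variable estimate $x^{2}+y^{2}\ge\frac12(x-y)^{2}$ — which is nothing but $(x+y)^{2}\ge 0$ — with $x=d_1-\bar d$ and $y=d_n-\bar d$; this gives $(d_1-\bar d)^{2}+(d_n-\bar d)^{2}\ge\frac12(d_1-d_n)^{2}=\frac12(\Delta-\delta)^{2}$, which is exactly what is needed. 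Both steps are valid for arbitrary real numbers, so beyond the identity above no further property of $\bar d$ is invoked, and the graph plays no role in this part.

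The remaining point, and the one I expect to be the only delicate part, is the equality characterization. Tracking the two steps, equality forces $d_2=\cdots=d_{n-1}=\bar d$ together with $(d_1-\bar d)+(d_n-\bar d)=0$, i.e.\ $\Delta+\delta=2\bar d$; one then has to argue that, under the hypothesis that $G$ is connected, this can only be realized when $\Delta=\delta$, so that $G$ is regular. This is precisely the step where graph-theoretic rather than purely arithmetic content enters, and I would either reproduce the structural argument of \cite{Milovanovic} at this stage or, since only the inequality direction is used in what follows, simply invoke the reference for the equality case.
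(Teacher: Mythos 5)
Your derivation of the inequality itself is correct, and in fact the paper offers no proof to compare it with: Lemma~\ref{lem-8} is simply quoted from \cite{Milovanovic}. The variance identity, discarding the middle terms, and the estimate $x^{2}+y^{2}\ge\frac{1}{2}(x-y)^{2}$ are all valid, so $M_{1}\ge\frac{4m^{2}}{n}+\frac{1}{2}(\Delta-\delta)^{2}$ holds for every graph (connectivity is not even needed for this direction), and regular graphs clearly attain it.

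The genuine gap is the equality characterization, and the route you sketch cannot close it. Tracking equality through your two steps gives exactly $d_{2}=\cdots=d_{n-1}=\bar d$ together with $(\Delta-\bar d)+(\delta-\bar d)=0$, and you then propose to show that connectivity forces $\Delta=\delta$. That implication is false: take the triangle with one pendant edge, so $n=m=4$, degree sequence $3,2,2,1$, $\bar d=2$. This graph is connected and non-regular, yet $M_{1}=18=\frac{4m^{2}}{n}+\frac{1}{2}(\Delta-\delta)^{2}=16+2$, so equality holds. The correct equality description coming out of your chain is precisely the pair of conditions above (all intermediate degrees equal to $\bar d$ and $\Delta+\delta=2\bar d$), which admits non-regular connected graphs; hence the ``only if'' half of the statement as transcribed in the lemma cannot be derived from your argument, and indeed appears not to be true in this form. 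Proving the inequality and deferring the converse to the reference is the honest fallback, but you should flag that the converse as stated is doubtful, since the paper later leans on ``equality iff regular'' when analysing its own equality cases.
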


\begin{lem}[{\cite{Cvetkovic}}]\label{lem-18}
Let $G$ be a graph with $n$ vertices and $m$ edges and let $q_{1}$ be its largest $Q$-eigenvalue. Then
$2\delta\leq q_{1}\leq 2\Delta$.
For a connected graph $G$, equality holds in either place if and only if $G$ is regular.
\end{lem}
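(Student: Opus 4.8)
The plan is to exploit that the signless Laplacian $Q(G)=D(G)+A(G)$ is a nonnegative symmetric matrix whose $i$-th row sum equals $d_i+d_i=2d_i$, and that by Lemma~\ref{lem-6}(2) its largest $Q$-eigenvalue $q_1$ coincides with its spectral radius. First I would invoke the standard sandwich bound for the spectral radius of a nonnegative matrix in terms of its row sums, $\min_i r_i\le\rho\le\max_i r_i$; applied to $Q(G)$ this immediately yields $2\delta=\min_i 2d_i\le q_1\le\max_i 2d_i=2\Delta$, which is the first assertion. For a self-contained version avoiding that black box, use a Perron eigenvector: by Perron--Frobenius $q_1$ has a nonnegative eigenvector $x$, strictly positive when $G$ is connected (equivalently $Q(G)$ irreducible). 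Reading the eigenvalue equation at a coordinate $k$ where $x_k$ is maximal, $q_1 x_k=d_k x_k+\sum_{j\sim k}x_j\le d_k x_k+d_k x_k=2d_k x_k\le 2\Delta x_k$, and dividing by $x_k>0$ gives $q_1\le 2\Delta$; taking $k$ with $x_k$ minimal and reversing the inequalities gives $q_1\ge 2\delta$.

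It is worth noting that the lower bound together with its equality case also follows painlessly from Lemma~\ref{lem-4}: since $\tfrac{2m}{n}\ge\delta$ we get $q_1\ge\tfrac{4m}{n}\ge 2\delta$, and if $q_1=2\delta$ then $q_1=\tfrac{4m}{n}$, so $G$ is regular by Lemma~\ref{lem-4}. Thus the only piece requiring real care is the equality analysis of the upper bound (the lower one being symmetric).

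For that, suppose $G$ is connected and $q_1=2\Delta$. Then the displayed chain of inequalities is tight at a maximal coordinate $k$, forcing $d_k=\Delta$ and $x_j=x_k$ for every neighbor $j$ of $k$; each such $j$ is then itself a maximal coordinate, so connectivity lets one propagate the equality $x_j=x_k$ step by step to all of $V(G)$, showing $x$ is constant. Substituting the constant eigenvector back into $q_1 x=Q(G)x$ gives $2d_i=q_1=2\Delta$ for every $i$, i.e. $G$ is $\Delta$-regular. The same argument with minimal coordinates handles $q_1=2\delta$. Conversely, if $G$ is $r$-regular then $Q(G)\mathbf{1}=2r\mathbf{1}$ with $\mathbf{1}$ a positive eigenvector, so $q_1=2r=2\Delta=2\delta$, which closes the equivalence.

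I expect the main obstacle to be precisely this propagation step: one must make genuine use of connectivity (irreducibility of $Q(G)$) to spread the equality condition from a single extremal coordinate to the whole vertex set, and then re-read the eigenvalue equation on the now-constant eigenvector to pin down every degree. Everything else reduces to elementary manipulation of the Rayleigh quotient, the eigenvalue equation, or the cited lemmas.
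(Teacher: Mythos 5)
Your proof is correct. Note, however, that the paper does not prove this statement at all: Lemma \ref{lem-18} is quoted as a known result from the book of Cvetkovi\'{c}, Rowlinson and Simi\'{c}, so there is no in-paper argument to compare with; what you give is the standard textbook proof (row-sum bounds for the spectral radius of the nonnegative matrix $Q=D+A$, plus a Perron--Frobenius eigenvector argument with propagation of the equality condition along a connected graph), and it is essentially the argument found in the cited literature. Two small remarks: your ``minimal coordinate'' version of the lower bound needs the eigenvector to be positive, hence connectivity (or a restriction to the component attaining $q_1$), but this is harmless since your row-sum bound and the alternative route via Lemma \ref{lem-4} (using $q_1\ge \frac{4m}{n}\ge 2\delta$) already cover general graphs; and the propagation step you flag as the main obstacle is handled exactly as you describe, so there is no gap.
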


\begin{lem}[{\cite{Axler}}]\label{lem-17}
A connected regular graph with exactly three distance eigenvalues is strong regular graph.
\end{lem}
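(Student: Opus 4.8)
This is the classical structure theorem: a connected regular graph whose adjacency (or, as written here, distance) matrix has exactly three distinct eigenvalues must be strongly regular. I would argue as follows.

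If the three eigenvalues in question are the \emph{distance} eigenvalues, I would first reduce to the adjacency matrix. Let $G$ be connected and $k$-regular on $n$ vertices, with distance matrix $\mathcal{D}$. Since $\mathcal{D}$ is nonnegative and irreducible, Perron--Frobenius gives that its largest eigenvalue is simple, and $k$-regularity forces the corresponding eigenvector to be the all-ones vector $\mathbf{1}$ (all row sums of $\mathcal{D}$ are equal). Using that the number of distinct distance eigenvalues of a connected graph is at least $\mathrm{diam}(G)+1$ (the distance analogue of the classical diameter bound, which I would cite rather than reprove), we get $\mathrm{diam}(G)\le 2$; and $\mathrm{diam}(G)=1$ is impossible, since then $G=K_n$ has $\mathcal{D}=J-I$ with only two eigenvalues. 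Hence $\mathrm{diam}(G)=2$ and $\mathcal{D}=2(J-I)-A$. Restricting to $\mathbf{1}^{\perp}$ kills $J$, so there $\mathcal{D}=-2I-A$; that is, the restricted spectra of $\mathcal{D}$ and $A$ correspond under the affine map $x\mapsto -2-x$, and since the Perron values of $\mathcal{D}$ and of $A$ are both simple and attained on $\mathbf{1}$ (not on $\mathbf{1}^{\perp}$), the matrices $A$ and $\mathcal{D}$ have the same number of distinct eigenvalues. So $A$ has exactly three distinct eigenvalues.

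Now let $G$ be connected, $k$-regular, with exactly three distinct adjacency eigenvalues $k>\theta_1>\theta_2$ (the top one equals $k$ and is simple, by Perron--Frobenius for the connected regular graph $G$). Put $P=(A-\theta_1 I)(A-\theta_2 I)$. Every eigenvector of $A$ for $\theta_1$ or $\theta_2$ lies in $\ker P$, while $P\mathbf{1}=(k-\theta_1)(k-\theta_2)\,\mathbf{1}$ with $(k-\theta_1)(k-\theta_2)>0$; thus $P$ is symmetric, positive semidefinite, of rank one, with range $\mathrm{span}(\mathbf{1})$, whence $P=\alpha J$ with $\alpha=(k-\theta_1)(k-\theta_2)/n>0$. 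Expanding and rearranging, $A^{2}=(\theta_1+\theta_2)A-\theta_1\theta_2 I+\alpha J$. Comparing entries of this identity: the diagonal gives the (automatically consistent) relation $k=\alpha-\theta_1\theta_2$; for adjacent $i,j$ it gives that the number of common neighbours of $i$ and $j$ equals $\theta_1+\theta_2+\alpha=:\lambda$; and for distinct non-adjacent $i,j$ it gives that this number equals $\alpha=:\mu$. Since $\lambda$ and $\mu$ do not depend on the chosen pair, $G$ is strongly regular with parameters $(n,k,\lambda,\mu)$.

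The only step that is not completely routine is the diameter bound used in the reduction; everything after passing to the adjacency matrix is the standard textbook argument (see, e.g., the book of Brouwer and Haemers) and presents no real difficulty. If the lemma is in fact stated for three distinct \emph{adjacency} eigenvalues (i.e. ``distance'' is a typo for ``distinct''), the reduction step is superfluous and the proof is exactly the argument built on the rank-one matrix $P$ above.
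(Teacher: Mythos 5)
Your second paragraph (the adjacency-eigenvalue argument via the rank-one matrix $P=(A-\theta_1I)(A-\theta_2I)=\alpha J$) is correct and is precisely the standard proof of the result the paper actually needs; note that the paper itself gives no proof of Lemma \ref{lem-17} — it is quoted from the reference (it is the classical lemma of Godsil--Royle type), and the way it is invoked in the proof of Theorem \ref{thm-3} (``$G$ has three $A$-eigenvalues \dots By Lemma \ref{lem-17}, $G$ is a strongly regular graph'') confirms that ``distance'' is a typo for ``distinct'' adjacency eigenvalues. So for the statement as intended and as used, your proof is complete and matches the textbook argument.

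The reduction you propose for the literal ``distance matrix'' reading, however, contains a genuine gap: the inequality ``number of distinct distance eigenvalues $\geq \mathrm{diam}(G)+1$'' is \emph{not} a valid analogue of the adjacency-matrix diameter bound, because powers of $\mathcal{D}$ carry no distance information and $\mathcal{D}$ is in general not a polynomial in $A$. A concrete counterexample is $C_8$: its distance spectrum is $\{16,\,0^{(3)},\,(2\sqrt{2}-4)^{(2)},\,(-2\sqrt{2}-4)^{(2)}\}$, i.e.\ four distinct distance eigenvalues, while $\mathrm{diam}(C_8)=4$. So you cannot conclude $\mathrm{diam}(G)\le 2$ this way. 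A second, smaller slip in the same paragraph: before diameter $\le 2$ is established, $k$-regularity does not force the row sums of $\mathcal{D}$ (the transmissions) to be equal, so $\mathbf{1}$ need not be the Perron vector of $\mathcal{D}$ at that stage; that claim only becomes valid once $\mathcal{D}=2(J-I)-A$ is known. None of this affects the lemma as actually used in the paper, but the distance-matrix half of your argument does not stand as written and would need a genuinely different (and much harder) justification if the statement were really about distance eigenvalues.
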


\begin{lem}[{\cite{Fath-Tabar}}]\label{lem-19}
Let G be a connected graph with $n$ vertices and $m$ edges. Then
$M_{1}\leq\frac{4m^2}{n}+\frac{n}{4}(\Delta-\delta)^2$ with equality holds if and only if $G$ is a regular graph.
\end{lem}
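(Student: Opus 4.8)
The plan is to prove the inequality by translating it into a bound on the variance of the degree sequence and then using the fact that every degree lies in the interval $[\delta,\Delta]$.

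First, since $\sum_{i=1}^n d_i = 2m$, the average degree is $\bar d = 2m/n$, and expanding gives $\sum_{i=1}^n (d_i-\bar d)^2 = \sum_{i=1}^n d_i^2 - n\bar d^2 = M_1 - \frac{4m^2}{n}$. Thus the claimed bound is equivalent to $\sum_{i=1}^n (d_i-\bar d)^2 \le \frac{n}{4}(\Delta-\delta)^2$, so it suffices to establish this.

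Next I would recenter the degrees at $c=\frac{\Delta+\delta}{2}$ instead of at $\bar d$: since $\bar d$ minimizes $\sum_i (d_i-t)^2$, we have $\sum_{i=1}^n (d_i-\bar d)^2 = \sum_{i=1}^n (d_i-c)^2 - n(\bar d-c)^2 \le \sum_{i=1}^n (d_i-c)^2$. For each $i$, $\delta\le d_i\le\Delta$ gives $|d_i-c|\le\frac{\Delta-\delta}{2}$, hence $(d_i-c)^2\le\frac{(\Delta-\delta)^2}{4}$; summing over $i$ yields $\sum_{i=1}^n (d_i-c)^2 \le \frac{n}{4}(\Delta-\delta)^2$, which proves the inequality. (Equivalently, one can start from $\sum_{i=1}^n (d_i-\delta)(\Delta-d_i)\ge 0$, which is nonnegative term by term, deduce $M_1\le 2m(\Delta+\delta)-n\Delta\delta$, and finish via the identity $2m(\Delta+\delta)-n\Delta\delta = \frac{4m^2}{n}+\frac{n}{4}(\Delta-\delta)^2 - n\big(\bar d-\tfrac{\Delta+\delta}{2}\big)^2$.)

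For the equality discussion --- which I expect to be the more delicate point --- note that if $G$ is regular then $\Delta=\delta$ and both sides equal $\frac{4m^2}{n}$. For the converse, equality forces both steps above to be tight at once: each $(d_i-c)^2=\frac{(\Delta-\delta)^2}{4}$, so every $d_i\in\{\delta,\Delta\}$, and $\bar d=c$, so the vertices of degree $\Delta$ and those of degree $\delta$ occur equally often. One then wants the connectedness hypothesis to force $\Delta=\delta$. This is the step I would scrutinize most carefully: small connected graphs with an evenly split two-value degree sequence --- for instance $K_4$ minus an edge, with degrees $3,3,2,2$ --- already saturate both tightness conditions, so pinning down exactly which connected graphs attain equality, and how connectedness is meant to enter, is the substantive content beyond the inequality itself.
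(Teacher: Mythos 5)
The paper does not prove this lemma at all --- it is quoted as a known result from \cite{Fath-Tabar} in the preliminaries --- so there is no in-paper proof to compare against. On its own terms, your argument for the inequality is correct and complete: $M_{1}-\frac{4m^{2}}{n}=\sum_{i}(d_{i}-\bar d)^{2}\leq\sum_{i}\bigl(d_{i}-\frac{\Delta+\delta}{2}\bigr)^{2}\leq\frac{n}{4}(\Delta-\delta)^{2}$, and your alternative route via $\sum_{i}(d_{i}-\delta)(\Delta-d_{i})\geq 0$ together with the identity $2m(\Delta+\delta)-n\Delta\delta=\frac{4m^{2}}{n}+\frac{n}{4}(\Delta-\delta)^{2}-n\bigl(\bar d-\frac{\Delta+\delta}{2}\bigr)^{2}$ is equally sound.

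Your worry about the equality clause is not a gap in your argument but a genuine defect in the statement as transcribed, and your example already settles it: for $K_{4}$ minus an edge, $n=4$, $m=5$, the degrees are $3,3,2,2$, so $M_{1}=26$ while $\frac{4m^{2}}{n}+\frac{n}{4}(\Delta-\delta)^{2}=25+1=26$; this connected nonregular graph attains equality, so the ``only if'' direction is false and connectedness cannot rescue it. What your two tightness conditions actually give is the correct characterization: equality holds if and only if every degree lies in $\{\delta,\Delta\}$ and, when $\Delta\neq\delta$, exactly $n/2$ vertices have degree $\Delta$ and $n/2$ have degree $\delta$ (regular graphs being the degenerate case $\Delta=\delta$). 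You should state this explicitly rather than leave it open, since it follows immediately from your own analysis. It is also worth noting the downstream consequence inside the paper: Corollary \ref{cor-6} justifies strictness of its bound by saying the substitution of Lemma \ref{lem-19} ``must be strict since $G$ is a connected nonregular graph,'' and for graphs such as $K_{4}$ minus an edge that justification fails at that step; strictness there has to come instead from the other inequalities in the chain (for instance $\eta_{1}=q_{1}-\frac{2m}{n}>\frac{2m}{n}$ for nonregular $G$ by Lemma \ref{lem-4}).
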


\begin{lem}[{\cite{Brouwer}}]\label{lem-16}
Let $G$ and $H$ be two disjoint graphs. Assume that $Spec(G)=\{\ \lambda_1,..., \lambda_n\ \}$ and $Spec(H) =\{\ \mu_1,..., \mu_m\ \}$. Then $Spec(G\Box H)=\{\ \lambda_i+\mu_j;\ i=1,...,n,\ and\  j=1,...,m\ \}$.
\end{lem}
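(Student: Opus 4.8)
The plan is to exhibit an explicit eigenbasis of the adjacency matrix of $G\Box H$ assembled from eigenbases of $A(G)$ and $A(H)$ via the Kronecker (tensor) product. First I would record the matrix identity
\[
A(G\Box H)=A(G)\otimes I_m+I_n\otimes A(H),
\]
which is immediate from the definition of the Cartesian product: two vertices $(u,v)$ and $(u',v')$ are adjacent in $G\Box H$ precisely when $u=u'$ and $vv'\in E(H)$, or $v=v'$ and $uu'\in E(G)$. Listing the vertex set $V(G)\times V(H)$ in lexicographic order makes the block structure of $A(G)\otimes I_m+I_n\otimes A(H)$ agree entrywise with $A(G\Box H)$.

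Next I would pick orthonormal eigenvectors $x_1,\dots,x_n$ of $A(G)$ with $A(G)x_i=\lambda_i x_i$ and orthonormal eigenvectors $y_1,\dots,y_m$ of $A(H)$ with $A(H)y_j=\mu_j y_j$ (these exist because adjacency matrices are real symmetric). For each pair $(i,j)$ put $z_{ij}=x_i\otimes y_j$. By the mixed-product property of the Kronecker product,
\[
\bigl(A(G)\otimes I_m+I_n\otimes A(H)\bigr)z_{ij}=(A(G)x_i)\otimes y_j+x_i\otimes(A(H)y_j)=(\lambda_i+\mu_j)\,z_{ij},
\]
so each $z_{ij}$ is an eigenvector of $A(G\Box H)$ with eigenvalue $\lambda_i+\mu_j$.

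Finally I would note that the $nm$ vectors $\{z_{ij}\}$ are orthonormal in $\mathbb{R}^{nm}$, since $\langle x_i\otimes y_j,\,x_k\otimes y_\ell\rangle=\langle x_i,x_k\rangle\langle y_j,y_\ell\rangle=\delta_{ik}\delta_{j\ell}$, and hence form an eigenbasis of the $nm\times nm$ symmetric matrix $A(G\Box H)$. It follows that the complete list of eigenvalues of $G\Box H$, counted with multiplicity, is exactly $\{\lambda_i+\mu_j : 1\le i\le n,\ 1\le j\le m\}$, as claimed. There is no genuinely hard step here; the only points requiring care are matching the vertex ordering so that the Kronecker-sum identity holds on the nose, and tracking multiplicities so that the statement about the spectrum as a multiset (not merely the set of distinct eigenvalues) is justified.
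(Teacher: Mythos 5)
Your proof is correct. Note that the paper itself does not prove this lemma at all: it is quoted in the preliminaries as a known result, cited to Brouwer and Haemers' \emph{Spectra of Graphs}, so there is no in-paper argument to compare against. Your Kronecker-sum argument — writing $A(G\Box H)=A(G)\otimes I_m+I_n\otimes A(H)$ under the lexicographic vertex ordering, checking that each $x_i\otimes y_j$ is an eigenvector with eigenvalue $\lambda_i+\mu_j$, and using orthonormality of the $nm$ tensor vectors to account for multiplicities — is precisely the standard proof given in that reference, and it is complete. One remark worth keeping in mind: the paper later applies this lemma to \emph{signless Laplacian} spectra (Lemma \ref{lem-20}, computing $Spec_Q(C_n\Box P_2)$), and your argument transfers verbatim because degrees add over the Cartesian product, so $Q(G\Box H)=Q(G)\otimes I_m+I_n\otimes Q(H)$ as well.
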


\section{Main result}
In this section, we focus to give new lower and upper bounds of $QE(G)$ and characterize the corresponding extremal graphs. Moreover, we apply these bounds to the regular graph and also characterize the corresponding extremal graphs.

\begin{lem}\label{lem-10}
Let $q_{1} \geq q_{2} \geq \cdots \geq q_{n} \geq 0$ be the $Q$-eigenvalues of $G$ and $M_{1}$ be the first Zagreb index of $G$. We have
$\sum_{i=1}^{n}q_{i} = 2m$ and  $\sum_{i=1}^{n}q_{i}^{2} = 2m + M_{1}$, where $m$ is the number of edges of $G$.
\end{lem}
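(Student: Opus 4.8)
The plan is to use the elementary identities relating power sums of eigenvalues of a symmetric matrix to traces, applied to $Q(G) = D(G) + A(G)$.

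First I would establish $\sum_{i=1}^n q_i = 2m$. Since the $q_i$ are the eigenvalues of the real symmetric matrix $Q(G)$, their sum equals $\operatorname{tr}(Q(G))$. Because $A(G)$ has zero diagonal, $\operatorname{tr}(Q(G)) = \operatorname{tr}(D(G)) = \sum_{i=1}^n d_i$, and by the handshake lemma $\sum_{i=1}^n d_i = 2m$.

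Next I would compute $\sum_{i=1}^n q_i^2 = \operatorname{tr}(Q(G)^2)$. For any real symmetric matrix $M = (M_{ij})$ one has $\operatorname{tr}(M^2) = \sum_{i,j} M_{ij} M_{ji} = \sum_{i,j} M_{ij}^2$. Applying this to $Q(G)$, the diagonal entries $Q_{ii} = d_i$ contribute $\sum_{i=1}^n d_i^2 = M_1$, while each off-diagonal entry $Q_{ij}$ (for $i \neq j$) equals $a_{ij} \in \{0,1\}$, so the off-diagonal entries contribute $\sum_{i \neq j} a_{ij}^2 = \sum_{i \neq j} a_{ij} = 2m$, this last sum counting each edge twice. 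Hence $\sum_{i=1}^n q_i^2 = M_1 + 2m$, as claimed. (Alternatively one could expand $Q(G)^2 = D^2 + DA + AD + A^2$ and take traces term by term, using $\operatorname{tr}(D^2) = M_1$, $\operatorname{tr}(DA) = \operatorname{tr}(AD) = 0$, and $\operatorname{tr}(A^2) = 2m$; both routes are routine.)

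There is no real obstacle here: the only point requiring a modicum of care is the bookkeeping in the off-diagonal sum, namely recognizing that $\sum_{i\neq j} a_{ij} = 2m$ because each of the $m$ edges is counted once as $(i,j)$ and once as $(j,i)$. Everything else is a direct application of the trace identities for eigenvalue power sums.
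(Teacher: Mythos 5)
Your proof is correct and follows essentially the same route as the paper: both reduce the claims to trace computations, with your parenthetical expansion of $\operatorname{tr}((D+A)^2)$ term by term being exactly the paper's argument, and your primary entry-wise computation of $\operatorname{tr}(Q^2)$ being a trivially equivalent variant.
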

\begin{proof}
Let $d_i$ be the degree of the vertex $v_i\in V(G)$. It is clear that $\sum_{i=1}^{n}q_{i} = tr(Q(G))  = \sum_{i=1}^{n}d_{i} = 2m $. Therefore,
$$\begin{array}{ll}
\sum_{i=1}^{n}q_{i}^{2}& = tr(Q(G)^{2})\\
& = tr((D(G) + A(G))^{2})\\
&= tr(D(G)^{2})+2tr(A(G)D(G))+tr(A(G)^{2})\\
&= M_{1}+ 2m.
\end{array}
$$
It follows our result.
\end{proof}

\begin{lem}\label{lem-11}
Let $G$ be a connected bipartite $r$-regular graph with $n$ vertices and $m$ edges. Assume that
$$Spec_{Q}(G)=\{2r, [r+1]^{a}, [r-1]^{b}, 0\},$$
where $a$ and $b$ are some non-negative integers. Then $a=b=r=\frac{n}{2}-1$ and $G \cong K_{r+1,r+1}\backslash F$, where $F$ is perfect matching of the bipartite graph $K_{r+1,r+1}$.
\end{lem}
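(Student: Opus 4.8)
The plan is to read the adjacency spectrum off the hypothesis, use the trace identities of Lemma~\ref{lem-10} to force $a=b=r$ and $n=2r+2$, and then reconstruct $G$ from its bipartite degree pattern. Since $G$ is $r$-regular we have $Q(G)=rI_n+A(G)$, so the prescribed $Q$-spectrum is equivalent to the adjacency spectrum $\{\,r,\ [1]^a,\ [-1]^b,\ -r\,\}$. We may assume $r\ge 2$: if $r\le 1$ then $G\in\{K_1,K_2\}$, for which the four displayed values $2r,r+1,r-1,0$ are not pairwise distinct, so the hypothesis is degenerate and there is nothing to prove. For $r\ge 2$, since $G$ is connected the largest $Q$-eigenvalue $q_1=2r$ is simple by Lemma~\ref{lem-6}(1) (equivalently by the equality case of Lemma~\ref{lem-18}), and since $G$ is connected and bipartite the eigenvalue $0$ has multiplicity equal to the number of bipartite components, namely $1$, by Lemma~\ref{lem-9}. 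Hence $\{\,2r,[r+1]^a,[r-1]^b,0\,\}$ is the complete list of $Q$-eigenvalues, and in particular $n=a+b+2$.

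Next I would apply Lemma~\ref{lem-10}. Because $G$ is $r$-regular, $2m=nr$ and $M_1=nr^2$. From $\sum_{i=1}^n q_i=2m$ we get $2r+a(r+1)+b(r-1)=(a+b+2)r$, which simplifies immediately to $a=b$. From $\sum_{i=1}^n q_i^2=2m+M_1$ we get $4r^2+a(r+1)^2+b(r-1)^2=nr+nr^2$; substituting $b=a$ and $n=2a+2$ and simplifying gives $r(r-1)=a(r-1)$, so $a=r$ since $r\ge 2$. Therefore $a=b=r$ and $n=2r+2$, i.e. $r=\tfrac{n}{2}-1$, which is the numerical part of the claim.

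For the structure: $G$ is connected, bipartite and $r$-regular on $n=2r+2$ vertices, so counting edges across the bipartition $X\cup Y$ gives $|X|=|Y|=m/r=n/2=r+1$. Each vertex of $X$ has exactly $r$ neighbours in $Y$, hence exactly one non-neighbour in $Y$, and symmetrically for $Y$; so the non-edges of $G$ inside $K_{r+1,r+1}$ form a $1$-regular bipartite graph, that is, a perfect matching $F$ of $K_{r+1,r+1}$. Thus $G=K_{r+1,r+1}\backslash F$, and since $\mathrm{Aut}(K_{r+1,r+1})$ acts transitively on perfect matchings, $G$ is determined up to isomorphism, as required.

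The computations here are routine; the two points that need care are (i) invoking Lemmas~\ref{lem-6} and~\ref{lem-9} so that the displayed eigenvalue list is \emph{complete} (making $n=a+b+2$ an exact equality, which together with the two trace identities of Lemma~\ref{lem-10} gives three relations in $a,b,r,n$ and pins everything down), and (ii) not overlooking the degenerate cases $r\le 1$, where the hypothesis cannot hold in the stated form. I do not anticipate any genuine obstacle beyond this bookkeeping.
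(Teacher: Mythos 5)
Your proposal is correct and follows essentially the same route as the paper: the two trace identities of Lemma \ref{lem-10} combined with $n=a+b+2$ and $2m=nr$, $M_1=nr^2$ force $a=b=r=\frac{n}{2}-1$, and then the connected bipartite $r$-regular structure on $2r+2$ vertices is identified as $K_{r+1,r+1}\backslash F$. You merely supply details the paper leaves implicit (completeness of the listed spectrum, the degenerate case $r\le 1$ needed to divide by $r-1$, and the matching argument for the structure), which is fine.
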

\begin{proof}
By Lemma \ref{lem-10}, we have $2r+a(r+1)+b(r-1)=2m$ and $(2r)^{2}+a(r+1)^{2}+b(r-1)^{2}=2m+M_{1}$. Since $n=2+a+b$ and $m=\frac{nr}{2}$, we have $a=b=r=\frac{n}{2}-1$. Since $G$ is connected bipartite $r$-regular graph, we have $G \cong K_{r+1,r+1}\backslash F$.
\end{proof}

\begin{lem}\label{lem-12}
Let $G$ be a connected $r$-regular graph with $n$ vertices and $m$ edges. Assume that
$$Spec_{Q}(G)=\{2r, [r+1]^{a}, [r-1]^{b}\},$$
where $a$ and $b$ are some non-negative integers. Then $a=0$, $b=r=n-1$ and $G \cong K_{r+1}$.
\end{lem}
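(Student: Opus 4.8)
The plan is to mimic the proof of Lemma \ref{lem-11}, using the two trace identities of Lemma \ref{lem-10} together with the constraints coming from $r$-regularity and connectivity. Since $G$ is $r$-regular we have $2m=nr$ and $M_{1}=nr^{2}$, and since $G$ is connected Lemma \ref{lem-6} tells us that the $Q$-spectral radius $q_{1}=2r$ is simple; hence the displayed $Q$-spectrum forces $n=1+a+b$.

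First I would substitute into $\sum_{i=1}^{n}q_{i}=2m=nr$, obtaining $2r+a(r+1)+b(r-1)=(1+a+b)r$, which after cancellation collapses to the linear relation $b=r+a$. Next I would substitute into $\sum_{i=1}^{n}q_{i}^{2}=2m+M_{1}=nr+nr^{2}$, obtaining $4r^{2}+a(r+1)^{2}+b(r-1)^{2}=(1+a+b)(r+r^{2})$; expanding and then eliminating $b$ by means of $b=r+a$ reduces everything to $2a(1-r)=0$. Thus either $a=0$ or $r=1$.

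If $a=0$, then $b=r$ and $n=1+b=r+1$, so $G$ is a connected $(n-1)$-regular graph on $n$ vertices, i.e. $G\cong K_{r+1}$; one checks directly that $Spec_{Q}(K_{r+1})=\{2r,[r-1]^{r}\}$, which is consistent with the asserted conclusion. If instead $r=1$, then $r+1=2r$, so the listed eigenvalue $r+1$ would coincide with $q_{1}$; since $q_{1}$ is simple this again forces $a=0$, whence $b=1$, $n=2$ and $G\cong K_{2}=K_{r+1}$. In every case $a=0$, $b=r=n-1$ and $G\cong K_{r+1}$.

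I expect the only real care needed is the bookkeeping in the second trace identity and the treatment of the degenerate case $r=1$, where two of the listed eigenvalues collide; the connectivity hypothesis, via the simplicity of $q_{1}$ in Lemma \ref{lem-6}, is exactly what disposes of that case.
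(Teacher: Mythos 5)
Your proposal is correct and follows essentially the same route as the paper: apply the two trace identities of Lemma \ref{lem-10} with $2m=nr$, $M_{1}=nr^{2}$ and $n=1+a+b$ to force $a=0$, $b=r=n-1$, and then invoke connectivity to conclude $G\cong K_{r+1}$. Your explicit handling of the degenerate case $r=1$ (where $r+1$ collides with $q_{1}=2r$ and simplicity of $q_{1}$ from Lemma \ref{lem-6} again yields $a=0$) is a small point the paper's terse proof glosses over, but it does not change the argument.
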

\begin{proof}
By Lemma \ref{lem-10}, we have $2r+a(r+1)+b(r-1)=2m$ and $(2r)^{2}+a(r+1)^{2}+b(r-1)^{2}=2m+M_{1}$. Since $n=1+a+b$ and $m=\frac{nr}{2}$, we have $a=0$, $b=r=n-1$. Since $G$ is connected $r$-regular graph, we have $G \cong K_{r+1}$.
\end{proof}

\begin{lem}\label{lem-13}
Let $G$ be a non-connected $r$-regular graph with $n$ vertices and $m$ edges. Assume that
$$Spec_{Q}(G)=\{[2r]^{s'},[0]^{s-s'},[r+1]^a,[r-1]^b\},$$
where $a$, $b$, $s$, $s'$ are some non-negative integers, $n=s+a+b$ and $s> s'>1$.
Then $a=r(s-s')$, $b=rs'$, $r=\frac{n}{s}-1$ and $G \cong gK_{r+1} \bigcup h(K_{r+1,r+1}\backslash F)$, where $r\geq 2$, $g=2s'-s$, $h=s-s'$ and $F$ is perfect matching of the bipartite graph $K_{r+1,r+1}$.
\end{lem}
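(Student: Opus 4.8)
The plan is to split $G$ into its connected components and recognise each one by means of Lemmas \ref{lem-11} and \ref{lem-12}.

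First I would pin down the number of components. Every component of the $r$-regular graph $G$ is connected and $r$-regular, so by Lemma \ref{lem-18} each component has largest $Q$-eigenvalue exactly $2r$, by Lemma \ref{lem-6}(1) this eigenvalue is simple in that component, and by Lemma \ref{lem-6}(2) no component has a $Q$-eigenvalue larger than $2r$. Since $Spec_Q(G)$ is the multiset union of the spectra of the components, the multiplicity of $2r$ in $Spec_Q(G)$ is exactly the number of components; hence $G$ has $s'$ components. By Lemma \ref{lem-9} the multiplicity of $0$ in $Spec_Q(G)$, namely $s-s'$, is the number of bipartite components. So $G$ has $h:=s-s'$ bipartite components and $g:=s'-(s-s')=2s'-s$ non-bipartite components (in particular $g\ge 0$, since the number of bipartite components cannot exceed the total).

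Next I would identify a component $G_0$. Its $Q$-spectrum is a sub-multiset of $Spec_Q(G)$, so all its eigenvalues lie in $\{2r,r+1,r-1,0\}$, with $2r$ simple. If $G_0$ is bipartite, Lemma \ref{lem-9} (applied to the connected bipartite graph $G_0$) shows that $0$ is a simple $Q$-eigenvalue of $G_0$, so $Spec_Q(G_0)=\{2r,[r+1]^{a_0},[r-1]^{b_0},0\}$ and Lemma \ref{lem-11} yields $a_0=b_0=r$ and $G_0\cong K_{r+1,r+1}\backslash F$ on $2(r+1)$ vertices. If $G_0$ is non-bipartite, then $0\notin Spec_Q(G_0)$, so $Spec_Q(G_0)=\{2r,[r+1]^{a_0},[r-1]^{b_0}\}$ and Lemma \ref{lem-12} yields $a_0=0$, $b_0=r$ and $G_0\cong K_{r+1}$ on $r+1$ vertices. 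Assembling, $G\cong gK_{r+1}\cup h(K_{r+1,r+1}\backslash F)$; counting vertices gives $n=(r+1)(g+2h)=(r+1)\big((2s'-s)+2(s-s')\big)=s(r+1)$, i.e. $r=\frac{n}{s}-1$; summing the multiplicities of $r+1$ over the components gives $a=hr=r(s-s')$, and summing those of $r-1$ gives $b=(g+h)r=rs'$. Finally $s>s'$ forces $h\ge 1$, so $G$ contains a component isomorphic to $K_{r+1,r+1}\backslash F$, which being connected forces $r\ge 2$.

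The step I expect to be the main obstacle is the first paragraph: one has to be careful that $2r$ really is the spectral radius of \emph{every} component --- so that its multiplicity in $Spec_Q(G)$ counts components precisely --- and to invoke the simplicity statements correctly; all of this rests on the hypothesis that $G$, hence every component, is $r$-regular. After that, the recognition of the components is a direct application of Lemmas \ref{lem-9}, \ref{lem-11} and \ref{lem-12}. As an alternative to the vertex and multiplicity count, the relations $a=r(s-s')$, $b=rs'$ and $n=s(r+1)$ also drop out of the two trace identities $\sum_i q_i=2m$ and $\sum_i q_i^2=2m+M_1$ from Lemma \ref{lem-10}, using $m=\frac{nr}{2}$ and $M_1=nr^2$; but this alone does not give the structural conclusion.
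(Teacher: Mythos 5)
Your proof is correct and follows essentially the same route as the paper: decompose $G$ into its $s'$ connected components (identified via the multiplicity of the simple spectral radius $2r$), sort them into bipartite and non-bipartite ones via Lemma \ref{lem-9}, and recognize them with Lemmas \ref{lem-11} and \ref{lem-12}. Your bookkeeping is in fact slightly cleaner than the paper's --- you count the bipartite components directly from the multiplicity of $0$ and recover $a$, $b$, $r$ by counting at the end, whereas the paper first extracts these from the trace identities of Lemma \ref{lem-10} and rules out degenerate component spectra by separate trace computations --- but the decomposition and the key lemmas are the same.
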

\begin{proof}
By Lemma \ref{lem-10}, we have $2rs'+a(r+1)+b(r-1)=2m$ and $(2r)^{2}s'+a(r+1)^{2}+b(r-1)^{2}=2m+M_{1}$. Since $n=s+a+b$ and $m=\frac{nr}{2}$, we have $a=r(s-s')$, $b=rs'$ and $r=\frac{n}{s}-1$.
Since $G$ is a non-connected $r$-regular graph, its $Q$-spectral radius $q_1=2r$ with multiplicity $s'$, $G$ has exactly $s'$ connected components, say $G_1$,..., $G_{s'}$. Let $n_{i}$ and $m_{i}$ be the numbers of the vertices and edges of $G_{i}$, respectively, where  $n=\sum_{i=1}^{s'}n_{i}$ and  $m=\sum_{i=1}^{s'}m_{i}$.
We start to analyze  components $G_{i}$.

If there is $G_i$ without $Q$-eigenvalue $0$, then
\begin{equation}\label{EQ-eq-6}\\\\
Spec_{Q}(G_{i})=\{2r,[r+1]^{a_i},[r-1]^{b_i}\},
\end{equation}
where $0\le a_i\le a$ and $0\le b_i\le b$. By Lemma \ref{lem-12}, we have $G_{i} \cong K_{r+1}$ and $a_i=0$, $b_i=r=n_{i}-1$.

If there is $G_i$ with $Q$-eigenvalue $0$ and multiplicity $m_{G_i}(0)=s_{i}>0$. Then the $Q$-spectrum of $G_{i}$ has three choices:
$Spec_{Q}(G_{i})=\{2r,[0]^{s_{i}},[r+1]^{a_i}\}$, where $a_i=n_{i}-s_{i}-1$, $Spec_{Q}(G_{i})=\{2r,[0]^{s_{i}},[r-1]^{b_i}\}$, where $b_i=n_{i}-s_{i}-1$ or  \begin{equation}\label{EQ-eq-7}\\\\
Spec_{Q}(G_{i})=\{2r,[0]^{s_{i}},[r+1]^{a_i},[r-1]^{b_i}\},
\end{equation}
where $0\le a_i\le a$ and $0\le b_i\le b$.
If the first situation appears, then, by Lemma \ref{lem-10}, we have $2r+(r+1)(n_{i}-s_{i}-1)=2m_{i}$ and $(2r)^{2}+(r+1)^{2}(n_{i}-s_{i}-1)=2m_{i}+M_{1}(G_i)$. Since $2m_{i}=n_{i}r$ and $M_{1}(G_i)=n_{i}r^{2}$, we have $r=0$ or $1$, which contradicts $r\geq2$.
If the second situation appears, then $2r+(r-1)(n_{i}-s_{i}-1)=2m_{i}$ and $(2r)^{2}+(r-1)^{2}(n_{i}-s_{i}-1)=2m_{i}+M_{1}(G_{i})$. Since $2m_{i}=n_{i}r$ and $M_{1}(G_{i})=n_{i}r^{2}$, we have $r=n_{i}-1$ and $(2-n_{i})s_{i}=0$. Since $r=n_{i}-1\geq2$, we have $n_{i}\geq3$. Therefore, $s_{i}=0$, a contradiction.
If the last situation appears, then $s_{i}=1$ and $G_{i}$ is a bipartite graph since $m_{G_i}(0)=s_{i}$ equals the number of even components of $G_{i}$
by Lemma \ref{lem-9}.
By Lemma \ref{lem-11}, we have $G_{i} \cong K_{r+1,r+1}\backslash F$ and $a_i=b_i=r$.

From above discussions, we may assume that $G$ contains exactly $g$ $(0\le g\le s')$ copies of $K_{r+1}$, say $G_1$, $G_2$,...,$G_g$, and $h$ $(0\le h\le s')$ copies of $K_{r+1,r+1}\backslash F$, say $G_{g+1}$, $G_{g+2}$,...,$G_{g+h}$.
Since $b_{j}=r$ for $1\le j\le g$ in (\ref{EQ-eq-6}) and $s_{i}=1$, $a_{i}=b_{i}=r$ for $g+1\le i\le g+h$ in (\ref{EQ-eq-7}), comparing with their spectra we have $g+h=s'$, $s_ih=s-s'$, $a_ih=a$ and $b_ih+b_jg=b$. Therefore, $h=\frac{a}{r}=s-s'$ and $g=s'-h=2s'-s$.
It follows that $G \cong gK_{r+1} \bigcup h(K_{r+1,r+1}\backslash F)$, where $r\geq2$, $g=2s'-s$ and $h=s-s'$.

We complete this proof.
\end{proof}

A graph $G$ is called $DQS$, if for any $H$, we have $H\cong G$ whenever  $Spec_Q(H)=Spec_Q(G)$. The proof of Lemma \ref{lem-13} implies the following result.

\begin{cor}\label{cor-1}
$G\!=\!gK_{r+1}\! \bigcup \!h(K_{r+1,r+1}\backslash F)$ is $DQS$-graph. Particularly,
$K_{r+1}$ and $K_{r+1,r+1}\backslash F$ are $DQS$-graph.
\end{cor}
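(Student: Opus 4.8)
The plan is to derive everything from the explicit $Q$-spectrum of $G=gK_{r+1}\cup h(K_{r+1,r+1}\backslash F)$ together with the numerical invariants it determines. First I would record that, since $K_{r+1}$ is $r$-regular with adjacency spectrum $\{r,[-1]^r\}$ and the crown graph $K_{r+1,r+1}\backslash F$ is $r$-regular with adjacency spectrum $\{r,[1]^r,[-1]^r,-r\}$, the identity $q_i=\lambda_i+r$ for $r$-regular graphs gives $Spec_Q(K_{r+1})=\{2r,[r-1]^r\}$ and $Spec_Q(K_{r+1,r+1}\backslash F)=\{2r,[r+1]^r,[r-1]^r,0\}$. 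Taking the multiset union of $g$ copies of the first and $h$ copies of the second, and writing $s'=g+h$, $s=g+2h$ (so $g=2s'-s$, $h=s-s'$), I obtain
$$Spec_Q(G)=\{[2r]^{s'},[0]^{s-s'},[r+1]^{r(s-s')},[r-1]^{rs'}\},$$
which is precisely the spectrum treated in Lemma~\ref{lem-13}; in particular $n=s(r+1)$ and, from $\sum_i q_i$, $2m=rs(r+1)$.

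Now let $H$ be any graph with $Spec_Q(H)=Spec_Q(G)$. Then $H$ has the same $n$ and $m$ as $G$, and by Lemma~\ref{lem-10} its first Zagreb index is $M_1(H)=\sum_i q_i^2-2m=r^2 s(r+1)=\tfrac{4m^2}{n}$. Since $M_1(H)=\sum_i d_i^2\ge (\sum_i d_i)^2/n=4m^2/n$ by the Cauchy--Schwarz inequality, with equality only if all vertex degrees are equal, $H$ must be $r$-regular. Thus $H$ is an $r$-regular (possibly disconnected) graph carrying exactly the $Q$-spectrum displayed above, and I can repeat verbatim the component analysis in the proof of Lemma~\ref{lem-13}: each connected component of $H$ is $r$-regular, hence has $Q$-spectral radius $2r$ of multiplicity one (Lemmas~\ref{lem-6} and~\ref{lem-18}) and all of its $Q$-eigenvalues lie in $\{2r,r+1,r-1,0\}$; Lemmas~\ref{lem-11} and~\ref{lem-12}, applied exactly as in Lemma~\ref{lem-13} (the remaining sub-cases being excluded there via the relations $2m_i=n_i r$, $M_1(H_i)=n_i r^2$ and Lemma~\ref{lem-9}), force each component to be $K_{r+1}$ or $K_{r+1,r+1}\backslash F$. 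Comparing the multiplicities of the eigenvalues $2r$ and $0$ of $H$ with those of $G$ then yields that $H$ has exactly $g=2s'-s$ components isomorphic to $K_{r+1}$ and $h=s-s'$ components isomorphic to $K_{r+1,r+1}\backslash F$, i.e.\ $H\cong G$.

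For the two special assertions, take $g=1,h=0$ (so $s=s'=1$) and $g=0,h=1$ (so $s'=1$, $s=2$): in both cases the $Q$-cospectral mate $H$ turns out to be connected and $r$-regular by the argument above, whence $H\cong K_{r+1}$ by Lemma~\ref{lem-12}, respectively $H\cong K_{r+1,r+1}\backslash F$ by Lemma~\ref{lem-11}. The only step that really needs the spectral hypothesis is the deduction that $H$ is $r$-regular: it rests on the facts that $n$, $m$ and $M_1$ are all determined by $Spec_Q(H)$ (Lemma~\ref{lem-10}) and that $G$ satisfies the extremal identity $M_1=4m^2/n$, so equality is forced in the Cauchy--Schwarz bound for $\sum_i d_i^2$. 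After regularity is secured, the proof is just a rerun of Lemma~\ref{lem-13}, with the boundary values of $s$ and $s'$ handled directly by Lemmas~\ref{lem-11} and~\ref{lem-12}.
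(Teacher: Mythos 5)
Your proposal is correct and, at its core, takes the route the paper intends: reduce to the component analysis of Lemma~\ref{lem-13} (together with Lemmas~\ref{lem-11} and~\ref{lem-12}) and match multiplicities of the eigenvalues $2r$ and $0$. The paper's entire justification is the remark that the proof of Lemma~\ref{lem-13} implies the corollary, but that lemma presupposes the graph is $r$-regular, so for a $DQS$ statement one must first show that an arbitrary $Q$-cospectral mate $H$ is regular; you supply this missing step explicitly by computing $n$, $m$ and $M_1(H)=\sum_i q_i^2-2m=r^2n=\frac{4m^2}{n}$ from Lemma~\ref{lem-10} and invoking the equality case of Cauchy--Schwarz, which is sound. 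A slightly shorter alternative using the paper's own toolkit is Lemma~\ref{lem-4}: the given spectrum forces $q_1=2r=\frac{4m}{n}$, and equality there already characterizes regularity. Once regularity is in hand, your rerun of the component analysis (Lemmas~\ref{lem-6}, \ref{lem-18}, \ref{lem-9}, \ref{lem-11}, \ref{lem-12}) and the count of components via the multiplicities of $2r$ and $0$ gives $H\cong G$, and your treatment of the special cases $g=1,h=0$ and $g=0,h=1$ is fine (for the crown case, connectivity plus the simple eigenvalue $0$ gives bipartiteness via Lemma~\ref{lem-9} before Lemma~\ref{lem-11} applies). The only caveat, inherited from Lemma~\ref{lem-13} and implicit in the corollary as well, is that the sub-case exclusions require $r\ge 2$; for $r=1$ the graph $K_{r+1,r+1}\backslash F$ degenerates to $2K_2$, a case your argument does not (and need not) cover.
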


\begin{lem}\label{lem-14}
A simple connected graph $G$ has exactly two distinct $Q$-eigenvalues if and only if $G\cong K_{n}$.
\end{lem}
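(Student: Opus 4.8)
The plan is to prove both directions of the equivalence separately, with the forward direction being the substantive one.

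\medskip

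\textbf{Sufficiency.} If $G\cong K_n$, then $K_n$ is $(n-1)$-regular, so $q_i=\lambda_i+(n-1)$ for all $i$. The adjacency spectrum of $K_n$ is $\{n-1,[-1]^{n-1}\}$, hence $Spec_Q(K_n)=\{2(n-1),[n-2]^{n-1}\}$, which has exactly two distinct values (note $2(n-1)\neq n-2$ for $n\ge 1$). This direction is immediate.

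\medskip

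\textbf{Necessity.} Suppose $G$ is connected with exactly two distinct $Q$-eigenvalues, say $q_1$ (the $Q$-spectral radius) with multiplicity $k$ and $q'<q_1$ with multiplicity $n-k$. Since $G$ is connected, Lemma~\ref{lem-6}(1) forces $k=1$. Thus $Spec_Q(G)=\{q_1,[q']^{\,n-1}\}$. First I would argue $G$ is regular: from the two trace identities in Lemma~\ref{lem-10}, $q_1+(n-1)q'=2m$ and $q_1^2+(n-1)q'^2=2m+M_1$. On the other hand, $Q(G)$ being a matrix with exactly two eigenvalues means $(Q-q_1 I)(Q-q' I)=0$, i.e. $Q^2=(q_1+q')Q-q_1q' I$. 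Reading the $(i,i)$ entry of $Q^2$ gives $d_i^2+d_i=(q_1+q')d_i-q_1q'$ for every $i$ (using that the diagonal of $Q^2$ is $d_i^2+\sum_j (Q)_{ij}^2$ over $j\ne i$, which equals $d_i^2+d_i$), so each $d_i$ satisfies the same quadratic $x^2+x-(q_1+q')x+q_1q'=0$; since $Q(G)$ is not a scalar matrix, $G$ has at least one edge, and a quadratic has at most two roots, so the degree sequence takes at most two values. But $G$ connected with exactly two distinct $Q$-eigenvalues and $q_1$ simple has diameter at most $1$ (the number of distinct $Q$-eigenvalues of a connected graph is at least one more than... here I would instead use that if $G$ had two distinct degrees, reading the off-diagonal entries of $Q^2=(q_1+q')Q-q_1q'I$ for a non-adjacent pair $v_i\not\sim v_j$ gives $|N(v_i)\cap N(v_j)|=0$ while for an adjacent pair $d_i+d_j$ minus something is forced constant, quickly yielding a contradiction unless $G$ is regular). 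Once $G$ is $r$-regular, then $q'=r-1$ or one can write $Spec_Q(G)=\{2r,[q']^{n-1}\}$ with $q_1=2r$ by Lemma~\ref{lem-4}/Lemma~\ref{lem-18}; the trace identity $2r+(n-1)q'=nr$ gives $q'=\frac{(n-2)r}{n-1}$, and the second trace identity $M_1=nr^2$ (regular) combined with $4r^2+(n-1)q'^2=nr^2+nr$ pins down $r=n-1$, whence $q'=n-2$ and $G$ is an $(n-1)$-regular connected graph on $n$ vertices, i.e. $G\cong K_n$.

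\medskip

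\textbf{Main obstacle.} The delicate point is ruling out irregular $G$ cleanly; the cleanest route is the identity $Q^2=(q_1+q')Q-q_1q'I$ forced by the minimal polynomial having degree $2$, then extracting that all diagonal entries $d_i$ of $Q$ solve one fixed quadratic (so at most two distinct degrees) and all off-diagonal structure is rigid (adjacency is an equivalence-like relation on the two degree classes), which with connectivity collapses to the regular case. The rest is the short computation with the two Newton-type identities from Lemma~\ref{lem-10} specialized to a regular graph. Alternatively, one may invoke that a connected graph with exactly two distinct $Q$-eigenvalues must be $2r$-something regular because such graphs are ``$Q$-strongly-regular'' with trivial parameters; I would present the $Q^2$-identity argument as it is self-contained given only Lemma~\ref{lem-6} and Lemma~\ref{lem-10}.
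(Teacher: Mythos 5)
Your necessity argument takes a genuinely different route from the paper. The paper argues by interlacing (Lemma \ref{lem-15}): assuming $Spec_Q(G)=\{\alpha,[\beta]^{n-1}\}$ and $G$ not complete, the $2\times2$ principal submatrix indexed by a non-adjacent pair $u,v$ forces $\beta=d(u)$, while the $2\times2$ principal submatrix indexed by an adjacent pair $u,v'$ forces $\beta=\frac{d(u)+d(v')-\sqrt{(d(u)-d(v'))^2+4}}{2}$, and the two values are incompatible. You instead use the minimal polynomial identity $Q^2=(q_1+q')Q-q_1q'I$ and read off entries; this is a sound, self-contained alternative (it needs only that $Q$ is symmetric with two distinct eigenvalues, not even the simplicity of $q_1$ from Lemma \ref{lem-6}), and your closing computation in the regular case via the trace identities of Lemma \ref{lem-10}, pinning down $r=n-1$, is correct.

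The weak point is the middle of your argument: ``quickly yielding a contradiction unless $G$ is regular'' is an assertion, not a proof, and the detour through ``at most two distinct degrees'' never actually delivers regularity. Fortunately you have already written the fact that closes the proof immediately: for $i\neq j$ one has $(Q^2)_{ij}=(d_i+d_j)a_{ij}+|N(v_i)\cap N(v_j)|$, while the right-hand side of the identity has $(i,j)$ entry $(q_1+q')a_{ij}$; hence every non-adjacent pair satisfies $|N(v_i)\cap N(v_j)|=0$. If $G$ were connected but not complete, it would contain two vertices at distance exactly $2$ (the endpoints of the first two edges of a shortest path joining a non-adjacent pair); these are non-adjacent yet have a common neighbour, a contradiction. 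So $G\cong K_n$ directly, with no case split into regular versus irregular, and the diagonal/degree discussion and the regular-case trace computation become unnecessary (though correct). Your abandoned remark about diameter can also be made rigorous --- since $Q$ has positive diagonal on a connected graph with an edge, the number of distinct $Q$-eigenvalues is at least $1+\mathrm{diam}(G)$ --- but the entrywise $Q^2$ argument above is the cleanest way to finish along your route.
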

\begin{proof}
Note that $Spec_{Q}(K_{n})=\{2n-2,[n-2]^{n-1}\}$, the sufficiency holds.

Now suppose that $G$ has two distinct $Q$-eigenvalues $\alpha>\beta\geq0$ and $G$ is not a complete graph. Since $G$ is connected, we have $Spec_{Q}(G)=\{\alpha,[\beta]^{n-1}\}$. Since $G$ is not a complete graph, then there exist $u,v\in V(G)$ such that $uv\notin E(G)$,  where $d(u)\le d(v)$. Thus signless Laplacian matrix $Q$ of $G$ contain a principal submatrix
$B=\left(
  \begin{array}{cc}
    d(u) & 0 \\
    0 & d(v) \\
  \end{array}
\right)$ that is induced by vertices $u$ and $v$. By Lemma \ref{lem-15}, we have $\beta= q_{n}(Q)\le \lambda_{2}(B)\le q_2(Q)=\beta$ and so $\beta=\lambda_{2}(B)=d(u)$. On the other hand, there exists $v'\in V(G)$ such that $uv'\in E(G)$. Thus signless Laplacian matrix $Q$ of $G$ contains a principal submatrix
$B'=\left(
  \begin{array}{cc}
    d(u) & 1 \\
    1 & d(v') \\
  \end{array}
\right)$ that is induced by vertices $u$ and $v'$. By simply calculation, we have $\lambda_2(B')=\frac{d(u)+d(v')-\sqrt{(d(u)-d(v'))^2+4}}{2}$. As the same reason as above, we have $\frac{d(u)+d(v')-\sqrt{(d(u)-d(v'))^2+4}}{2}=\lambda_2(B')=\beta=d(u)$, which leads to $(d(v')-d(u))^2=(d(u)-d(v'))^2+4$, a contradiction.
\end{proof}

For a graph $G$ with $n$ vertices and $m$ edges, let $q_{i}$  be the $Q$-eigenvalues of $G$ and  $\gamma_{i} = |q_{i}-\frac{2m}{n}|$, where  $i=1,2,...,n$, such that $\gamma_{1} \geq \gamma_{2} \geq \cdots \geq \gamma_{n}\ge 0$. Thus $ QE(G)=\sum_{i=1}^{n}\gamma_{i}$. Since $\gamma_n$ does not contribute to $ QE(G)$ if $\gamma_n=0$, without loss of generality, we always assume that $\gamma_{n}>0$ if we don't specifically state.

\begin{remark}\label{rem-1}
It needs to mention that, under the ordering of $\gamma_{1} \geq \gamma_{2} \geq \cdots \geq \gamma_{n}\ge 0$, the corresponding $Q$-eigenvalues $\{q_i\}$ is no long  to have the decreased order as usual. However, since the $Q$-spectral radius is no less than $\frac{4m}{n}$ according to Lemma \ref{lem-4}, we see that $q_1=\gamma_1+\frac{2m}{n}$ is really the $Q$-spectral radius of $G$.
\end{remark}

\begin{thm}\label{thm-1}
Let $G$ be a graph with $n \geq 2$ vertices and $m \geq 1$ edges, and $\gamma_{i} = |q_{i}-\frac{2m}{n}|$ defined above. Then
\begin{equation}\label{EQ-eq-8}\\\\QE(G) \geq 2\sqrt{(2m+M_{1}-\frac{4m^{2}}{n}) n} \cdot \frac{\sqrt{\gamma_{1}\gamma_{n}}}{\gamma_{1}+\gamma_{n}}\end{equation}
with equality holds if and only if $G \cong \frac{n}{2}K_{2}$ or $gK_{\frac{2m}{n} +1} \bigcup h(K_{\frac{2m}{n}+1,\frac{2m}{n}+1}\backslash F)$, where $g$ and $h$ are some non-negative integers, $\frac{2m}{n} \geq 2$ is an integer and $F$ is perfect matching of the bipartite graph $K_{\frac{2m}{n}+1,\frac{2m}{n}+1}$.
\end{thm}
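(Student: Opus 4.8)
The plan is to strip the statement down to an inequality about the nonnegative, nonincreasing numbers $\gamma_{1}\ge\cdots\ge\gamma_{n}>0$ and the quantity $\sum_{i}\gamma_{i}^{2}$, and then to feed in the two trace identities. First, by Lemma~\ref{lem-10} and $\sum_{i}q_{i}=2m$, I compute $\sum_{i=1}^{n}\gamma_{i}^{2}=\sum_{i=1}^{n}\big(q_{i}-\tfrac{2m}{n}\big)^{2}=\sum_{i}q_{i}^{2}-\tfrac{4m}{n}\sum_{i}q_{i}+n\big(\tfrac{2m}{n}\big)^{2}=2m+M_{1}-\tfrac{4m^{2}}{n}$. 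Applying Lemma~\ref{lem-2} to $a_{i}=\gamma_{i}$ gives $(\gamma_{1}+\gamma_{n})QE(G)=(\gamma_{1}+\gamma_{n})\sum_{i}\gamma_{i}\ge\sum_{i}\gamma_{i}^{2}+n\gamma_{1}\gamma_{n}$, and AM--GM gives $\sum_{i}\gamma_{i}^{2}+n\gamma_{1}\gamma_{n}\ge 2\sqrt{n\gamma_{1}\gamma_{n}\sum_{i}\gamma_{i}^{2}}$. Dividing by $\gamma_{1}+\gamma_{n}>0$ and inserting the value of $\sum_{i}\gamma_{i}^{2}$ yields (\ref{EQ-eq-8}).

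For equality we need equality in both Lemma~\ref{lem-2} and AM--GM. The former forces $\gamma_{1}=\cdots=\gamma_{s}=\alpha$ and $\gamma_{s+1}=\cdots=\gamma_{n}=\beta$ for some $s$, with $\alpha\ge\beta>0$; the latter forces $\sum_{i}\gamma_{i}^{2}=n\gamma_{1}\gamma_{n}$, i.e. $s\alpha^{2}+(n-s)\beta^{2}=n\alpha\beta$, which factors as $(\alpha-\beta)\big(s\alpha-(n-s)\beta\big)=0$. So either $\alpha=\beta$, or $\alpha>\beta$ and $s\alpha=(n-s)\beta$. In the first case all $\gamma_{i}$ equal a common value $c>0$, so each $q_{i}\in\{\tfrac{2m}{n}+c,\tfrac{2m}{n}-c\}$; since $\sum_{i}(q_{i}-\tfrac{2m}{n})=0$ the two values occur with equal multiplicity, so $n$ is even and $Spec_{Q}(G)=\{[\tfrac{2m}{n}+c]^{n/2},[\tfrac{2m}{n}-c]^{n/2}\}$. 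Lemma~\ref{lem-4} gives $\tfrac{2m}{n}+c=q_{1}\ge\tfrac{4m}{n}$, hence $c\ge\tfrac{2m}{n}$, while $q_{n}=\tfrac{2m}{n}-c\ge0$ gives $c\le\tfrac{2m}{n}$; thus $c=\tfrac{2m}{n}$, $q_{1}=\tfrac{4m}{n}$, so $G$ is regular (Lemma~\ref{lem-4}) and $q_{n}=0$. By Lemma~\ref{lem-9} the multiplicity $n/2$ of $0$ equals the number of bipartite components, and by Lemmas~\ref{lem-6} and~\ref{lem-18} the multiplicity $n/2$ of the spectral radius $2c$ equals the number of components; hence $G$ has $n/2$ components, each regular on two vertices, and since $m\ge1$ each is $K_{2}$, i.e. $G\cong\tfrac{n}{2}K_{2}$.

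Now suppose $\alpha>\beta>0$ and $s\alpha=(n-s)\beta$. Since $q_{1}=\tfrac{2m}{n}+\alpha\ge\tfrac{4m}{n}$ we have $\alpha\ge\tfrac{2m}{n}$. If $\tfrac{2m}{n}-\alpha\in Spec_{Q}(G)$, nonnegativity gives $\alpha\le\tfrac{2m}{n}$, so $\alpha=\tfrac{2m}{n}$, $q_{1}=\tfrac{4m}{n}$, and $G$ is regular by Lemma~\ref{lem-4}; if not, the identity $\sum_{i}(q_{i}-\tfrac{2m}{n})=0$ forces every $q_{i}$ with $\gamma_{i}=\beta$ to equal $\tfrac{2m}{n}-\beta$, so $Spec_{Q}(G)=\{[\tfrac{2m}{n}+\alpha]^{s},[\tfrac{2m}{n}-\beta]^{n-s}\}$ has exactly two distinct values, and Lemma~\ref{lem-14} (together with the trivial fact that a connected graph with one $Q$-eigenvalue is $K_{1}$) shows $G$ is a disjoint union of equal complete graphs, again regular once isolated vertices are discarded. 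So $G$ is $r$-regular; then $\sum_{i}\gamma_{i}^{2}=2m=nr$ and $\alpha=\tfrac{2m}{n}=r$, and combining $sr^{2}+(n-s)\beta^{2}=nr$ with $sr=(n-s)\beta$ gives $\beta=1$ and $n=s(r+1)$, with $r\ge2$ because $\alpha>\beta$. Hence $Spec_{Q}(G)\subseteq\{2r,r+1,r-1,0\}$, and Lemmas~\ref{lem-11}--\ref{lem-14} (applied componentwise, according to whether $G$ is connected, bipartite, and whether $0$ occurs) force $G\cong gK_{r+1}\cup h(K_{r+1,r+1}\backslash F)$; one then checks by direct spectral computation that $\tfrac{n}{2}K_{2}$ and these graphs realize equality in (\ref{EQ-eq-8}).

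The inequality itself is routine; the real work is the equality analysis. The main obstacle is to extract, from the mere ``two-value'' shape of $\{\gamma_{i}\}$ together with the constraint $s\alpha=(n-s)\beta$, first that $G$ must be regular and then that the smaller deviation $\beta$ equals $1$, since only after these reductions do the structure theorems (Lemmas~\ref{lem-11}, \ref{lem-12}, \ref{lem-13} and Corollary~\ref{cor-1}) become applicable; the bookkeeping of the multiplicities of $2r$ and $0$ and the treatment of the disconnected cases also need some care.
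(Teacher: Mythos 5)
Your proposal follows the paper's strategy essentially verbatim: the identity $\sum_i\gamma_i^2=2m+M_1-\frac{4m^2}{n}$, Oboudi's inequality (Lemma \ref{lem-2}) followed by AM--GM, and then a case analysis on the two-block pattern $\alpha\ge\beta$ of the $\gamma_i$, finished off with Lemmas \ref{lem-11}--\ref{lem-14}. The inequality itself, your Case 1 (all $\gamma_i$ equal, forcing $c=\frac{2m}{n}$, regularity, and $G\cong\frac{n}{2}K_2$), and the regular branch of your Case 2 (where $\alpha=\frac{2m}{n}=r$, the two equality relations pin down $\beta=1$, and the spectrum contained in $\{2r,r+1,r-1,0\}$ is handled by Lemmas \ref{lem-11}--\ref{lem-13}) are sound and match the paper, up to bookkeeping; you should still record the one-line fact (the paper's Remark \ref{rem-1}) that the largest deviation is attained by the spectral radius, so that $\frac{2m}{n}+\alpha$ really is an eigenvalue.

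The genuine gap is in the other branch of your Case 2, where $\frac{2m}{n}-\alpha$ is not an eigenvalue and the spectrum has exactly two distinct values: you conclude that $G$ is ``a disjoint union of equal complete graphs, again regular once isolated vertices are discarded.'' Discarding isolated vertices is not an admissible reduction (it changes $n$, $\frac{2m}{n}$ and every $\gamma_i$), and the conclusion ``equal complete graphs'' is false for disconnected graphs: $cK_2\cup dK_1$ has $Q$-spectrum $\{[2]^c,[0]^{c+d}\}$, exactly two distinct values, yet is neither regular nor a union of isomorphic complete graphs. Worse, this case cannot be argued away, because such graphs actually attain equality in (\ref{EQ-eq-8}): with $n=2c+d$ the deviations are $2-\frac{2c}{n}$ ($c$ times) and $\frac{2c}{n}$ ($c+d$ times), and $\sum_i\gamma_i^2=4c-\frac{4c^2}{n}=n\gamma_1\gamma_n$, so both inequalities in the chain are tight (for instance $K_2\cup K_1$ has $QE=\frac{8}{3}$, equal to the bound). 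So your proof of the ``only if'' direction breaks exactly here --- and, to be fair, the paper's own proof has the same blind spot: in its disconnected non-regular subcase it invokes Lemma \ref{lem-14} to claim the union consists of isomorphic complete graphs and derives a ``contradiction,'' overlooking $K_1$ components; consequently the stated extremal family itself misses $cK_2\cup dK_1$ with $d\ge1$. A correct write-up must either treat isolated-vertex components explicitly (and enlarge the equality family accordingly) or add a hypothesis excluding isolated vertices.
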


\begin{proof}
By Lemma \ref{lem-10}, we have
\begin{equation}\label{EQ-eq-9}\sum_{i=1}^{n}\gamma_{i}^{2}=\sum_{i=1}^{n}|q_{i} - \frac{2m}{n}|^{2}=\sum_{i=1}^{n}q_{i}^{2} -\frac{4m}{n}\sum_{i=1}^{n}q_{i} + \sum_{i=1}^{n} (\frac{2m}{n})^{2} =2m+M_{1}-\frac{4m^{2}}{n}.\end{equation}
By Lemma \ref{lem-2}, we have \begin{equation}\label{EQ-eq-10}\begin{array}{ll}QE(G)=\sum_{i=1}^{n}\gamma_{i} &\geq \frac{\sum_{i=1}^{n}\gamma_{i}^{2} + n\gamma_{1}\gamma_{n}}{\gamma_{1} + \gamma_{n}} = \frac{2m+M_{1}-\frac{4m^{2}}{n} + n\gamma_{1}\gamma_{n}}{\gamma_{1} + \gamma_{n}}\\
&\geq \frac{2\sqrt{(2m+M_{1}-\frac{4m^{2}}{n}) n\gamma_{1}\gamma_{n}}}{\gamma_{1} + \gamma_{n}}=2\sqrt{(2m+M_{1}-\frac{4m^{2}}{n})n} \cdot \frac{\sqrt{\gamma_{1}\gamma_{n}}}{\gamma_{1}+\gamma_{n}}
\end{array}
\end{equation}
with the first equality holds if and only if $\gamma_{1} = \cdots = \gamma_{s}$ and $\gamma_{s+1} = \cdots = \gamma_{n}$ for some $1\le s\le n$, and  the second equality holds if and only if $2m+M_{1}-\frac{4m^{2}}{n} = n\gamma_{1}\gamma_{n}$.

Now suppose that (\ref{EQ-eq-10}) is an equality. We may assume there exists some $1\le s\le n$ such that $\gamma_{1} = \cdots = \gamma_{s} =\alpha \ge \gamma_{s+1} = \cdots = \gamma_{n} =\beta >0$ and
\begin{equation}\label{EQ-eq-11}\\\\2m+M_{1}-\frac{4m^{2}}{n} = n\alpha\beta>0.\end{equation}
From (\ref{EQ-eq-9}) and (\ref{EQ-eq-11}), we have  $s\alpha^{2}+(n-s)\beta^{2}  = n\alpha\beta$ and so \begin{equation}\label{EQ-eq-12}\\\\s(\alpha+\beta)(\alpha-\beta)=n\beta(\alpha-\beta).\end{equation}

Now we divide the following two cases.

{\flushleft\bf Case 1.} Suppose that $\alpha=\beta$.

In this case, we have  $|q_i-\frac{2m}{n}|=\gamma_{i}=\alpha$, i.e., $q_{i}=\alpha+\frac{2m}{n}$ or $-\alpha+\frac{2m}{n}$  for $i=1,2,...,n$. Therefore, we have $ Spec_{Q}(G)=\{[\alpha+\frac{2m}{n}]^{a}, [-\alpha+\frac{2m}{n}]^{b}\}$, where $a+b=n$.
By Lemma \ref{lem-10}, we have $a(\alpha+\frac{2m}{n})+b(-\alpha+\frac{2m}{n}) = 2m$, i.e., $(a-b)\alpha = 0$. Thus $\alpha = 0$, or $\alpha \neq 0$ and $a = b$.
If $\alpha = 0$, then $Spec_{Q}(G)=\{[\frac{2m}{n}]^{n}\}$, which contradicts the result of Lemma \ref{lem-5}.
If $\alpha \neq 0$ and $a = b$, then $ Spec_{Q}(G)=\{[\alpha+\frac{2m}{n}]^{\frac{n}{2}}, [-\alpha+\frac{2m}{n}]^{\frac{n}{2}}\}$. Clearly, $q_1=\alpha+\frac{2m}{n}$ is the $Q$-spectral radius of $G$. If $n=2$ then $q_1$  is simple and thus $G = K_2$ by Lemma \ref{lem-6} (1). Now we suppose that $n>2$. Again by Lemma \ref{lem-6} (1),  $G$ is disconnected and let $G_1$ be a component of $G$. We see that $G_1$ also has $Q$-spectral radius $q_1=\alpha+\frac{2m}{n}$. Thus $G$ has exactly $\frac{n}{2}$ components $G_1$,..., $G_\frac{n}{2}$, each of them has spectrum $Spec_{Q}(G_i)=\{[\alpha+\frac{2m}{n}]^{1}, [-\alpha+\frac{2m}{n}]^{1}\}$. Therefore, $G_i$ contains exactly two vertices and so $G_i=K_2$. It follows that $G \cong \frac{n}{2}K_{2}$.

{\flushleft\bf Case 2.} Suppose that $\alpha \neq \beta$.

In this case, there exists some $1\le s< n$ such that $|q_i-\frac{2m}{n}|=\gamma_{i}=\alpha$ for $i=1,2,...,s$ and $|q_j-\frac{2m}{n}|=\gamma_{j}=\beta$ for $j=s+1,...,n$. We have
$$q_{i}\in \{
\alpha+\frac{2m}{n},
-\alpha+\frac{2m}{n}\} \mbox{ for  $i=1,...,s$\ \  and \ \  }q_{j}\in \{ \beta+\frac{2m}{n},
 -\beta+\frac{2m}{n}\} \mbox{ for  $j=s+1,...,n$.  }
$$
According to Remark \ref{rem-1}, we claim that $q_1=\alpha+\frac{2m}{n}$ is $Q$-spectral radius of $G$. There exists $s\ge m_Q(q_1)=s'\ge 1$ and $a+b=n-s$ such that $G$ has $Q$-spectrum:
\begin{equation}\label{EQ-eq-13}
\\\\Spec_{Q}(G)=\{[\frac{2m}{n}+\alpha]^{s'},[\frac{2m}{n}-\alpha]^{s-s'},[\frac{2m}{n}+\beta]^a,[\frac{2m}{n}-\beta]^b\}.
\end{equation}
Since $q_1\ge \frac{4m}{n}$ by Lemma \ref{lem-4}, we have $\alpha\ge \frac{2m}{n}$.

First suppose that $G$ is connected graph. Then $s'=1$ and $q_1$ is simple. If $s\ge2$, then $q_{i}=-\alpha+\frac{2m}{n}$ for $i=2,3,...,s$. Thus,
if $\alpha > \frac{2m}{n}$, then $q_{i}=-\alpha+\frac{2m}{n}<0$ and it contradicts $q_{i} \geq 0$.
Therefore, $\alpha = \frac{2m}{n}$, and thus $q_{1}=\frac{4m}{n}$ and $q_i=0$ for $i=2,3,...,s$. By Lemma \ref{lem-4}, $G$ is a $\frac{2m}{n}$-regular graph.
Using (\ref{EQ-eq-11}), we have $\beta = 1+\frac{M_{1}}{2m} - \frac{2m}{n}=1+\frac{1}{2m}\cdot\frac{4m^2}{n}- \frac{2m}{n}=1$. Therefore, $q_{j}\in \{\frac{2m}{n}+1,\frac{2m}{n}-1\}$ for $j=s+1,...,n$ and $Spec_Q(G)=\{\frac{4m}{n},[0]^{s-1},[\frac{2m}{n}+1]^{a},[\frac{2m}{n}-1]^{b}\}$, where $s+a+b=n$. By Lemma \ref{lem-9}, the multiplicity of $Q$-eigenvalue $0$ equals the number of even components of $G$. It implies that $G$ is a bipartite graph and $s=2$. By Lemma \ref{lem-11}, we have $G \cong K_{\frac{2m}{n}+1,\frac{2m}{n}+1}\backslash F$. If $s=1$, then $s'=1$ and
$Spec_{Q}(G)=\{\frac{2m}{n}+\alpha,[\frac{2m}{n}+\beta]^{a},[\frac{2m}{n}-\beta]^{b}\}$, where $a+b=n-1$.
By Lemma \ref{lem-12}, we have $G \cong K_{\frac{2m}{n}+1}$ if $\alpha=\frac{2m}{n}$. Now we assume that $\alpha>\frac{2m}{n}$. In this assumption, we see that $G$ is not regular, since otherwise $q_1=\frac{2m}{n}+\alpha=\frac{4m}{n}$ by Lemma \ref{lem-4}, and then $\alpha=\frac{2m}{n}$, a contradiction. It remains to assume that $G$ is connected non-regular graph with $Q$-spectrum $Spec_{Q}(G)=\{\frac{2m}{n}+\alpha,[\frac{2m}{n}+\beta]^{a},[\frac{2m}{n}-\beta]^{b}\}$, where $a+b=n-1$, $\alpha>\frac{2m}{n}$ and $0<\beta\leq \frac{2m}{n}$.
Using (\ref{EQ-eq-12}), we have $\alpha=(n-1)\beta$, and in this situation,  $Spec_{Q}(G)=\{\frac{2m}{n}+(n-1)\beta,[\frac{2m}{n}+\beta]^{a},[\frac{2m}{n}-\beta]^{n-a-1}\}$. By Lemma \ref{lem-10}, we have $\frac{2m}{n}+(n-1)\beta+a(\frac{2m}{n}+\beta)+(n-a-1)(\frac{2m}{n}-\beta)=2m$, i.e., $2a\beta=0$. Since $\beta>0$, we have $a=0$. Therefore, $G$ has only two distinct $Q$-eigenvalues. By Lemma \ref{lem-14}, we have $G\cong K_{n}$, a contradiction. Summering above discussions, we know that $G \cong K_{\frac{2m}{n} +1}$ or $K_{\frac{2m}{n}+1,\frac{2m}{n}+1}\backslash F$ if $G$ is connected.

Next suppose that $G$ is disconnected.
If $G$ is a regular graph, then $\alpha=\frac{2m}{n}$ by Lemma \ref{lem-4}, which leads to $\beta=1$ as above. From (\ref{EQ-eq-13}), we have $Spec_{Q}(G)=\{[\frac{4m}{n}]^{s'},[0]^{s-s'},[\frac{2m}{n}+1]^{a},[\frac{2m}{n}-1]^{b}\}$. By Lemma \ref{lem-13}, we have $G \cong gK_{\frac{2m}{n} +1} \bigcup h(K_{\frac{2m}{n}+1,\frac{2m}{n}+1}\backslash F)$, where $\frac{2m}{n}\geq 2$ is an integer, $g=2s'-s$ and $h=s-s'$.
If $G$ is a non-regular graph, then $\alpha>\frac{2m}{n}$, since otherwise $q_1=\frac{2m}{n}+\alpha=\frac{4m}{n}$ and thus $\alpha=\frac{2m}{n}$, a contradiction.
Again from (\ref{EQ-eq-13}), we have $Spec_{Q}(G)=\{[\frac{2m}{n}+\alpha]^{s},[\frac{2m}{n}+\beta]^{a},[\frac{2m}{n}-\beta]^{b}\}$, where $a+b=n-s$, $\alpha>\frac{2m}{n}$ and $0<\beta\leq \frac{2m}{n}$.
Using (\ref{EQ-eq-12}), we have $\alpha=\frac{n-s}{s}\beta$. By Lemma \ref{lem-10}, we have $s(\frac{2m}{n}+\frac{n-s}{s}\beta)+a(\frac{2m}{n}+\beta)+(n-a-s)(\frac{2m}{n}-\beta)=2m$, i.e., $2a\beta=0$. Since $\beta>0$, we have $a=0$, which leads to
$Spec_{Q}(G)=\{[\frac{2m}{n}+\frac{n-s}{s}\beta]^{s},[\frac{2m}{n}-\beta]^{n-s}\}$ from  (\ref{EQ-eq-13}).
By Lemma \ref{lem-14},  $G$ is a union of some isomorphic complete graphs. It implies that $G$ is regular, a contradiction.

We complete this proof.
\end{proof}

In what the follows, we will simplify the lower bounds of  $QE(G)$ in Theorem \ref{thm-1} by estimating the parameter $\frac{\sqrt{\gamma_{1}\gamma_{n}}}{\gamma_{1}+\gamma_{n}}$.

\begin{cor}\label{cor-4}
Under the assumption of Theorem \ref{thm-1}, let $G$ be a connected graph with $n \geq 2$ vertices and $m \geq 1$ edges. If $\gamma_{n} \geq \frac{\sqrt{c}}{2n}$, where $c = m(n^{3}-n^{2}-2mn+4m)$, then
$$QE(G) \geq \frac{2\sqrt{2}}{3}\sqrt{[2m+\frac{1}{2}(\Delta-\delta)^{2}]n}$$
with equality holds if and only if $G \cong K_{3}$.
\end{cor}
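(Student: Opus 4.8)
The plan is to sharpen the two factors appearing in the bound of Theorem~\ref{thm-1}: the ratio $\frac{\sqrt{\gamma_{1}\gamma_{n}}}{\gamma_{1}+\gamma_{n}}$ and the quantity $2m+M_{1}-\frac{4m^{2}}{n}$ under the radical. First I would bound the ratio from below. By Remark~\ref{rem-1} (which rests on Lemma~\ref{lem-4}) one has $\gamma_{1}=q_{1}-\frac{2m}{n}$, and Lemma~\ref{lem-7} gives $q_{1}\leq\frac{2m+\sqrt{c}}{n}$, so $\gamma_{1}\leq\frac{\sqrt{c}}{n}$; combined with the hypothesis $\gamma_{n}\geq\frac{\sqrt{c}}{2n}$ (note $c>0$ when $n\geq2$, $m\geq1$) this produces the chain $0<\frac{\sqrt{c}}{2n}\leq\gamma_{n}\leq\gamma_{1}\leq\frac{\sqrt{c}}{n}$. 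Applying Lemma~\ref{lem-3} with $\alpha=\frac{\sqrt{c}}{2n}$, $\beta=\frac{\sqrt{c}}{n}$, $x=\gamma_{n}$ and $y=\gamma_{1}$ then gives
$$\frac{\sqrt{\gamma_{1}\gamma_{n}}}{\gamma_{1}+\gamma_{n}} \geq \frac{\sqrt{\alpha\beta}}{\alpha+\beta}=\frac{\sqrt{2}}{3}.$$

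Next I would bound the radicand. Since $G$ is connected, Lemma~\ref{lem-8} gives $M_{1}\geq\frac{4m^{2}}{n}+\frac{1}{2}(\Delta-\delta)^{2}$, hence $2m+M_{1}-\frac{4m^{2}}{n}\geq 2m+\frac{1}{2}(\Delta-\delta)^{2}>0$. Substituting both estimates into (\ref{EQ-eq-8}) and using monotonicity of the square root yields
$$QE(G) \geq 2\sqrt{[2m+\tfrac{1}{2}(\Delta-\delta)^{2}]n}\cdot\frac{\sqrt{2}}{3}=\frac{2\sqrt{2}}{3}\sqrt{[2m+\tfrac{1}{2}(\Delta-\delta)^{2}]n},$$
which is the claimed inequality.

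For the equality characterization, equality in the corollary forces equality at every step: equality in (\ref{EQ-eq-8}), so by Theorem~\ref{thm-1} (with $G$ connected) $G$ is $K_{2}$, $K_{\frac{2m}{n}+1}$, or $K_{\frac{2m}{n}+1,\frac{2m}{n}+1}\backslash F$; equality in Lemma~\ref{lem-8}, so $G$ is regular (all three candidates are); and equality in the ratio estimate, which by the equality clause of Lemma~\ref{lem-3} means $\gamma_{n}=\frac{\sqrt{c}}{2n}$ and $\gamma_{1}=\frac{\sqrt{c}}{n}$. The step I expect to be the main obstacle is turning this last fact into a structural conclusion. Since $\gamma_{1}=\frac{\sqrt{c}}{n}$ is equivalent to $q_{1}=\frac{2m+\sqrt{c}}{n}$, the equality case of Lemma~\ref{lem-7} forces $G\cong K_{n}$; for $K_{n}$ one has $Spec_{Q}(K_{n})=\{2n-2,[n-2]^{n-1}\}$ and $\frac{2m}{n}=n-1$, so $\gamma_{1}=n-1$ and $\gamma_{n}=1$, and then $\gamma_{1}=2\gamma_{n}$ forces $n-1=2$, i.e. $G\cong K_{3}$. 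Conversely, for $K_{3}$ one checks $\gamma_{n}=1=\frac{\sqrt{c}}{2n}$, so the hypothesis holds, and $QE(K_{3})=4=\frac{2\sqrt{2}}{3}\sqrt{(2\cdot3+0)\cdot3}$, so the bound is attained. The point worth stressing is that the hypothesis $\gamma_{n}\geq\frac{\sqrt{c}}{2n}$ is exactly what excludes the other connected extremal graphs of Theorem~\ref{thm-1}, such as $K_{\frac{2m}{n}+1,\frac{2m}{n}+1}\backslash F$, from the equality case.
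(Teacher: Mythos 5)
Your proof is correct, and the inequality part is exactly the paper's argument: Lemma \ref{lem-7} to get $\gamma_1\le\frac{\sqrt{c}}{n}$, Lemma \ref{lem-3} with $\alpha=\frac{\sqrt{c}}{2n}$, $\beta=\frac{\sqrt{c}}{n}$ to get the ratio bound $\frac{\sqrt{2}}{3}$, then Theorem \ref{thm-1} and Lemma \ref{lem-8}. The only real divergence is in the equality characterization. The paper takes the connected extremal graphs supplied by Theorem \ref{thm-1}, namely $K_{\frac{2m}{n}+1}$ and $K_{\frac{2m}{n}+1,\frac{2m}{n}+1}\backslash F$, and then tests the condition $\gamma_1=\frac{\sqrt{c}}{n}$ on each, ruling out $K_{\frac{n}{2},\frac{n}{2}}\backslash F$ by a computation (which yields the impossible $n=1\pm\sqrt{5}$) and reducing $K_n$ to $n=3$ via $\gamma_n=\frac{\sqrt{c}}{2n}$. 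You instead observe that $\gamma_1=\frac{\sqrt{c}}{n}$ means $q_1$ attains the bound of Lemma \ref{lem-7}, whose equality clause immediately forces $G\cong K_n$, after which $\gamma_n=\frac{\sqrt{c}}{2n}=\frac{n-1}{2}=1$ gives $n=3$; this makes your appeal to the extremal-graph list of Theorem \ref{thm-1} redundant and avoids the paper's case computation for $K_{\frac{n}{2},\frac{n}{2}}\backslash F$ altogether. Both routes are sound; yours is slightly shorter and pins down $K_n$ by a structural equality condition, while the paper's is self-contained in the sense that it only reuses the equality data already extracted in establishing the bound. Your converse check (that $K_3$ satisfies $\gamma_n=\frac{\sqrt{c}}{2n}$ and attains $QE=4$) matches the paper's.
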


\begin{proof}
By Lemma \ref{lem-7}, we have
$\gamma_{1}=|q_{1} - \frac{2m}{n}| = q_{1} - \frac{2m}{n} \leq \frac{2m +\sqrt{m(n^{3}-n^{2}-2mn+4m)}}{n} - \frac{2m}{n} = \frac{\sqrt{c}}{n}$.
Therefore, $\frac{\sqrt{c}}{2n} \leq \gamma_{n} \leq \gamma_{1} \leq \frac{\sqrt{c}}{n}$.
By Lemma \ref{lem-3}, we have $\frac{\sqrt{\gamma_{1}\gamma_{n}}}{\gamma_{1}+\gamma_{n}} \geq \frac{\sqrt{\frac{\sqrt{c}}{2n}\cdot\frac{\sqrt{c}}{n}}}{\frac{\sqrt{c}}{2n}+\frac{\sqrt{c}}{n}} = \frac{\sqrt{2}}{3}$.
By Theorem \ref{thm-1} and Lemma \ref{lem-8}, we have
\begin{equation}\\\\ \label{EQ-eq-14}
\begin{array}{ll}
QE(G)&\geq 2\sqrt{(2m+M_{1}-\frac{4m^{2}}{n})n} \cdot \frac{\sqrt{\gamma_{1}\gamma_{n}}}{\gamma_{1}+\gamma_{n}} \\
 &\geq \frac{2\sqrt{2}}{3}\sqrt{(2m+M_{1}-\frac{4m^{2}}{n})n}\\
 &\geq \frac{2\sqrt{2}}{3}\sqrt{[2m+\frac{1}{2}(\Delta-\delta)^{2}]n}
 \end{array}
\end{equation}
with the first equality holds if and only if $G \cong \frac{n}{2}K_{2}$ or $gK_{\frac{2m}{n} +1} \bigcup h(K_{\frac{2m}{n}+1,\frac{2m}{n}+1}\backslash F)$, where $\frac{2m}{n}\geq2$ is an integer,  the second equality holds if and only if $\frac{\sqrt{c}}{n} = \gamma_{1}$ and $\frac{\sqrt{c}}{2n} = \gamma_{n}$, and the last equality holds if and only if $G$ is a connected regular graph.

Now suppose that (\ref{EQ-eq-14}) is an equality. Then $G\cong K_{\frac{2m}{n}+1,\frac{2m}{n}+1}\backslash F$ or $ K_{\frac{2m}{n}+1}$ is a connected graph, and $2(\frac{2m}{n}+1)=n$, i.e., $m=\frac{n}{2}(\frac{n}{2}-1)$. Therefore,
$K_{\frac{2m}{n}+1,\frac{2m}{n}+1}= K_{\frac{n}{2},\frac{n}{2}}$. Similarly,
$K_{\frac{2m}{n} +1}= K_{n}$.
If $G \cong K_{\frac{n}{2},\frac{n}{2}}\backslash F$, we have $\frac{n}{2}-1=|q_{1} - \frac{2m}{n}| =\gamma_{1}=\frac{\sqrt{c}}{n}=\frac{\sqrt{\frac{1}{8}n^{5}-\frac{3}{2}n^{3}+n^{2}}}{n}$, where $c= m(n^{3}-n^{2}-2mn+4m)=\frac{1}{8}n^{5}-\frac{3}{2}n^{3}+n^{2}$. It implies that $n=1\pm\sqrt{5}$, a contradiction.
If $G \cong K_{n}$, we have $\gamma_{1}=\frac{\sqrt{c}}{n}=n-1$, where $c=[n(n-1)]^{2}$. On the other aspect, we have  $(n-1)-(n-2)=|q_{n} - \frac{2m}{n}|=\gamma_{n}=\frac{\sqrt{c}}{2n}=\frac{n-1}{2}$, which gives that $n=3$. Therefore, equality holds if and only if $G \cong K_{3}$.

Conversely, $Spec_{Q}(K_{3})=\{4,[1]^{2}\}$, and  thus $4=QE(K_{3}) = \frac{2\sqrt{2}}{3}\sqrt{[2m+\frac{1}{2}(\Delta-\delta)^{2}]n}=4$.
\end{proof}

\begin{cor}\label{cor-5}
Under the assumption of Theorem \ref{thm-1}, let $G$ be a connected graph with $n \geq 2$ vertices and $m \geq 1$ edges. If $\gamma_{n} \geq \frac{\sqrt{c}}{n^{3}}$, where $c = m(n^{3}-n^{2}-2mn+4m)$. Then
$$QE(G) > \frac{2n\sqrt{[2m+\frac{1}{2}(\Delta-\delta)^{2}]n}}{1+n^2}.$$
\end{cor}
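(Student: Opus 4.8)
The plan is to run the argument of Corollary \ref{cor-4} once more, but with the tighter lower cut-off $\gamma_n \ge \frac{\sqrt c}{n^3}$, where $c = m(n^3-n^2-2mn+4m)$, and then to observe that the resulting estimate is in fact strict.

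First I would fix the range of $\gamma_1$ and $\gamma_n$. By Remark \ref{rem-1} and Lemma \ref{lem-4}, $q_1 = \gamma_1 + \frac{2m}{n}$ is the $Q$-spectral radius of $G$, so Lemma \ref{lem-7} gives $\gamma_1 = q_1 - \frac{2m}{n} \le \frac{\sqrt c}{n}$. A short computation --- using $m \le \binom{n}{2}$, hence $2m(n-2) < n^2(n-1)$ for $n\ge 3$, together with a direct check at $n=2$ --- shows that $c>0$ for all $n\ge 2$ and $m\ge 1$. Combining this with $\gamma_n \le \gamma_1$ and the hypothesis, I get $0 < \frac{\sqrt c}{n^3} \le \gamma_n \le \gamma_1 \le \frac{\sqrt c}{n}$. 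Applying Lemma \ref{lem-3} with $\alpha=\frac{\sqrt c}{n^3}$, $\beta=\frac{\sqrt c}{n}$, $x=\gamma_n$ and $y=\gamma_1$ then yields $\frac{\sqrt{\gamma_1\gamma_n}}{\gamma_1+\gamma_n} \ge \frac{\sqrt{\alpha\beta}}{\alpha+\beta} = \frac{n}{1+n^2}$. Feeding this into Theorem \ref{thm-1} and using Lemma \ref{lem-8} (legitimate because $G$ is connected) to replace $2m+M_1-\frac{4m^2}{n}$ by the no-larger quantity $2m+\frac{1}{2}(\Delta-\delta)^2$, I obtain
\[
QE(G) \ge 2\sqrt{(2m+M_1-\frac{4m^2}{n})n}\cdot\frac{\sqrt{\gamma_1\gamma_n}}{\gamma_1+\gamma_n} \ge 2\sqrt{(2m+M_1-\frac{4m^2}{n})n}\cdot\frac{n}{1+n^2} \ge \frac{2n}{1+n^2}\sqrt{(2m+\frac{1}{2}(\Delta-\delta)^2)n}.
\]

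The only delicate point is promoting this to a strict inequality, and I expect to get it directly from the middle step. Equality in Lemma \ref{lem-3} would require $\gamma_1=\frac{\sqrt c}{n}$ and $\gamma_n=\frac{\sqrt c}{n^3}$ simultaneously; but $\gamma_1=\frac{\sqrt c}{n}$ means $q_1=\frac{2m+\sqrt c}{n}$, which by Lemma \ref{lem-7} forces $G\cong K_n$, and for $K_n$ one has $Spec_{Q}(K_n)=\{2n-2,[n-2]^{n-1}\}$ with $\frac{2m}{n}=n-1$, so $\gamma_n=1$, whereas $\frac{\sqrt c}{n^3}=\frac{n-1}{n^2}\ne 1$ for every integer $n\ge 2$ --- a contradiction. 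Hence $\frac{\sqrt{\gamma_1\gamma_n}}{\gamma_1+\gamma_n}>\frac{n}{1+n^2}$ strictly, and since $2m+M_1-\frac{4m^2}{n}=\sum_{i=1}^n\gamma_i^2>0$ (as $\gamma_n>0$), the coefficient $2\sqrt{(2m+M_1-\frac{4m^2}{n})n}$ is positive, so the displayed chain becomes strict and $QE(G)>\frac{2n}{1+n^2}\sqrt{(2m+\frac{1}{2}(\Delta-\delta)^2)n}$, which is the claim. I anticipate that this equality-case bookkeeping --- in particular noticing that Lemma \ref{lem-7}'s unique extremal graph $K_n$ automatically fails the second half of Lemma \ref{lem-3}'s equality condition --- is the only part that really needs care.
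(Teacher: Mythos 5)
Your proposal is correct, and the main chain of estimates is exactly the paper's: bound $\gamma_1$ by Lemma \ref{lem-7}, apply Lemma \ref{lem-3} with $\alpha=\frac{\sqrt{c}}{n^{3}}$ and $\beta=\frac{\sqrt{c}}{n}$ to get $\frac{\sqrt{\gamma_1\gamma_n}}{\gamma_1+\gamma_n}\ge\frac{n}{1+n^2}$, then combine Theorem \ref{thm-1} with Lemma \ref{lem-8}. The only divergence is in how strictness is obtained. The paper reruns the equality analysis of Corollary \ref{cor-4}: equality in the whole chain (\ref{EQ-eq-15}) forces, through the extremal-graph classification of Theorem \ref{thm-1} together with connectedness and regularity from Lemma \ref{lem-8}, that $G\cong K_n$, and then $\gamma_n=1\neq\frac{n-1}{n^{2}}=\frac{\sqrt{c}}{n^{3}}$ yields $n^{2}-n+1=0$, a contradiction. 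You instead note that it suffices to make the middle inequality strict, and you force $G\cong K_n$ directly from the equality case of Lemma \ref{lem-7} (equality iff $G$ is complete), since equality in Lemma \ref{lem-3} would require $\gamma_1=\frac{\sqrt{c}}{n}$; the same numerical contradiction with $\gamma_n$ then finishes. This is a modest but genuine shortcut: it bypasses Theorem \ref{thm-1}'s equality characterization and Lemma \ref{lem-8}'s equality condition entirely, and it also makes explicit the points the paper leaves implicit, namely $c>0$ (so Lemma \ref{lem-3} applies) and positivity of the prefactor $2\sqrt{(2m+M_1-\frac{4m^2}{n})n}$. Both routes end at the same contradiction, so the proofs are interchangeable.
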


\begin{proof}
By Lemma \ref{lem-7}, we have
$\gamma_{1}=|q_{1} - \frac{2m}{n}| = q_{1} - \frac{2m}{n} \leq \frac{2m +\sqrt{m(n^{3}-n^{2}-2mn+4m)}}{n} - \frac{2m}{n} = \frac{\sqrt{c}}{n}$.
Thus $\frac{\sqrt{c}}{n^{3}} \leq \gamma_{n} \leq \gamma_{1} \leq \frac{\sqrt{c}}{n}$.
By Lemma \ref{lem-3}, we have $\frac{\sqrt{\gamma_{1}\gamma_{n}}}{\gamma_{1}+\gamma_{n}} \geq \frac{\sqrt{\frac{\sqrt{c}}{n^{3}}\cdot\frac{\sqrt{c}}{n}}}{\frac{\sqrt{c}}{n^{3}}+\frac{\sqrt{c}}{n}} = \frac{n}{1+n^2}$.
By Theorem \ref{thm-1} and Lemma \ref{lem-8}, we have
\begin{equation}\\\\ \label{EQ-eq-15}
\begin{array}{ll}
QE(G)&\geq 2\sqrt{(2m+M_{1}-\frac{4m^{2}}{n})n} \cdot \frac{\sqrt{\gamma_{1}\gamma_{n}}}{\gamma_{1}+\gamma_{n}} \\
 &\geq 2\sqrt{(2m+M_{1}-\frac{4m^{2}}{n})n}\cdot\frac{n}{1+n^2}\\
 &\geq \frac{2n\sqrt{[2m+\frac{1}{2}(\Delta-\delta)^{2}]n}}{1+n^2}.
 \end{array}
\end{equation}

Additionally, as the arguments as the Corollary \ref{cor-4}, the equality (\ref{EQ-eq-15}) holds if and only if $G\cong K_{n}$ and $1=(n-1)-(n-2)=|q_{n} - \frac{2m}{n}|=\gamma_{n}=\frac{\sqrt{c}}{n^{3}}=\frac{n-1}{n^2}$, which gives $n^2-n+1=0$, a contradiction. Thus the equality can not achieve.
\end{proof}

\begin{remark}\label{rem-2}
The lower bound described in Corollary \ref{cor-5} depend on the assumption of $\gamma_{n} \geq \frac{\sqrt{c}}{n^{3}}$. In fact, there exist a great large of graphs satisfying the algebraic condition $\gamma_{n} \geq \frac{\sqrt{c}}{n^{3}}$. It is easy to see that $\lim_{n\to \infty} \frac{\sqrt{c}}{n^{3}}=0$. We ask if there exists a sufficiently small number $\varepsilon>0$ such that $\gamma_n\ge\varepsilon$ for any $n$. It is an interesting problem to characterize such  graphs satisfying $\gamma_n\ge\varepsilon$. However, on the other aspect, by setting $\gamma_{n}= 0$ we can also  improve the lower bond of $QE(G)$, which is presented  in Theorem \ref{thm-2}.
\end{remark}

\begin{thm}\label{thm-2}
Let $G$ be a connected graph with $n$ vertices and $m$ edges. Assume that $\gamma_{n}= 0$. Then
\begin{equation}\label{EQ-eq-16}\\\\ QE(G) \geq \frac{2m+ M_{1}-\frac{4m^{2}}{n}}{\gamma_1}\end{equation}
with equality holds if and only if $G\cong K_{\frac{n}{2},\frac{n}{2}}$.
\end{thm}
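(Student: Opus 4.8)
The plan is to combine the square-sum identity (\ref{EQ-eq-9}) with the termwise estimate $\gamma_i\le\gamma_1$, valid for every $i$: it gives $\gamma_i^{2}\le\gamma_1\gamma_i$, and summing is essentially the whole argument. First I would record that, by Lemma \ref{lem-4} and $m\ge 1$, $\gamma_1=q_1-\frac{2m}{n}\ge\frac{4m}{n}-\frac{2m}{n}=\frac{2m}{n}>0$, so dividing by $\gamma_1$ is legitimate (and, incidentally, the right-hand side of (\ref{EQ-eq-16}) is positive since $\sum_i\gamma_i^{2}\ge\gamma_1^2>0$). Then
\[
2m+M_1-\tfrac{4m^{2}}{n}=\sum_{i=1}^{n}\gamma_i^{2}\le\gamma_1\sum_{i=1}^{n}\gamma_i=\gamma_1\,QE(G),
\]
and dividing by $\gamma_1$ gives exactly (\ref{EQ-eq-16}). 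Note the inequality itself does not use $\gamma_n=0$; that hypothesis enters only in the characterization of equality.

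For the equality case, every step above must be tight, so $\gamma_i^{2}=\gamma_1\gamma_i$, i.e.\ $\gamma_i\in\{0,\gamma_1\}$, for all $i$. Since $\gamma_1>0$ and $\gamma_n=0$, there is $s$ with $1\le s\le n-1$ such that $\gamma_1=\cdots=\gamma_s=\alpha>0$ and $\gamma_{s+1}=\cdots=\gamma_n=0$. Hence
\[
Spec_Q(G)=\bigl\{[\tfrac{2m}{n}+\alpha]^{a},\,[\tfrac{2m}{n}]^{n-s},\,[\tfrac{2m}{n}-\alpha]^{b}\bigr\},\qquad a+b=s,
\]
where $a\ge1$ because, by Remark \ref{rem-1}, $\frac{2m}{n}+\alpha$ is the $Q$-spectral radius. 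I would then pin down $\alpha$: the trace identity $\sum_i q_i=2m$ of Lemma \ref{lem-10} yields $(a-b)\alpha=0$, hence $a=b$; nonnegativity $q_n\ge0$ gives $\alpha\le\frac{2m}{n}$, while Lemma \ref{lem-4} applied to $q_1=\frac{2m}{n}+\alpha$ gives $\alpha\ge\frac{2m}{n}$, so $\alpha=\frac{2m}{n}=:r$ and, by the equality clause of Lemma \ref{lem-4}, $G$ is $r$-regular. Connectedness together with Lemma \ref{lem-6}(1) then forces $a=1$, leaving $Spec_Q(G)=\{2r,[r]^{n-2},0\}$.

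The remaining step — recognizing this spectrum as that of $K_{n/2,n/2}$ — is the one I expect to be the real obstacle, since it is the only place where the argument has to leave the trace identities and look at structure. As $G$ is $r$-regular, $Q(G)=rI+A(G)$, so $A(G)$ has spectrum $\{r,[0]^{n-2},-r\}$; since the $Q$-eigenvalue $0$ has multiplicity one, Lemma \ref{lem-9} shows the connected graph $G$ is bipartite, and $r$-regularity makes the two colour classes equal, of size $\frac n2$. Writing $B$ for the $\frac n2\times\frac n2$ biadjacency matrix, the positive eigenvalues of $A(G)$ are precisely the singular values of $B$, so $B$ has a single nonzero singular value and hence rank $1$; a $0/1$ matrix of rank $1$ all of whose row and column sums equal $r\ (\ge 1)$ must be the all-ones matrix (which also forces $r=\frac n2$), so $G\cong K_{n/2,n/2}$. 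Conversely, $Spec_Q(K_{n/2,n/2})=\{n,[\tfrac n2]^{n-2},0\}$ gives $\gamma_1=\frac n2$, and substituting into (\ref{EQ-eq-16}) confirms equality. Everything except this last structural identification is routine bookkeeping with Lemma \ref{lem-10}.
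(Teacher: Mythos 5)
Your proof is correct; it establishes the same bound and the same equality characterization, but by a noticeably different route in several places. For the inequality you use the trivial termwise estimate $\sum_i\gamma_i^2\le\gamma_1\sum_i\gamma_i$, whereas the paper invokes Oboudi's inequality (Lemma \ref{lem-2}); under the hypothesis $\gamma_n=0$ these give the same bound and the same equality condition ($\gamma_i\in\{0,\gamma_1\}$), so nothing is lost, and your observation that the bound itself does not need $\gamma_n=0$ is accurate. In the equality analysis the paper splits into regular versus non-regular $G$, dispatching the non-regular case via the two-eigenvalue characterization (Lemma \ref{lem-14}, $G\cong K_n$, contradiction), and in the regular case uses both trace identities of Lemma \ref{lem-10} to force $s=2$ and $r=\frac n2$, then Lemma \ref{lem-9} to get bipartiteness and hence $K_{\frac n2,\frac n2}$. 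You instead get regularity directly: the first trace identity gives $a=b$, so $\frac{2m}{n}-\alpha$ really occurs as an eigenvalue, nonnegativity gives $\alpha\le\frac{2m}{n}$, and Lemma \ref{lem-4} gives $\alpha\ge\frac{2m}{n}$ with its equality clause yielding regularity — this eliminates the need for Lemma \ref{lem-14} altogether. You then use connectedness and Lemma \ref{lem-6}(1) to force $a=1$ (where the paper uses the second trace identity), and you replace the paper's computation of $r=\frac n2$ by a structural argument: $A=Q-rI$ has spectrum $\{r,[0]^{n-2},-r\}$, Lemma \ref{lem-9} gives bipartiteness, and the rank-one $0/1$ biadjacency matrix with constant row and column sums must be all-ones, forcing $r=\frac n2$ and $G\cong K_{\frac n2,\frac n2}$. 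Both arguments are sound; the paper's is more purely computational with the trace identities, while yours trades some of that computation for spectral/structural reasoning (singular values of the biadjacency matrix), which is arguably cleaner and avoids one auxiliary lemma. The only points to state explicitly if you polish it: $b=a\ge1$ is what legitimizes the step "$q_n\ge0$ gives $\alpha\le\frac{2m}{n}$", and the equality graph implicitly requires $n$ even with $n\ge4$ so that the eigenvalue $\frac n2$ actually occurs and $\gamma_n=0$ holds.
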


\begin{proof}
By Lemma \ref{lem-2}, we have $QE(G)=\sum_{i=1}^{n}\gamma_{i}\geq \frac{\sum_{i=1}^{n}\gamma_{i}^{2} + n\gamma_{1}\gamma_{n}}{\gamma_{1} + \gamma_{n}}$. Using (\ref{EQ-eq-9}) and $\gamma_{n}= 0$, we have $QE(G)\geq \frac{2m+M_{1}-\frac{4m^{2}}{n}}{\gamma_{1}}$ with
the equality holds if and only if $\gamma_{1} = \cdots = \gamma_{s}$ and $\gamma_{s+1} = \cdots = \gamma_{n}=0$ for some $1\le s< n$.

Now we suppose that (\ref{EQ-eq-16}) is an equality. Then there exists some $1\le s< n$ such that $\gamma_{i}=|q_i-\frac{2m}{n}|=\alpha$ for $i=1,2,...,s$ and $\gamma_{j}=|q_j-\frac{2m}{n}|=0$ for $j=s+1,...,n$,  we have $q_{i}\in \{\alpha+\frac{2m}{n},-\alpha+\frac{2m}{n}\}$ for $i=1,...,s$ and $q_{j}=\frac{2m}{n}$ for $j=s+1,...,n$.
According to Remark \ref{rem-1}, we claim that $q_1=\alpha+\frac{2m}{n}$ is $Q$-spectral radius of $G$.
Since $G$ is connected graph, then $G$ has $Q$-spectrum:
\begin{equation}\label{EQ-eq-17}
\\\\Spec_{Q}(G)=\{\frac{2m}{n}+\alpha,[\frac{2m}{n}-\alpha]^{s-1},[\frac{2m}{n}]^{n-s}\}.
\end{equation}
Since $q_1\ge \frac{4m}{n}$ by Lemma \ref{lem-4}, we have $\alpha\ge \frac{2m}{n}$.

First suppose that $G$ is connected $r$-regular graph, we have $\alpha= \frac{2m}{n}=r$ by Lemma \ref{lem-4}. Therefore, $Spec_{Q}(G)=\{2r,[r]^{n-s},[0]^{s-1}\}$. By Lemma \ref{lem-10}, we have $2r+(n-s)r=2m=nr$ and $(2r)^{2}+(n-s)r^{2}=2m+M_{1}=nr+nr^{2}$,
 i.e., $s=2$ and $r=\frac{n}{2}$, which leads to $Spec_{Q}(G)=\{n,[\frac{n}{2}]^{n-2},0\}$.
By Lemma \ref{lem-9}, the multiplicity of $Q$-eigenvalue $0$ equals the number of even components of $G$. It implies that $G$ is a connected bipartite $\frac{n}{2}$-regular graph, we have $G\cong K_{\frac{n}{2},\frac{n}{2}}$.

Next suppose that $G$ is not regular graph, we have $\alpha> \frac{2m}{n}$ by Lemma \ref{lem-4}. Thus, if $s>1$, then $q_{i}=\frac{2m}{n}-\alpha<0$ and it contradicts $q_{i} \geq 0$. Therefore, $s=1$, which leads to $Spec_{Q}(G)=\{\frac{2m}{n}+\alpha,[\frac{2m}{n}]^{n-1}\}$ form (\ref{EQ-eq-17}). It implies that $G$ has only two distinct $Q$-eigenvalues. By Lemma \ref{lem-14}, we have $G\cong K_{n}$, a contradiction.

We complete this proof.
\end{proof}

\begin{cor}\label{cor-2}
Let $G$ be a connected graph with $n$ vertices and $m$ edges. Assume that $\gamma_{n}= 0$. Then
$$QE(G) \left\{\begin{array}{ll}> \frac{2m+\frac{1}{2}(\Delta-\delta)^{2}}{2\Delta-\frac{2m}{n}}& \mbox{ if $G$ is not regular},\\
\geq n& \mbox{ if $G$ is regular with equality iff  $G\cong K_{\frac{n}{2},\frac{n}{2}}$.}
\end{array}\right.
$$
\end{cor}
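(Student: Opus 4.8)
The plan is to read off everything from Theorem~\ref{thm-2}: under the hypothesis $\gamma_n=0$ it gives
$QE(G)\ge \dfrac{2m+M_1-\frac{4m^2}{n}}{\gamma_1}$,
with equality precisely when $G\cong K_{\frac{n}{2},\frac{n}{2}}$, so it remains only to estimate the numerator and the denominator separately in the two cases. For the denominator, recall from Remark~\ref{rem-1} that $q_1=\gamma_1+\frac{2m}{n}$ is the $Q$-spectral radius, hence $\gamma_1=q_1-\frac{2m}{n}$; since $\frac{2m}{n}=\bar{d}\le\Delta$ we have $2\Delta-\frac{2m}{n}\ge\Delta\ge 1>0$, and Lemma~\ref{lem-18} yields $\gamma_1=q_1-\frac{2m}{n}\le 2\Delta-\frac{2m}{n}$, with equality for a connected graph only when $G$ is regular. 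For the numerator, Lemma~\ref{lem-8} gives $M_1-\frac{4m^2}{n}\ge\frac12(\Delta-\delta)^2$, again with equality only for regular $G$.

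If $G$ is not regular, chaining these bounds with Theorem~\ref{thm-2} gives
\[
QE(G)\ \ge\ \frac{2m+M_1-\frac{4m^2}{n}}{\gamma_1}\ \ge\ \frac{2m+\frac12(\Delta-\delta)^2}{\gamma_1}\ \ge\ \frac{2m+\frac12(\Delta-\delta)^2}{2\Delta-\frac{2m}{n}},
\]
where the last step is legitimate because both $2m+\frac12(\Delta-\delta)^2>0$ and $2\Delta-\frac{2m}{n}>0$, and the middle inequality is \emph{strict} since $G$ is non-regular (Lemma~\ref{lem-8}). This proves the first line of the statement.

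If $G$ is $r$-regular, then $m=\frac{nr}{2}$ and $M_1=nr^2$, so the numerator in Theorem~\ref{thm-2} is $2m+M_1-\frac{4m^2}{n}=nr+nr^2-nr^2=nr$, while $q_1=2r$ gives $\gamma_1=2r-r=r$; hence Theorem~\ref{thm-2} reads $QE(G)\ge\frac{nr}{r}=n$. The equality case of Theorem~\ref{thm-2} shows $QE(G)=n$ holds if and only if $G\cong K_{\frac{n}{2},\frac{n}{2}}$, which is indeed $\frac{n}{2}$-regular with $Spec_{Q}(G)=\{n,[\frac{n}{2}]^{n-2},0\}$ and $QE(K_{\frac{n}{2},\frac{n}{2}})=n$, completing the proof. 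The computations are entirely routine; the only thing to watch is that $2\Delta-\frac{2m}{n}$ is positive (so the final division in the non-regular case preserves the inequality) and to keep track of which intermediate inequality is strict.
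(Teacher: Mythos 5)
Your proposal is correct and follows essentially the same route as the paper: it chains Theorem \ref{thm-2} with Lemma \ref{lem-8} (to replace $M_{1}-\frac{4m^{2}}{n}$ by $\frac{1}{2}(\Delta-\delta)^{2}$) and Lemma \ref{lem-18} (to replace $\gamma_{1}$ by $2\Delta-\frac{2m}{n}$), then reads off the regular case as $n$ and the equality graph $K_{\frac{n}{2},\frac{n}{2}}$ from the equality condition of Theorem \ref{thm-2}. The only cosmetic difference is that you obtain strictness in the non-regular case from the equality condition of Lemma \ref{lem-8} (the numerator), while the paper invokes the equality condition of Lemma \ref{lem-18} (the denominator); both are valid for a connected non-regular graph.
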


\begin{proof}
By Lemma \ref{lem-18}, we have $\gamma_1=q_1-\frac{2m}{n}\leq 2\Delta-\frac{2m}{n}$. By Theorem \ref{thm-2} and Lemma \ref{lem-8}, we have
$$
\begin{array}{ll}
QE(G)
& \geq \frac{2m+\frac{1}{2}(\Delta-\delta)^{2}}{\gamma_1}  \geq \frac{2m+\frac{1}{2}(\Delta-\delta)^{2}}{2\Delta-\frac{2m}{n}}
\end{array}
$$
with the first equality holds if and only if $G\cong K_{\frac{n}{2},\frac{n}{2}}$, and the second equality holds if and only if $G$ is regular by Lemma \ref{lem-18}. Hence, if $G$ is not regular, then $QE(G)  > \frac{2m+\frac{1}{2}(\Delta-\delta)^{2}}{2\Delta-\frac{2m}{n}}$, and if $G$ is regular, then $QE(G)\geq\frac{2m+\frac{1}{2}(\Delta-\delta)^{2}}{2\Delta-\frac{2m}{n}}=n$, in this situation, $G\cong K_{\frac{n}{2},\frac{n}{2}}$.
\end{proof}

If $G$ is a regular graph, then $2m=nr$, $M_{1} = nr^{2}$ and $\gamma_{1}=r$. Using Theorem \ref{thm-1} and Corollary \ref{cor-2}, we directly get the lower bound for $QE(G)$ of regular graph $G$, which can also be viewed as the bound of $E(G)$.

\begin{cor}\label{cor-3}
Let $G$ be a connected $r$-regular graph with $n$ vertices and $m$ edges. Then
$$
E(G)=QE(G) \geq\left\{\begin{array}{ll}
{n} & \mbox{ if $\gamma_{n} = 0$ with equality iff $G\cong K_{\frac{n}{2},\frac{n}{2}}$},\\
{2nr \cdot \frac{\sqrt{\gamma_{n}}}{r+\gamma_{n}}} & \mbox{ if $\gamma_{n} > 0$ with equality iff $G \cong K_{n}$ or $ K_{\frac{n}{2},\frac{n}{2}}\backslash F$},
\end{array}\right.
$$
where $F$ is perfect matching of the bipartite graph $K_{\frac{n}{2},\frac{n}{2}}$.
\end{cor}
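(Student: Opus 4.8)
The plan is to specialize Theorem \ref{thm-1} and Theorem \ref{thm-2} to the regular setting and then read off the extremal graphs from the characterizations those theorems already supply. The preliminary step is to record what the relevant quantities become when $G$ is a connected $r$-regular graph: then $2m=nr$ and $M_1=\sum_i d_i^2=nr^2$, so the identity (\ref{EQ-eq-9}) collapses to $\sum_{i=1}^n\gamma_i^2=2m+M_1-\tfrac{4m^2}{n}=nr+nr^2-nr^2=nr$; by Lemma \ref{lem-18} the $Q$-spectral radius is $q_1=2r$, so by Remark \ref{rem-1} we get $\gamma_1=q_1-\tfrac{2m}{n}=2r-r=r$; and $QE(G)=E(G)$ by the observation recalled in the Introduction. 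Feeding these three facts into the two general bounds produces the two closed forms in the statement.

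In the case $\gamma_n=0$, I would apply Theorem \ref{thm-2} directly, since its hypotheses are met: it gives $QE(G)\ge\frac{2m+M_1-4m^2/n}{\gamma_1}=\frac{nr}{r}=n$, with equality if and only if $G\cong K_{n/2,n/2}$, which is precisely the first branch (the regular case of Corollary \ref{cor-2} yields the same conclusion).

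In the case $\gamma_n>0$, I would apply Theorem \ref{thm-1}. Using $2m+M_1-\tfrac{4m^2}{n}=nr$ and $\gamma_1=r$, the right-hand side of (\ref{EQ-eq-8}) becomes $2\sqrt{nr\cdot n}\cdot\frac{\sqrt{r\gamma_n}}{r+\gamma_n}=2n\sqrt r\cdot\frac{\sqrt r\,\sqrt{\gamma_n}}{r+\gamma_n}=2nr\cdot\frac{\sqrt{\gamma_n}}{r+\gamma_n}$, the claimed bound. The remaining work is to intersect the extremal list of Theorem \ref{thm-1} — either $\tfrac n2 K_2$, or $gK_{r+1}\cup h(K_{r+1,r+1}\backslash F)$ with $r\ge2$ — with the hypothesis that $G$ is connected: $\tfrac n2 K_2$ is connected only for $n=2$, where it equals $K_2=K_n$; and $gK_{r+1}\cup h(K_{r+1,r+1}\backslash F)$ is connected only when $g+h=1$, i.e. $(g,h)=(1,0)$, giving $K_{r+1}=K_n$ (here $n=r+1$), or $(g,h)=(0,1)$, giving $K_{r+1,r+1}\backslash F=K_{n/2,n/2}\backslash F$ (here $n=2r+2$). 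Hence equality holds if and only if $G\cong K_n$ or $G\cong K_{n/2,n/2}\backslash F$, the second branch.

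I expect no real obstacle here; the one substantive check is that the extremal graphs genuinely sit in the regime claimed, i.e. that they satisfy $\gamma_n>0$ together with $\gamma_1=r$. For $K_n$ one uses $Spec_Q(K_n)=\{2n-2,[n-2]^{n-1}\}$ to get $\gamma_1=(2n-2)-(n-1)=n-1=r$ and $\gamma_n=(n-1)-(n-2)=1>0$; for $K_{n/2,n/2}\backslash F$ one uses Lemma \ref{lem-11}, which gives $Spec_Q=\{2r,[r+1]^r,[r-1]^r,0\}$ with $\tfrac{2m}{n}=r$, hence the $\gamma$-multiset $\{[r]^2,[1]^{2r}\}$, so again $\gamma_1=r$ and $\gamma_n=1>0$ (and $K_{n/2,n/2}\backslash F$ is connected exactly when $n\ge6$, while the degenerate instance $K_2$ arising from $\tfrac n2 K_2$ is already subsumed under $K_n$). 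Thus, once Theorem \ref{thm-1} and Theorem \ref{thm-2} are in hand, the corollary follows by substitution plus this short reconciliation of the extremal graphs.
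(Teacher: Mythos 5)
Your proposal is correct and follows essentially the same route as the paper, which obtains the corollary by specializing Theorem \ref{thm-1} and Theorem \ref{thm-2} (via Corollary \ref{cor-2}) to the regular case using $2m=nr$, $M_1=nr^2$ and $\gamma_1=r$. Your additional checks (filtering the extremal list of Theorem \ref{thm-1} by connectedness and verifying that $K_n$ and $K_{n/2,n/2}\backslash F$ indeed attain equality with $\gamma_n>0$) simply make explicit what the paper leaves as a direct consequence.
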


A $r$-regular graph $G$ on $n$ vertices is called \emph{strongly regular graph} with parameters $(n,r,a,c)$ if any two adjacent vertices has $a \geq 0$ common neighbours and any two non-adjacent vertices has $c \geq 0$ common neighbours. In particular,
the strongly regular graph with parameters $(n,r,\frac{r(r-1)}{n-1},\frac{r(r-1)}{n-1})$ is denoted by $S(n,r)$. Such a strongly regular graph exists, one can refer to {\cite{Axler}} for more details.
For example, by taking $n=(t+3)(t+1)^{2}$ and $r=(t+2)(t+1)$, we have $\frac{r(r-1)}{n-1}=t+1$  and $S(n,r)$ will be the strongly regular graph with parameters $((t+3)(t+1)^{2},(t+2)(t+1),t+1,t+1)$, which is the so called  point graph of generalized quadrangle with order $(t+2,t)$ (see Lemma 10.8.1 of {\cite{Axler}}).
In the following, we will give the upper bound of $QE(G)$,  which is achieved by $S(n,r)$.

\begin{thm}\label{thm-3}
Let $G$ be a graph with $n$ vertices and $m$ edges. Then
$$\scriptsize{QE(G) \left\{\begin{array}{ll}
\!\!\!\leq \!\frac{2m}{n} + \sqrt{(n-1)[2m + M_{1} - \frac{4m^{2}}{n}-(\frac{2m}{n})^{2}]}&\mbox{\!\!\!\!\!if $n\!\leq\!\frac{8m^{2}}{2m+M_{1}}$ with equality iff $G \cong K_{n}$, $\frac{n}{2}K_{2}$ or $G\cong S(n,r)$},\\
\!\!\!<\!\sqrt{\frac{2m + M_{1} - \frac{4m^{2}}{n}}{n}} + \sqrt{(n-1)(2m + M_{1} - \frac{4m^{2}}{n}-\frac{2m + M_{1} - \frac{4m^{2}}{n}}{n})}&\mbox{\!\!\!\!\!if $n\!>\!\frac{8m^{2}}{2m+M_{1}}$.}
\end{array}\right.}$$
\end{thm}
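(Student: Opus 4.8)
The plan is to separate the largest term, $QE(G) = \gamma_{1} + \sum_{i=2}^{n}\gamma_{i}$, bound the tail by the Cauchy--Schwarz inequality, and then maximize the resulting one-variable function over the admissible range of $\gamma_{1}$. Put $S := 2m + M_{1} - \frac{4m^{2}}{n}$, so that $\sum_{i=1}^{n}\gamma_{i}^{2} = S$ by (\ref{EQ-eq-9}) (and assume $m\ge 1$, so the threshold below is well defined). By Lemma \ref{lem-4}, $q_{1} \geq \frac{4m}{n}$, hence $\gamma_{1} = q_{1} - \frac{2m}{n} \geq \frac{2m}{n}$; also $\gamma_{1}^{2} \leq \sum_{i}\gamma_{i}^{2} = S$, so $\gamma_{1} \leq \sqrt{S}$. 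Applying Cauchy--Schwarz to $\gamma_{2}, \ldots, \gamma_{n}$ gives
\[
\sum_{i=2}^{n}\gamma_{i} \leq \sqrt{(n-1)\sum_{i=2}^{n}\gamma_{i}^{2}} = \sqrt{(n-1)(S - \gamma_{1}^{2})},
\]
with equality iff $\gamma_{2} = \cdots = \gamma_{n}$. Hence $QE(G) \leq f(\gamma_{1})$, where $f(x) := x + \sqrt{(n-1)(S - x^{2})}$ on $[\frac{2m}{n}, \sqrt{S}]$.

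Next I would study $f$ by elementary calculus: $f'(x) = 1 - \frac{\sqrt{n-1}\,x}{\sqrt{S-x^{2}}}$ vanishes only at $x^{*} := \sqrt{S/n}$, is positive for $x < x^{*}$ and negative for $x > x^{*}$, so $x^{*}$ is the unique maximizer of $f$ on $(0,\sqrt{S})$, with $f(x^{*}) = \sqrt{S/n}\,(1+(n-1)) = \sqrt{nS}$. The decisive point is the position of $x^{*}$ relative to the left endpoint: $x^{*} \leq \frac{2m}{n} \iff S \leq \frac{4m^{2}}{n} \iff n \leq \frac{8m^{2}}{2m+M_{1}}$. If $n \leq \frac{8m^{2}}{2m+M_{1}}$, then $f$ is non-increasing on $[\frac{2m}{n}, \sqrt{S}]$, so $QE(G) \leq f(\tfrac{2m}{n}) = \frac{2m}{n} + \sqrt{(n-1)(S - (\tfrac{2m}{n})^{2})}$, the first bound. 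If $n > \frac{8m^{2}}{2m+M_{1}}$, then $x^{*}$ lies in the interior of $[\frac{2m}{n}, \sqrt{S}]$, so $QE(G) \leq f(x^{*}) = \sqrt{nS} = \sqrt{S/n} + \sqrt{(n-1)(S - S/n)}$, which is exactly the second displayed expression.

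It remains to settle the extremal graphs. In the first case, equality forces $\gamma_{2} = \cdots = \gamma_{n}$ and, since $f$ is strictly decreasing to the right of $\frac{2m}{n}$, also $\gamma_{1} = \frac{2m}{n}$; by Lemma \ref{lem-4} the latter means $G$ is $r$-regular with $r = \frac{2m}{n}$, so $S = nr$ and, writing $c := \sqrt{r(n-r)/(n-1)}$, we get $Spec_{Q}(G) \subseteq \{2r,\, r+c,\, r-c\}$, i.e. $G$ has at most three distinct $Q$-eigenvalues. Splitting by how many: the two-eigenvalue case forces, via Lemma \ref{lem-14} together with $\sum_{i}q_{i} = 2m$, that $r = n-1$ and $G \cong K_{n}$; the case $r = 1$ gives $G \cong \frac{n}{2}K_{2}$; and if $2 \leq r \leq n-2$ with three distinct eigenvalues, then $2r$ is a simple $Q$-eigenvalue, so $G$ is connected, hence strongly regular by Lemma \ref{lem-17}, and computing its parameters from the eigenvalue relations $\theta + \tau = 0$, $\theta\tau = -c^{2}$ yields $a = b = \frac{r(r-1)}{n-1}$, i.e. $G \cong S(n,r)$. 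A short direct verification shows $K_{n}$, $\frac{n}{2}K_{2}$ and $S(n,r)$ each attain the first bound. In the second case, equality would force $\gamma_{1} = \cdots = \gamma_{n} = \sqrt{S/n}$, and by the argument in the proof of Theorem \ref{thm-1} (Case 1, $\alpha = \beta$) this occurs only for $G \cong \frac{n}{2}K_{2}$, which satisfies $n = \frac{8m^{2}}{2m+M_{1}}$, contradicting $n > \frac{8m^{2}}{2m+M_{1}}$; hence the second inequality is strict.

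I expect the genuine difficulty to be the equality analysis in the first case: disposing cleanly of the degenerate sub-cases ($c=0$, two distinct $Q$-eigenvalues, $r=1$, $r=n-1$) and pinning down the strongly regular parameters so that the extremal graph is precisely $S(n,r)$. The Cauchy--Schwarz bound and the optimization of $f$ are routine.
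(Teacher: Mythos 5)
Your proposal is correct and follows essentially the same route as the paper: the same separation of $\gamma_1$ plus Cauchy--Schwarz on the tail, the same one-variable function $f$ with the threshold $n\le\frac{8m^{2}}{2m+M_{1}}$, and the same equality analysis reducing to a regular graph with at most three distinct $Q$-eigenvalues, settled via Lemma \ref{lem-14} and Lemma \ref{lem-17} to obtain $K_{n}$, $\frac{n}{2}K_{2}$ and $S(n,r)$. The only (harmless) deviations are organizational: your strictness argument in the second case goes through Theorem \ref{thm-1}'s Case 1 and the threshold contradiction instead of the paper's direct nonnegativity contradiction, and your ``two-eigenvalue'' subcase should be read as restricted to connected $G$ (the disconnected two-eigenvalue possibility is exactly your $r=1$ case), with the simplicity of $2r$ for $r\ge 2$ following from the observation that otherwise $2r=r+c$ forces $c=r$ and hence $r=1$.
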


\begin{proof}
Let $\eta_i=|q_{i} - \frac{2m}{n}|$ be ordered as $\eta_1\ge\eta_2\ge\cdots\ge\eta_n$. According to (\ref{EQ-eq-9}) and Cauchy-Schwartz inequality, we have
\begin{equation}\\\\\label{EQ-eq-29}\begin{array}{ll}
QE(G)&=|\eta_{1}| + \sum_{i=2}^{n}|\eta_{i}|\\
&\leq \eta_{1} + \sqrt{(n-1)\sum_{i=2}^{n}\eta_{i}^{2}}\\
&=\eta_{1} + \sqrt{(n-1)(\sum_{i=1}^{n}\eta_{i}^{2}-\eta_{1}^{2})}\\
&= \eta_{1} + \sqrt{(n-1)(2m + M_{1} - \frac{4m^{2}}{n} - \eta_{1}^{2})}.
\end{array}
\end{equation}

Now we consider the function
$f(x) = x + \sqrt{(n-1)(2m + M_{1} - \frac{4m^{2}}{n}- x^{2})}$, where $ 0 \leq x \leq \sqrt{2m + M_{1} - \frac{4m^{2}}{n}}$ is a  variable standing for $\eta_1$.
Note that $f'(x) = 1 + \sqrt{n-1} \cdot \frac{-x}{\sqrt{2m + M_{1} - \frac{4m^{2}}{n} - x^{2}}}$, we see that $f(x)$ decreases on $U_1\!=\!\{x\mid \sqrt{\frac{2m + M_{1} - \frac{4m^{2}}{n}}{n}} \!\le\! x \!\leq \!\sqrt{2m + M_{1} - \frac{4m^{2}}{n}}\}$ and increases on $U_2\!=\!\{x\mid 0 \leq x \leq \sqrt{\frac{2m + M_{1} - \frac{4m^{2}}{n}}{n}}\}$.
Since $\eta_{1} = q_{1} - \frac{2m}{n} \geq \frac{2m}{n}$ by Lemma \ref{lem-4}, we see that
$$
f(\eta_{1}) \leq \left\{\begin{array}{ll}f(\frac{2m}{n})& \mbox{ if $\frac{2m}{n}\in U_1$,}\\
 f(\sqrt{\frac{2m + M_{1} - \frac{4m^{2}}{n}}{n}})& \mbox{ if $\frac{2m}{n}\in U_2$.}
\end{array}\right.
$$

{\flushleft\bf Case 1.} Suppose that $\frac{2m}{n}\in U_1$ ( equivalently $\frac{2m}{n} \geq \sqrt{\frac{2m + M_{1} - \frac{4m^{2}}{n}}{n}}\Longleftrightarrow n\leq\frac{8m^{2}}{2m+M_{1}}$).

In this case, we have
\begin{equation}\\\\\label{EQ-eq-19}\begin{array}{ll}
QE(G)\leq f(\eta_1)&
\leq f(\frac{2m}{n})\\
& =\frac{2m}{n} + \sqrt{(n-1)[2m + M_{1} - \frac{4m^{2}}{n}-(\frac{2m}{n})^{2}]}.
\end{array}
\end{equation}
The first equality of (\ref{EQ-eq-19}) holds if and only if $\eta_{2} = \eta_{3} = \cdots = \eta_{n}$ and the second equality holds if and only if
$\eta_{1} = \frac{2m}{n}$, i.e., $q_{1} = \frac{4m}{n}$, which implies that $G$ is $r=\frac{2m}{n}$ regular by Lemma \ref{lem-4}.

Now suppose that (\ref{EQ-eq-19}) is an equality. Then $G$ is a $r=\frac{2m}{n}$ regular graph and   $\eta_{i}=|q_{i}-\frac{2m}{n}|= \sqrt{\frac{2m + M_{1} - \frac{4m^{2}}{n} - \eta_{1}^{2}}{n-1}}=\sqrt{\frac{r(n-r)}{n-1}}$ for $i=2,3,...,n$. Thus we have
\begin{equation}\\\\\label{EQ-eq-27}\{q_2,q_3,...,q_n\}\subseteq \{\sqrt{\frac{r(n-r)}{n-1}}+r,-\sqrt{\frac{r(n-r)}{n-1}}+r\}\ \mbox{ and $q_1=2r$ }.\end{equation}

First suppose that $G$ is connected.
From (\ref{EQ-eq-27}), the $Q$-spectrum of $G$ has three choices: $Spec_{Q}(G)=\{2r,[\sqrt{\frac{r(n-r)}{n-1}}+r]^{n-1}\}$, $Spec_{Q}(G)=\{2r,[-\sqrt{\frac{r(n-r)}{n-1}}+r]^{n-1}\}$ or $Spec_{Q}(G)=\{2r,[\sqrt{\frac{r(n-r)}{n-1}}+r]^{b},[-\sqrt{\frac{r(n-r)}{n-1}}+r]^{n-b-1}\}$.
If $Spec_{Q}(G)=\{2r,[\sqrt{\frac{r(n-r)}{n-1}}+r]^{n-1}\}$, then, by Lemma \ref{lem-10}, we have
$$\left\{\begin{array}{ll}
2r+(\sqrt{\frac{r(n-r)}{n-1}}+r)(n-1)=2m=nr,\\
(2r)^{2}+(\sqrt{\frac{r(n-r)}{n-1}}+r)^{2}(n-1)=2m+M_{1}=nr+nr^{2}.
\end{array}\right.
$$It follows that  $\sqrt{\frac{r(n-r)}{n-1}}=r-n<0$, a contradiction.
If $Spec_{Q}(G)=\{2r,[-\sqrt{\frac{r(n-r)}{n-1}}+r]^{n-1}\}$, then, as similar as above, we get $\sqrt{\frac{r(n-r)}{n-1}}=n-r$,  which leads to $Spec_{Q}(G)=\{2(n-1),[n-2]^{n-1}\}$, and so  $G\cong K_{n}$ by Lemma \ref{lem-14}.
If $Spec_{Q}(G)=\{2r,[\sqrt{\frac{r(n-r)}{n-1}}+r]^{b},[-\sqrt{\frac{r(n-r)}{n-1}}+r]^{n-b-1}\}$, then $G$ has three $A$-eigenvalues due to $G$ is regular. By Lemma \ref{lem-17}, $G$ is a strongly regular graph with parameters $(n,r,a,c)$, and $Spec_{A}(G)=\{r,[\sqrt{\frac{r(n-r)}{n-1}}]^{b},[-\sqrt{\frac{r(n-r)}{n-1}}]^{n-b-1}\}$. It is well known that
the $A$-eigenvalues \!$\sqrt{\frac{r(n-r)}{n-1}}$ and $-\!\!\sqrt{\frac{r(n-r)}{n-1}}$ of $G$ satisfy the equation $x^{2}-(a-c)x-(r-c)=0$, and $r+b\sqrt{\frac{r(n-r)}{n-1}}+(n-b-1)(-\sqrt{\frac{r(n-r)}{n-1}})=0$.
By simple calculation, we have $a=c=\frac{r(r-1)}{n-1}$ and $b=\frac{(n-1)\sqrt{r-c}-r}{2\sqrt{r-c}}$. It follows that $G$ is a strongly regular graph with parameters $(n,r,\frac{r(r-1)}{n-1},\frac{r(r-1)}{n-1})$, and thus $G \cong S(n,r)$.

Next suppose that $G$ is disconnected. From (\ref{EQ-eq-27}),  there exists some $2\leq b< n$ such that  $Spec_{Q}(G)=\{[2r]^{b+1},[-\sqrt{\frac{r(n-r)}{n-1}}+r]^{n-b-1}\}$, which implies that $\sqrt{\frac{r(n-r)}{n-1}}=r$, i.e., $r=1$.
By Lemma \ref{lem-10}, we have $2(b+1)=2m=n$, i.e., $b=\frac{n}{2}-1$. Therefore, $Spec_{Q}(G)=\{[2]^{\frac{n}{2}},[0]^{\frac{n}{2}}\}$. It follows that $G \cong \frac{n}{2}K_{2}$.

{\flushleft\bf Case 2.} Suppose that $\frac{2m}{n}\in U_2$ ( equivalently $\frac{2m}{n} < \sqrt{\frac{2m + M_{1} - \frac{4m^{2}}{n}}{n}}\Longleftrightarrow n>\frac{8m^{2}}{2m+M_{1}}$).

In this case, we have
\begin{equation}\\\label{EQ-eq-20}\begin{array}{ll}
QE(G) \leq f(\eta_1)
&\leq f(\sqrt{\frac{2m + M_{1} - \frac{4m^{2}}{n}}{n}})\\
&=\sqrt{\frac{2m + M_{1} - \frac{4m^{2}}{n}}{n}} + \sqrt{(n-1)(2m + M_{1} - \frac{4m^{2}}{n}-\frac{2m + M_{1} - \frac{4m^{2}}{n}}{n})}.
\end{array}
\end{equation}
The first equality of (\ref{EQ-eq-20}) holds if and only if $\eta_{2} = \eta_{3} = \cdots = \eta_{n}$ and the second equality holds if and only if
$\eta_{1} = \sqrt{\frac{2m + M_{1} - \frac{4m^{2}}{n}}{n}}$, i.e., $q_{1}=\sqrt{\frac{2m + M_{1} - \frac{4m^{2}}{n}}{n}}+\frac{2m}{n}$.

Now suppose that (\ref{EQ-eq-20}) is an equality. Then $\eta_{1} = \sqrt{\frac{2m + M_{1} - \frac{4m^{2}}{n}}{n}}$ and $\eta_{i}=|q_{i}-\frac{2m}{n}|= \sqrt{\frac{2m + M_{1} - \frac{4m^{2}}{n} - \eta_{1}^{2}}{n-1}}=\sqrt{\frac{2m + M_{1} - \frac{4m^{2}}{n}}{n}}$  for $i=2,3,...,n$. We have

\begin{equation}\\\\\label{EQ-eq-28}
\tiny{\{q_2,q_3,...,q_n\}\subseteq \{\sqrt{\frac{2m + M_{1} - \frac{4m^{2}}{n}}{n}}+\frac{2m}{n},-\sqrt{\frac{2m + M_{1} - \frac{4m^{2}}{n}}{n}}+\frac{2m}{n}\}\ and\ q_{1}=\sqrt{\frac{2m + M_{1} - \frac{4m^{2}}{n}}{n}}+\frac{2m}{n}.}
\end{equation}

First suppose that $G$ is connected. From (\ref{EQ-eq-28}), $Spec_{Q}\!=\!\{\! \sqrt{\frac{2m + M_{1} - \frac{4m^{2}}{n}}{n}}+\frac{2m}{n}\!, \![-\!\sqrt{\frac{2m + M_{1} - \frac{4m^{2}}{n}}{n}}+\frac{2m}{n}]^{n-1}\}$.
Since $\frac{2m}{n} < \sqrt{\frac{2m + M_{1} - \frac{4m^{2}}{n}}{n}}$, we have $q_{i}=-\sqrt{\frac{2m + M_{1} - \frac{4m^{2}}{n}}{n}}+\frac{2m}{n}<0$ for $i=2,3,...,n$, and it contradicts $q_i\geq 0$.

Next suppose that $G$ is disconnected. From (\ref{EQ-eq-28}), there exists some $2\leq b< n$ such that $Spec_{Q}(G)=\{[\sqrt{\frac{2m + M_{1} - \frac{4m^{2}}{n}}{n}}+\frac{2m}{n}]^{b+1},[-\sqrt{\frac{2m + M_{1} - \frac{4m^{2}}{n}}{n}}+\frac{2m}{n}]^{n-b-1}\}$. This  is also impossible as  above.

We complete this proof.
\end{proof}

In the following, by applying Lemma \ref{lem-19}, we can simplify the upper bounds of $QE(G)$ in Theorem \ref{thm-3}.

\begin{cor}\label{cor-6}
Let $G$ be a connected nonregular graph with $n$ vertices and $m$ edges. Then
$$\small{QE(G)< \left\{\begin{array}{ll}
 \frac{2m}{n} + \sqrt{(n-1)[2m + \frac{n}{4}(\Delta-\delta)^{2}-(\frac{2m}{n})^{2}]}& \mbox{ if $n\leq\frac{4m(\sqrt{1+(\Delta-\delta)^{2}}-1)}{(\Delta-\delta)^{2}}$},\\
 \sqrt{\frac{2m}{n}+\frac{1}{4}(\Delta-\delta)^{2}} + \sqrt{(n-1)(2m +\frac{n-1}{4}(\Delta-\delta)^{2}-\frac{2m}{n})} & \mbox{ if $n>\frac{4m(\sqrt{1+(\Delta-\delta)^{2}}-1)}{(\Delta-\delta)^{2}}$.}
\end{array}\right.}$$
\end{cor}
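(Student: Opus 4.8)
The plan is to feed the upper estimate for the first Zagreb index from Lemma~\ref{lem-19} into Theorem~\ref{thm-3}. Since $G$ is connected and \emph{not} regular, Lemma~\ref{lem-19} gives the \emph{strict} inequality $M_{1}<\frac{4m^{2}}{n}+\frac{n}{4}(\Delta-\delta)^{2}$. Write $K=2m+M_{1}-\frac{4m^{2}}{n}$, which by~(\ref{EQ-eq-9}) equals $\sum_{i=1}^{n}\gamma_{i}^{2}$, and put $K^{\ast}=2m+\frac{n}{4}(\Delta-\delta)^{2}$; then $0\le K<K^{\ast}$, and also $K-(\frac{2m}{n})^{2}\ge 0$ because $\gamma_{1}=q_{1}-\frac{2m}{n}\ge\frac{2m}{n}$ by Lemma~\ref{lem-4} and $\gamma_{1}^{2}\le K$. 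Both bounds of the corollary arise from those of Theorem~\ref{thm-3} by replacing $K$ with $K^{\ast}$ and using monotonicity of the square root; the real point is to match up the two threshold conditions.

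First I would reconcile the thresholds. Solving $(\Delta-\delta)^{2}n^{2}+8mn-16m^{2}\le 0$ as a quadratic in $n$ (positive root $\frac{4m(\sqrt{1+(\Delta-\delta)^{2}}-1)}{(\Delta-\delta)^{2}}$) shows that the hypothesis $n\le\frac{4m(\sqrt{1+(\Delta-\delta)^{2}}-1)}{(\Delta-\delta)^{2}}$ is equivalent to $2mn+\frac{n^{2}}{4}(\Delta-\delta)^{2}\le 4m^{2}$, that is, to $n\big(\frac{4m^{2}}{n}+\frac{n}{4}(\Delta-\delta)^{2}\big)\le 8m^{2}-2mn$. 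Combining this with $M_{1}<\frac{4m^{2}}{n}+\frac{n}{4}(\Delta-\delta)^{2}$ yields $n(2m+M_{1})<8m^{2}$, that is, $n<\frac{8m^{2}}{2m+M_{1}}$, so the first case of Theorem~\ref{thm-3} applies and gives $QE(G)\le\frac{2m}{n}+\sqrt{(n-1)\big(K-(\frac{2m}{n})^{2}\big)}$. Since $K<K^{\ast}$, this is strictly less than $\frac{2m}{n}+\sqrt{(n-1)\big(2m+\frac{n}{4}(\Delta-\delta)^{2}-(\frac{2m}{n})^{2}\big)}$, the first claimed bound.

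When $n>\frac{4m(\sqrt{1+(\Delta-\delta)^{2}}-1)}{(\Delta-\delta)^{2}}$ it is not a priori clear which branch of Theorem~\ref{thm-3} holds, so I would instead use the plain Cauchy--Schwarz estimate $QE(G)=\sum_{i=1}^{n}\gamma_{i}\le\sqrt{n\sum_{i=1}^{n}\gamma_{i}^{2}}=\sqrt{nK}<\sqrt{nK^{\ast}}$, via~(\ref{EQ-eq-9}) and $K<K^{\ast}$ (this is consistent with Theorem~\ref{thm-3}, whose second bound in fact simplifies to $\sqrt{nK}$). It then remains to verify the algebraic identity $\sqrt{nK^{\ast}}=\sqrt{\frac{2m}{n}+\frac{1}{4}(\Delta-\delta)^{2}}+\sqrt{(n-1)\big(2m+\frac{n-1}{4}(\Delta-\delta)^{2}-\frac{2m}{n}\big)}$: the first radicand equals $K^{\ast}/n$ and the second equals $\frac{(n-1)^{2}}{n}K^{\ast}$, so the right-hand side is $\sqrt{K^{\ast}/n}+(n-1)\sqrt{K^{\ast}/n}=\sqrt{nK^{\ast}}$. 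The only delicate step is the threshold reconciliation in the second paragraph: recognizing $\frac{4m(\sqrt{1+(\Delta-\delta)^{2}}-1)}{(\Delta-\delta)^{2}}$ as precisely the bound on $n$ that forces $n<\frac{8m^{2}}{2m+M_{1}}$ once the strict form of Lemma~\ref{lem-19} is used to eliminate the dependence on $M_{1}$; the rest (monotonicity of $\sqrt{\,\cdot\,}$, Cauchy--Schwarz, the cancellation identity for $\sqrt{nK^{\ast}}$) is routine.
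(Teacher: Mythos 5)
Your proposal is correct, but it reaches the two bounds by a different route than the paper. The paper does not invoke the final statement of Theorem \ref{thm-3} at all: it returns to the intermediate inequality (\ref{EQ-eq-29}), substitutes the bound of Lemma \ref{lem-19} inside the radicand (strictly, since $G$ is connected and nonregular), and then redoes the optimization with the modified function $g(x)=x+\sqrt{(n-1)\bigl(2m+\frac{n}{4}(\Delta-\delta)^{2}-x^{2}\bigr)}$, the two cases arising from whether $\frac{2m}{n}$ lies in the decreasing region $I_1$ or the increasing region $I_2$ of $g$; the condition $\frac{2m}{n}\in I_1$ is exactly $n\leq\frac{4m(\sqrt{1+(\Delta-\delta)^{2}}-1)}{(\Delta-\delta)^{2}}$, so the paper never needs to compare $n$ with $\frac{8m^{2}}{2m+M_{1}}$. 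You instead (i) prove that the corollary's first hypothesis, combined with the strict form of Lemma \ref{lem-19}, forces $n<\frac{8m^{2}}{2m+M_{1}}$, so the first branch of Theorem \ref{thm-3} applies and monotonicity of the square root finishes that case, and (ii) bypass the theorem entirely in the second case via plain Cauchy--Schwarz, $QE(G)\leq\sqrt{nK}<\sqrt{nK^{\ast}}$, together with the (correct) identity $\sqrt{nK^{\ast}}=\sqrt{K^{\ast}/n}+(n-1)\sqrt{K^{\ast}/n}$ showing this equals the stated expression. Both steps check out (your threshold reconciliation $2mn+\frac{n^{2}}{4}(\Delta-\delta)^{2}\leq 4m^{2}$ and the algebra for $\sqrt{nK^{\ast}}$ are correct, and $(\Delta-\delta)^2>0$ since $G$ is nonregular). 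What your route buys is a shorter, self-contained second case and an explicit explanation of how the corollary's threshold relates to the theorem's threshold $\frac{8m^{2}}{2m+M_{1}}$; what the paper's route buys is a uniform treatment of both cases that mirrors, almost verbatim, the proof of Theorem \ref{thm-3}.
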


\begin{proof}
By (\ref{EQ-eq-29}), we have $QE(G)\leq\eta_{1} + \sqrt{(n-1)(2m + M_{1} - \frac{4m^{2}}{n} - \eta_{1}^{2})}$, from which, by substituting $M_1$ according to   Lemma \ref{lem-19}, we get
$$QE(G)\le\eta_{1} + \sqrt{(n-1)(2m + \frac{n}{4}(\Delta-\delta)^2 - \eta_{1}^{2})}.$$
The above inequality must be strict since $G$ is a connected nonregular graph.
Now, we define
$g(x) = x + \sqrt{(n-1)(2m + \frac{n}{4}(\Delta-\delta)^2- x^{2})}$, where $ 0 \leq x \leq \sqrt{2m + \frac{n}{4}(\Delta-\delta)^2}$ is a variable standing for $\eta_1$.
As  similar as $f(x)$ in the proof of Theorem \ref{thm-3},  $g(x)$ decreases on $I_1=\{x\mid \sqrt{\frac{2m}{n}+\frac{1}{4}(\Delta-\delta)^2} \le x \leq \sqrt{2m + \frac{n}{4}(\Delta-\delta)^2}\}$ and increases on $I_2=\{x\mid 0 \leq x \leq \sqrt{\frac{2m}{n}+\frac{1}{4}(\Delta-\delta)^2}\}$.
Since $\eta_{1} = q_{1} - \frac{2m}{n} > \frac{2m}{n}$ by Lemma \ref{lem-4}, we have
$$
QE(G)<g(\eta_{1}) \leq \left\{\begin{array}{ll}g(\frac{2m}{n})& \mbox{ if $\frac{2m}{n}\in I_1$},\\
 g(\sqrt{\frac{2m + \frac{n}{4}(\Delta-\delta)^2}{n}})& \mbox{ if $\frac{2m}{n}\in I_2$,}
\end{array}\right.
$$
which is just what we need because of $\frac{2m}{n}\in I_1$ if and only if $n\leq\frac{4m(\sqrt{1+(\Delta-\delta)^{2}}-1)}{(\Delta-\delta)^{2}}$, and $\frac{2m}{n}\in I_2$ if and only if $n>\frac{4m(\sqrt{1+(\Delta-\delta)^{2}}-1)}{(\Delta-\delta)^{2}}$.
\end{proof}

If $G$ is a regular graph, then $M_{1} = nr^{2}$, $2m=nr$ and so $n\leq\frac{8m^{2}}{2m+M_{1}}$.
Using Theorem \ref{thm-3}, we directly get the upper bound for $QE(G)$ of regular graph $G$, which can also be viewed as the bounds of $E(G)$.

\begin{cor}\label{cor-7}
Let $G$ be a regular graph with $n$ vertices and $m$ edges. Then
$$
E(G)=QE(G) \leq \frac{2m}{n} + \sqrt{(n-1)[2m - (\frac{2m}{n})^{2}]}
$$
with equality holds if and only if $G \cong K_{n}$, $\frac{n}{2}K_{2}$ or $G\cong S(n,r)$.
\end{cor}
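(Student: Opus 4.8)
The plan is to obtain this as a direct specialization of Theorem \ref{thm-3}, since an $r$-regular graph always lands in the first case of that theorem. First I would record the standard identities for an $r$-regular graph with $n$ vertices and $m$ edges: $2m = nr$ and $M_{1} = \sum_{i} d_{i}^{2} = nr^{2}$, and recall from the introduction that $E(G) = QE(G)$ in the regular case (because $q_{i} = \lambda_{i} + \frac{2m}{n}$, so $\sum_i |q_i - \frac{2m}{n}| = \sum_i |\lambda_i|$).

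Next I would verify that the hypothesis $n \leq \frac{8m^{2}}{2m + M_{1}}$ of the first branch of Theorem \ref{thm-3} holds automatically for regular graphs. Substituting $2m = nr$ and $M_{1} = nr^{2}$ gives $\frac{8m^{2}}{2m + M_{1}} = \frac{2n^{2}r^{2}}{nr(1+r)} = \frac{2nr}{1+r}$, so the desired inequality $n \leq \frac{2nr}{1+r}$ is equivalent to $1 + r \leq 2r$, i.e. $r \geq 1$, which holds since $G$ has at least one edge. Hence Theorem \ref{thm-3} applies and yields
$$QE(G) \leq \frac{2m}{n} + \sqrt{(n-1)\Big[2m + M_{1} - \frac{4m^{2}}{n} - \Big(\frac{2m}{n}\Big)^{2}\Big]},$$
with equality if and only if $G \cong K_{n}$, $\frac{n}{2}K_{2}$, or $G \cong S(n,r)$.

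Finally I would simplify, using $M_{1} = nr^{2} = n\big(\tfrac{2m}{n}\big)^{2} = \frac{4m^{2}}{n}$, so that $M_{1} - \frac{4m^{2}}{n} = 0$ and the right-hand side collapses to $\frac{2m}{n} + \sqrt{(n-1)[2m - (\frac{2m}{n})^{2}]}$, which is the claimed bound; together with $E(G) = QE(G)$ this gives the upper bound. For the equality case I would simply note that the three extremal graphs $K_{n}$, $\frac{n}{2}K_{2}$ and $S(n,r)$ delivered by Theorem \ref{thm-3} are all regular (of degree $n-1$, $1$, and $r$ respectively), so the characterization transfers verbatim with no extra condition. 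There is essentially no real obstacle here: the only point needing a moment's care is checking the threshold inequality $n \leq \frac{8m^{2}}{2m+M_{1}}$ (equivalently $r\geq 1$, which is why we take $m\geq 1$), after which everything is an immediate substitution into Theorem \ref{thm-3}.
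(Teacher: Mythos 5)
Your proposal is correct and follows essentially the same route as the paper: the authors likewise note that for a regular graph $2m=nr$ and $M_{1}=nr^{2}$ force $n\leq\frac{8m^{2}}{2m+M_{1}}$, then specialize the first case of Theorem \ref{thm-3}, where $M_{1}-\frac{4m^{2}}{n}=0$ collapses the bound and the equality characterization ($K_{n}$, $\frac{n}{2}K_{2}$, $S(n,r)$, all regular) carries over unchanged. Your explicit reduction of the threshold to $r\geq 1$ is a small extra verification the paper leaves implicit, but it is the same argument.
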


\begin{remark}\label{rem-3}
For a regular graph, the result of Corollary \ref{cor-7} is the same as that of Theorem 1 mentioned by J.H. Koolen et al. in {\cite{Koolen}}.
\end{remark}

\section{Applications}
By applying previous Theorems and Corollaries, in this section, we  give two  bounds of $QE(G)$ for specific regular graphs, from which we list  two tables that compare the values of  the various bounds of $QE(G)$ and it  indicates the improvement of our bounds for $QE(G)$.

$(3, 6)$-Fullerene arise in chemistry as molecules consisting entirely of carbon atoms. Each carbon atom is bonded to exactly three others, thus the vertices of the graph represent the carbon atoms, the edges the bonded pairs of atoms, and the $(3, 6)$-fullerene is a connected $3$-regular graph  with all faces $3$-cycles or $6$-cycles.
Since fullerene has always been an important research object in the fields of mathematics, physics and chemistry due to its unique structure and excellent physical properties, Matt DeVos et al. in {\cite{DeVos}} give the spectrum of $(3,6)$-fullerene has the form $\{3,-1,-1,-1\}\bigcup L \bigcup(-L)$, where $L$ is a multiset of nonnegative real numbers, and $-L$ is the multiset of their negatives. Here  we estimate the bounds for the energy of $(3,6)$-fullerene.

Since $(3,6)$-fullerene has the spectrum of the form $\{3,-1,-1,-1\}\bigcup L \bigcup(-L)$, from Corollary \ref{cor-3} and Corollary \ref{cor-7}, one can directly get the bounds of the energy of $(3,6)$-fullerene in the following result.

\begin{thm}\label{exa-1}
Let $G$ be a $(3,6)$-fullerene with $n$ vertices. Then
$$3+\sqrt{3(n-1)(n-3)}>
E(G)=QE(G) >\left\{\begin{array}{ll}
n & \mbox{ if\  $|\gamma_{n}|=0$}, \\
6n \cdot \frac{\sqrt{|\gamma_{n}|}}{3+|\gamma_{n}|} & \mbox{ if\  $0 < |\gamma_{n}| < 1$},\\
\frac{3n}{2} &  \mbox{ if\  $|\gamma_{n}| \geq1$}.
\end{array}\right.
$$
\end{thm}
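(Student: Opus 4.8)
The strategy is to specialise the regular-graph corollaries of Section~3 to $r=3$ and then verify that each inequality is strict for a genuine $(3,6)$-fullerene. A $(3,6)$-fullerene $G$ on $n$ vertices is a connected $3$-regular planar graph, so $m=\tfrac{3n}{2}$, $\bar d=\tfrac{2m}{n}=3$ and $QE(G)=E(G)$. For the upper bound I would simply quote Corollary~\ref{cor-7} with $2m=3n$, which yields at once $E(G)=QE(G)\le\tfrac{2m}{n}+\sqrt{(n-1)\bigl[2m-(\tfrac{2m}{n})^{2}\bigr]}=3+\sqrt{3(n-1)(n-3)}$. For the lower bound the relevant tool is Corollary~\ref{cor-3}, which splits into the cases $\gamma_n=0$ and $\gamma_n>0$; the job is to read off the three listed right-hand sides and argue strictness.

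The key preliminary observation for the lower bound is that, since $G$ is $3$-regular, $Q(G)=3I+A(G)$, so $\gamma_i=|q_i-3|=|\lambda_i|$ for all $i$; feeding in the spectral shape $\{3,-1,-1,-1\}\cup L\cup(-L)$ shows that the multiset $\{\gamma_1,\dots,\gamma_n\}$ equals $\{3,1,1,1\}\cup L\cup L$, whence $\gamma_n=\min\{1,\min L\}\le 1$. This is exactly why only the ranges $\gamma_n=0$, $0<\gamma_n<1$ and $\gamma_n\ge1$ (the last forcing $\gamma_n=1$) can occur. Applying Corollary~\ref{cor-3} in each case gives $E(G)\ge n$, $E(G)\ge 2nr\cdot\tfrac{\sqrt{\gamma_n}}{r+\gamma_n}=6n\cdot\tfrac{\sqrt{\gamma_n}}{3+\gamma_n}$, and, specialising the latter at $\gamma_n=1$, $E(G)\ge 6n\cdot\tfrac14=\tfrac{3n}{2}$, which are precisely the three bounds in the statement.

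The real content, and the step I expect to be the main obstacle, is showing that none of these inequalities is attained, i.e.\ that a $(3,6)$-fullerene is never an equality graph of Corollary~\ref{cor-3} or Corollary~\ref{cor-7}. I would dispose of the candidate extremal graphs one by one, using connectivity, $3$-regularity, planarity and the face structure. The graph $\tfrac n2K_2$ is disconnected, hence excluded. A $3$-regular graph isomorphic to $K_n$ or $K_{\frac n2,\frac n2}$ must be $K_4$ or $K_{3,3}$; taking $G$ to be a nondegenerate $(3,6)$-fullerene (so $n>4$ and $G\not\cong K_4$) excludes the former, and $K_{3,3}$ is non-planar. For $K_{\frac n2,\frac n2}\setminus F$ I would compute its $Q$-spectrum via Lemma~\ref{lem-11} and find $\gamma_n=1$, which already rules it out in the range $0<\gamma_n<1$; and when $\gamma_n=1$ is allowed, its only $3$-regular member is $K_{4,4}\setminus F\cong Q_3$, the $3$-cube, all of whose faces are $4$-cycles, so $Q_3$ is not a $(3,6)$-fullerene. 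The genuinely non-mechanical point is excluding $S(n,r)$: here I would invoke that a connected $3$-regular strongly regular graph has diameter at most $2$, hence at most $1+3+3\cdot2=10$ vertices, and is therefore one of $K_4$, $K_{3,3}$ or the Petersen graph, of which only $K_4$ is planar and it has already been excluded by $n>4$. Combining these exclusions makes all three inequalities strict, and the proof is complete; nothing beyond the Section~3 corollaries plus these small structural checks is needed.
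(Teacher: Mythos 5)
Your proposal is correct and follows essentially the paper's own route: the paper's proof is just the one-line remark that Corollary \ref{cor-3} and Corollary \ref{cor-7}, specialised to $r=3$, $m=\frac{3n}{2}$ (together with the DeVos spectral form forcing $\gamma_n\le 1$), give these bounds, and your exclusion of the extremal graphs $\frac{n}{2}K_2$, $K_n$, $K_{\frac n2,\frac n2}$, $K_{\frac n2,\frac n2}\setminus F$ and $S(n,r)$ merely supplies the strictness details the paper leaves implicit. The one point worth flagging is that $K_4$ (the tetrahedron, $n=4$) is itself a $(3,6)$-fullerene and attains equality in both corollaries, so your restriction to $n>4$ is genuinely needed and exposes an edge case that the strict inequalities in the paper's statement overlook.
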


\begin{remark}\label{rem-4}
The limit of ratio of the upper and lower bound of the energy of $(3,6)$-fullerene is
$$\left\{\begin{array}{ll}\lim_{n\to \infty} \frac{3+\sqrt{3(n-1)(n-3)}}{n}=\sqrt{3}& \mbox{ if $|\gamma_{n}|=0$},\\
  \lim_{n\to \infty} \frac{3+\sqrt{3(n-1)(n-3)}}{\frac{3n}{2}}=\frac{2\sqrt{3}}{3}& \mbox{ if $|\gamma_{n}| \geq1$.}
  \end{array}\right.
$$
It implies that while $n$ is sufficiently large, we have
$$\left\{\begin{array}{ll}QE(G)\in (n, \sqrt{3}n]& \mbox{ if $|\gamma_{n}|=0$},\\
  QE(G)\in (\frac{3}{2}n, \frac{2\sqrt{3}}{3}n]& \mbox{ if $|\gamma_{n}| \geq1$}.
  \end{array}\right.
$$
We are not sure wether there exist infinite $(3,6)$-fullerene such that the corresponding $\gamma_n$ equals zero, tends to zero or great than some positive constant. However, there indeed exist such regular graphs. One can refer to Remark \ref{rem-5} for details.
\end{remark}

The \emph{cartesian product} of simple graphs $G_1$ and $G_2$ is denoted by $G_1\Box G_2$. In particular, for $n\geq3$, the cartesian product $C_n\Box P_2$ is a polyhedral graph and called the \emph{$n$-prism}. One can refer to Section 1.4 of {\cite{Bondy}} for more details.
Now we use Corollary \ref{cor-3} and Corollary \ref{cor-7} to give the bound for the $QE(G)$ of $n$-prism. First of all  we need a lemma bellow.

\begin{lem}\label{lem-20}
Let $G=C_n\Box P_2$ be a $n$-prism with $2n\geq6$ vertices. Then
$$\gamma_{2n}(G)=\left\{\begin{array}{ll}
{2\cos(\frac{2\pi\lfloor\frac{n}{6}\rfloor}{n})-1} & \mbox{if\  $n=6k+1$ or $6k+2$},\\
{1-2\cos(\frac{2\pi\lceil\frac{n}{6}\rceil}{n})} & \mbox{if\ $n=6k+4$ or $6k+5$}.
\end{array}\right.$$
\end{lem}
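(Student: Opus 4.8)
Since $G=C_n\Box P_2$ is $3$-regular on $2n$ vertices, $\bar d=3$ and $q_i=\lambda_i+3$ for every $i$, so $\gamma_i=|q_i-3|=|\lambda_i|$ and hence $\gamma_{2n}(G)=\min_{1\le i\le 2n}|\lambda_i|$ is the smallest modulus among the adjacency eigenvalues of $G$. As $Spec(C_n)=\{2\cos(2\pi j/n):0\le j\le n-1\}$ and $Spec(P_2)=\{1,-1\}$, Lemma \ref{lem-16} gives $Spec(G)=\{2\cos(2\pi j/n)+\varepsilon:0\le j\le n-1,\ \varepsilon=\pm1\}$, whence
\[
\gamma_{2n}(G)=\min_{0\le j\le n-1}\operatorname{dist}\!\Big(2\cos\tfrac{2\pi j}{n},\{-1,1\}\Big).
\]

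The first step is to understand $h(\theta):=\operatorname{dist}(2\cos\theta,\{-1,1\})$ on $[0,\pi]$ (the even symmetry $\theta\leftrightarrow2\pi-\theta$ reduces everything to this interval): $h$ equals $2\cos\theta-1$ on $[0,\tfrac{\pi}{3}]$, $1-2\cos\theta$ on $[\tfrac{\pi}{3},\tfrac{\pi}{2}]$, $1+2\cos\theta$ on $[\tfrac{\pi}{2},\tfrac{2\pi}{3}]$ and $-1-2\cos\theta$ on $[\tfrac{2\pi}{3},\pi]$; it is strictly monotone on each of these four pieces and vanishes only at $\tfrac{\pi}{3},\tfrac{2\pi}{3}$, where all one-sided slopes have absolute value $\sqrt3$. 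Consequently $\gamma_{2n}(G)=0$ iff some $j/n\in\{\tfrac16,\tfrac13,\tfrac23,\tfrac56\}$, i.e.\ iff $n\equiv0$ or $3\pmod6$ — the residues excluded from the statement — and in all other cases the minimum is attained at the sample $2\pi j/n$ closest (in angle) to $\tfrac{\pi}{3}$ or to $\tfrac{2\pi}{3}$. Using $\cos(2\pi j/n)=\cos(2\pi(n-j)/n)$, samples near $\tfrac{4\pi}{3},\tfrac{5\pi}{3}$ merely repeat values of samples near $\tfrac{2\pi}{3},\tfrac{\pi}{3}$, so it is enough to compare the integer $j_0$ nearest $n/6$ — which is $\lfloor n/6\rfloor$ for $n\equiv1,2$ and $\lceil n/6\rceil$ for $n\equiv4,5\pmod6$ — with the integer $j_1$ nearest $n/3$.

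Now fix the residue class. In each case $0<2\pi j_0/n\le\tfrac{\pi}{2}$ with $2\pi j_0/n<\tfrac{\pi}{3}$ precisely when $n\equiv1,2$; reading $h$ off the relevant piece gives $h(2\pi j_0/n)=2\cos(2\pi\lfloor n/6\rfloor/n)-1$ for $n\equiv1,2$ and $h(2\pi j_0/n)=1-2\cos(2\pi\lceil n/6\rceil/n)$ for $n\equiv4,5$, which are exactly the two formulas claimed. It then remains to show $h(2\pi j_1/n)\ge h(2\pi j_0/n)$. When $n\equiv2,4\pmod6$ ($n$ even) one checks $j_1=\tfrac n2-j_0$, so $2\pi j_1/n=\pi-2\pi j_0/n$, $\cos(2\pi j_1/n)=-\cos(2\pi j_0/n)$, and a one-line check with the piecewise formula gives $h(2\pi j_1/n)=h(2\pi j_0/n)$. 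When $n\equiv1,5\pmod6$ ($n$ odd) the sample $2\pi j_1/n$ sits at angular distance $\tfrac{2\pi}{3n}$ from $\tfrac{2\pi}{3}$ while $2\pi j_0/n$ sits at angular distance only $\tfrac{\pi}{3n}$ from $\tfrac{\pi}{3}$; writing these offsets as $2\epsilon$ and $\epsilon$ with $\epsilon=\tfrac{\pi}{3n}$ and expanding $2\cos$, one finds for $n\equiv1$ that
\[
h(2\pi j_1/n)-h(2\pi j_0/n)=(1-\cos2\epsilon)+(1-\cos\epsilon)+\sqrt3\,(\sin2\epsilon-\sin\epsilon)\ >\ 0\qquad(0<\epsilon<\tfrac{\pi}{3}),
\]
while for $n\equiv5$ the analogous difference is $\sqrt3\,(\sin2\epsilon-\sin\epsilon)-(1-\cos2\epsilon)-(1-\cos\epsilon)$, which is positive because $\epsilon=\tfrac{\pi}{3n}\le\tfrac{\pi}{15}$. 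The small cases $n=4,5,7,8$ are checked by hand.

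The only real obstacle is this final comparison step: the picture ``the sample nearest a zero of $h$ wins, and all zeros look the same to first order'' makes the answer transparent, but turning it into a rigorous global statement needs (i) the strict monotonicity of the four pieces of $h$ to discard samples far from $\tfrac{\pi}{3},\tfrac{2\pi}{3}$, (ii) the supplementary-angle identity $j_1=\tfrac n2-j_0$ to obtain \emph{equality} of the two candidates when $n$ is even, and (iii) an honest trigonometric inequality (not merely the linearization) when $n$ is odd, together with the base cases. Everything else is elementary arithmetic with floors, ceilings and residues mod $6$.
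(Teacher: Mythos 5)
Your proposal is correct and follows essentially the same route as the paper: reduce to minimizing the distance of the samples $2\cos(2\pi j/n)$ from $\{\pm 1\}$, locate the samples nearest the zeros $\pi/3$ and $2\pi/3$ (i.e.\ $j$ near $n/6$ and $n/3$), and compare the two candidates, obtaining equality via the supplementary-angle identity when $n\equiv 2,4 \pmod 6$ and a strict trigonometric inequality when $n\equiv 1,5 \pmod 6$. The only cosmetic difference is that you pass to adjacency eigenvalues via regularity ($\gamma_i=|\lambda_i|$), whereas the paper applies the Cartesian-product spectrum lemma directly to the signless Laplacian spectra of $C_n$ and $P_2$.
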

\begin{proof}
It is well known that the $Spec_{Q}(C_n)=\{\ 2\cos{(\frac{2\pi j}{n})}+2 \mid  j=0,1,...,n-1\ \}$ and $Spec_{Q}(P_2)=\{\ 2,0\ \}$. By Lemma \ref{lem-16}, we have
$Spec_{Q}(G)=\{\ 2\cos{(\frac{2\pi j}{n})}+4,\ 2\cos{(\frac{2\pi j}{n})}+2 \ | \ j=0,1,...,n-1\ \}$. Thus $$\gamma_{2n}(G)=\min_{0\le j\le n-1}\{\ | 2\cos{(\frac{2\pi j}{n})}+1 |,\  |2\cos{(\frac{2\pi j}{n})}-1  |\  \}.$$ It is clear that $\gamma_{2n}(G)=0$ iff
$ 2\cos{(\frac{2\pi j}{n})}+1 =0$ or $2\cos{(\frac{2\pi j}{n})}-1=0$ iff $j=\frac{n}{3}$, $\frac{2n}{3}$ or $j=\frac{n}{6}$, $\frac{5n}{6}$ iff $n=0$ ($\mod3$) or $n=0$ ($\mod6$), i.e., $\gamma_{2n}(G)=0$ iff $n=0$ ($\mod3$). Therefore, $\gamma_{2n}>0$ iff $n\not=0$ ($\mod3$). In what follows we suppose that $n=6k+1$, $6k+2$, $6k+4$ or $6k+5$.

Let $\phi(j)=| 2\cos{(\frac{2\pi j}{n})}+1 |$ and $\varphi(j)=| 2\cos{(\frac{2\pi j}{n})}-1 |$, where $0\le j<n$. By putting $a=\min_{0\le j\le n-1} \phi(j)$ and $b=\min_{0\le j\le n-1}\varphi(j)$, we have  $\gamma_{2n}(C_n\Box P_2)=\min\{a,b\}$. First we determine $a$.
By considering the monotonicity of $\cos(x)$ on $(\frac{\pi}{2},\pi)$, we see that $\phi(j)=2 |\cos{(\frac{2\pi j}{n})}-\cos(\frac{2\pi}{3})|$ achieves its minimum value at
$0\leq j_1\leq n-1$ such that  $|\frac{2\pi j_1}{n}-\frac{2\pi}{3}|=\frac{|3j_1-n|2\pi}{3n}$ is as small as possible. It implies that
$$|\frac{2\pi j_1}{n}-\frac{2\pi}{3}|=\frac{|3j_1-n|2\pi}{3n}=\left\{\begin{array}{ll}
\frac{2\pi}{3n}& \mbox{ we take $j_1=k=\lfloor\frac{n}{3}\rfloor$ if $n=3k+1$},\\
\frac{2\pi}{3n}& \mbox{ we take $j_1=k+1=\lceil\frac{n}{3}\rceil$ if $n=3k+2$}.\\
\end{array}\right.
$$
Therefore,
$$\footnotesize{a=\left\{\begin{array}{ll}
\phi(\lfloor\frac{n}{3}\rfloor)=2 |\cos(\frac{2\pi \lfloor\frac{n}{3}\rfloor}{n})-\cos(\frac{2\pi}{3})|=2|\cos(\frac{2\pi }{3+\frac{1}{k}})-\cos(\frac{2\pi}{3})|=2\cos(\frac{2\pi }{3+\frac{1}{k}})+1 & \mbox{ if $n=3k+1$},\\
\phi(\lceil\frac{n}{3}\rceil)=2 |\cos{(\frac{2\pi \lfloor\frac{n}{3}\rfloor}{n})}-\cos(\frac{2\pi}{3})|=2|\cos(\frac{2\pi }{3-\frac{1}{k+1}})-\cos(\frac{2\pi}{3})|=-2\cos(\frac{2\pi }{3-\frac{1}{k+1}})-1 & \mbox{ if $n=3k+2$}.\\
\end{array}\right.}
$$
Similarly, $\varphi(j)=2 |\cos{(\frac{2\pi j}{n})}-\cos(\frac{\pi}{3})|$
achieves its minimum value at
$0\leq j_2\leq n-1$ such that  $|\frac{2\pi j_2}{n}-\frac{\pi}{3}|=\frac{|6j_2-n|\pi}{3n}$ is as small as possible. It implies  that
$$|\frac{2\pi j_2}{n}-\frac{\pi}{3}|=\frac{|6j_2-n|\pi}{3n}=\left\{\begin{array}{ll}
\frac{\pi}{3n}& \mbox{ we take $j_2=k=\lfloor\frac{n}{6}\rfloor$ if $n=6k+1$},\\
\frac{2\pi}{3n}& \mbox{ we take $j_2=k=\lfloor\frac{n}{6}\rfloor$ if $n=6k+2$},\\
\frac{2\pi}{3n}& \mbox{ we take $j_2=k+1=\lceil\frac{n}{6}\rceil$ if $n=6k+4$},\\
\frac{\pi}{3n}& \mbox{ we take $j_2=k+1=\lceil\frac{n}{6}\rceil$ if $n=6k+5$}.
\end{array}\right.
$$
Therefore,
$$\footnotesize{b=\left\{\begin{array}{ll}
\varphi(\lfloor\frac{n}{6}\rfloor)=2 |\cos{(\frac{2\pi\lfloor\frac{n}{6}\rfloor}{n})}-\cos{(\frac{\pi}{3})}|=2|\cos{(\frac{2\pi}{6+\frac{1}{k}})}- \cos{(\frac{\pi}{3})}|=2\cos{(\frac{2\pi}{6+\frac{1}{k}})}-1 & \mbox{  if $n=6k+1$},\\
\varphi(\lfloor\frac{n}{6}\rfloor)=2 |\cos{(\frac{2\pi\lfloor\frac{n}{6}\rfloor}{n})}-\cos{(\frac{\pi}{3})}|=2|\cos{(\frac{2\pi}{6+\frac{2}{k}})}- \cos{(\frac{\pi}{3})}|=2\cos{(\frac{2\pi}{6+\frac{2}{k}})}-1 & \mbox{  if  $n=6k+2$},\\
\varphi(\lceil\frac{n}{6}\rceil)=2 |\cos{(\frac{2\pi\lceil\frac{n}{6}\rceil}{n})}-\cos{(\frac{\pi}{3})}|=2|\cos{(\frac{2\pi}{6-\frac{2}{k+1}})}- \cos{(\frac{\pi}{3})}|=1-2\cos{(\frac{2\pi}{6-\frac{2}{k+1}})} & \mbox{ if $n=6k+4$},\\
\varphi(\lceil\frac{n}{6}\rceil)=2 |\cos{(\frac{2\pi\lceil\frac{n}{6}\rceil}{n})}-\cos{(\frac{\pi}{3})}|=2|\cos{(\frac{2\pi}{6-\frac{1}{k+1}})}- \cos{(\frac{\pi}{3})}|=1-2\cos{(\frac{2\pi}{6-\frac{1}{k+1}})} & \mbox{ if $n=6k+5$}.\\
\end{array}\right.}
$$
Next we show that $b\leq a$. If $n=6k+1$, then
$$\begin{array}{ll}
a-b&=[2\cos{(\frac{2\pi }{3+\frac{1}{2k}})}+1]-[2\cos{(\frac{2\pi}{6+\frac{1}{k}})}-1]=2\cos{(2\cdot\frac{2\pi}{6+\frac{1}{k}})}-2\cos{(\frac{2\pi}{6+\frac{1}{k}})}+2\\
&=2[2\cos^{2}{(\frac{2\pi}{6+\frac{1}{k}})}-1]-2\cos{(\frac{2\pi}{6+\frac{1}{k}})}+2=2\cos{(\frac{2\pi}{6+\frac{1}{k}})}[2\cos{(\frac{2\pi}{6+\frac{1}{k}})}-1]>0.
\end{array}$$
Similarly, one can verify that $b<a$ if $n=6k+5$.
If $n=6k+2$, then
$$\begin{array}{ll}
a-b&=[-2\cos{(\frac{2\pi }{3-\frac{1}{2k+1}})}-1]-[2\cos{(\frac{2\pi}{6+\frac{2}{k}})}-1]=-2\cos{(\frac{2\pi }{3-\frac{1}{2k+1}})}-2\cos{(\frac{2\pi}{6+\frac{2}{k}})}\\
&=-2\cos{(\pi-\frac{2\pi}{6+\frac{2}{k}})}-2\cos{(\frac{2\pi}{6+\frac{2}{k}})}=2\cos{(\frac{2\pi}{6+\frac{2}{k}})}-2\cos{(\frac{2\pi}{6+\frac{2}{k}})}=0.
\end{array}$$
Similarly, one can verify that $b=a$ if $n=6k+4$.
It follows that
$$\gamma_{2n}(C_n\Box P_2)=b=\left\{\begin{array}{ll}
2\cos{(\frac{ 2\pi \lfloor\frac{n}{6}\rfloor}{n})}-1 & \mbox{if\  $n=6k+1$ or $6k+2$},\\
1-2\cos{(\frac{ 2\pi \lceil\frac{n}{6}\rceil}{n})} & \mbox{if\  $n=6k+4$ or $6k+5$}.
\end{array}\right.$$

We complete this proof.
\end{proof}
\begin{remark}\label{rem-5}
In Remark \ref{rem-2} we mention that the lower bound of $QE(G)$ depend on $\gamma_n$. The Lemma \ref{lem-20} provides us examples that there exist a sequence of graphs such that $\gamma_{2n}=0$, say $\gamma_{6k}(C_{3k}\Box P_2)=0$ for any $k>0$ and also exist a sequence of graphs  such that $\gamma_{2n}>0$, say $\gamma_{6k}(C_{3k}\Box P_2)=b>0$ for any $k>0$. However, $b$ tends to zero while $k$ goes to infinite. It is interesting to find the sequence of graphs $\{G_n\}$ such that there exists a constant $c>0$ satisfying $\gamma_n>c$.
\end{remark}

From Lemma \ref{lem-20}, we get the following result.
\begin{thm}\label{exa-2}
Let $G=C_n\Box P_2$ be a $n$-prism with $2n\geq6$ vertices. Then
\begin{equation}\\\\\label{EQ-eq-18}
\footnotesize{3+\sqrt{3(2n-1)(2n-3)}>
E(G)=QE(G) \geq \left\{\begin{array}{ll}
{2n} & \mbox{if\ $n=3k$},\\
{6n \cdot \frac{\sqrt{{2\cos(\frac{2\pi\lfloor\frac{n}{6}\rfloor}{n})-1}}}{1+\cos(\frac{2\pi\lfloor\frac{n}{6}\rfloor}{n})}} & \mbox{if\ $n=6k+1$ or $6k+2$},\\
{6n \cdot \frac{\sqrt{1-2\cos(\frac{2\pi\lceil\frac{n}{6}\rceil}{n})}}{2-\cos(\frac{2\pi\lceil\frac{n}{6}\rceil}{n})}} & \mbox{if\ $n=6k+4$ or $6k+5$}
\end{array}\right.}
\end{equation}
with the right equality holds if and only if $G$ is $4$-prism $(C_{4}\Box P_2)$.
\end{thm}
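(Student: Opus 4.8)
The plan is to regard $G=C_n\Box P_2$ as a connected $3$-regular graph on $N=2n$ vertices with $M=3n$ edges, apply Corollary~\ref{cor-7} to get the upper bound and Corollary~\ref{cor-3} to get the lower bound, and then insert the explicit value of $\gamma_{2n}(G)$ supplied by Lemma~\ref{lem-20}. All the hypotheses are met: the cartesian product of two connected graphs is connected, and since $C_n$ and $P_2$ are $2$-regular and $1$-regular respectively, $G$ is $3$-regular with $r=\frac{2M}{N}=3$.

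First I would substitute $N=2n$, $M=3n$ and $r=3$ into Corollary~\ref{cor-7}. Since $2M-\big(\tfrac{2M}{N}\big)^2=6n-9=3(2n-3)$, this gives $E(G)=QE(G)\le 3+\sqrt{(2n-1)(6n-9)}=3+\sqrt{3(2n-1)(2n-3)}$. To see the inequality is strict, I would rule out the three extremal graphs of Corollary~\ref{cor-7}: $G$ is connected, hence not $\tfrac{N}{2}K_2$; $G$ is not complete because $2n\ge 6$; and $G$ is not a strongly regular graph, since by Lemma~\ref{lem-16} its adjacency spectrum is $\{\,2\cos(\tfrac{2\pi j}{n})\pm 1:0\le j\le n-1\,\}$, which contains at least four distinct values for every $n\ge 3$, whereas a connected non-complete strongly regular graph has exactly three distinct adjacency eigenvalues. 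Hence the strict upper bound.

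For the lower bound I would split according to whether $\gamma_{2n}(G)=0$. By the proof of Lemma~\ref{lem-20}, $\gamma_{2n}(G)=0$ precisely when $n\equiv 0\pmod 3$, and then Corollary~\ref{cor-3} gives $QE(G)=E(G)\ge N=2n$. When $n\not\equiv 0\pmod 3$ we have $\gamma_{2n}(G)>0$, so Corollary~\ref{cor-3} gives $QE(G)\ge 2Nr\cdot\frac{\sqrt{\gamma_{2n}}}{r+\gamma_{2n}}=12n\cdot\frac{\sqrt{\gamma_{2n}}}{3+\gamma_{2n}}$. Plugging in the values of $\gamma_{2n}$ from Lemma~\ref{lem-20} and using $3+(2\cos\theta-1)=2(1+\cos\theta)$ for $n=6k+1,6k+2$ and $3+(1-2\cos\theta)=2(2-\cos\theta)$ for $n=6k+4,6k+5$ turns these into the three displayed lower bounds of (\ref{EQ-eq-18}).

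Finally, for the equality discussion I would chase the extremal conditions of Corollary~\ref{cor-3}. If $\gamma_{2n}=0$, the bound $2n$ is attained only by $K_{n,n}$, which is $n$-regular; matching the $3$-regular prism would force $n=3$, but $C_3\Box P_2$ contains triangles while $K_{3,3}$ does not, so equality never occurs here. If $\gamma_{2n}>0$, the bound is attained only by $K_{2n}$ or $K_{n,n}\backslash F$; the former is $(2n-1)$-regular (never $3$-regular for $n\ge 3$), and the latter is $(n-1)$-regular, forcing $n=4$. For $n=4$ one has $C_4\Box P_2=Q_3=K_{4,4}\backslash F$, whose adjacency spectrum $\{3,[1]^3,[-1]^3,-3\}$ yields $E(G)=QE(G)=12$, which equals the claimed bound $6\cdot4\cdot\frac{\sqrt{1}}{2-\cos(\pi/2)}=12$; so the right equality holds iff $G$ is the $4$-prism. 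I expect the only care needed is the bookkeeping of replacing the vertex and edge counts by $2n$ and $3n$ throughout and the elementary trigonometric simplification; the mildly substantive step, the clean verification that a prism is never strongly regular, is handled by the eigenvalue count via Lemma~\ref{lem-16}.
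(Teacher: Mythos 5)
Your proposal is correct and follows essentially the same route as the paper: apply Corollary \ref{cor-7} for the strict upper bound, Corollary \ref{cor-3} together with the value of $\gamma_{2n}$ from Lemma \ref{lem-20} for the lower bounds, and identify the equality case via $K_{4,4}\backslash F\cong C_4\Box P_2$. Your explicit checks that the prism is not strongly regular and that $K_{n,n}$, $K_{2n}$ cannot occur are just a more detailed spelling-out of what the paper leaves implicit.
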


\begin{proof}
By Corollary \ref{cor-7}, we have $E(G)=QE(G)<3+\sqrt{3(2n-1)(2n-3)}$.
By Corollary \ref{cor-3}, we have
$$E(G)=QE(G) \left\{\begin{array}{ll}
{>2n} & \mbox{ if $\gamma_{n} = 0$},\\
{\geq 12n \cdot \frac{\sqrt{\gamma_{2n}}}{3+\gamma_{2n}}} & \mbox{ if $\gamma_{2n} > 0$ with equality iff $G \cong K_{4,4}\backslash F= C_{4}\Box P_2$}.
\end{array}\right.$$
Since $\frac{\sqrt{\gamma_{2n}}}{3+\gamma_{2n}}$ is increased on $\gamma_{2n}\in (0,3)$,
we get the required results by Lemma \ref{lem-19}.
\end{proof}

\begin{remark}\label{rem-6}
First by  putting $n=6k+1$, we have
$$\small{\begin{array}{ll}
\lim_{n\to \infty} 6\sqrt{n} \cdot \frac{\sqrt{{2\cos(\frac{2\pi\lfloor\frac{n}{6}\rfloor}{n})-1}}}{1+\cos(\frac{2\pi\lfloor\frac{n}{6}\rfloor}{n})}
&=\lim_{k\to \infty} 4\sqrt{6k+1}\cdot\sqrt{2\cos(\frac{2\pi k}{6k+1})-1}
=\lim_{k\to \infty} 4\sqrt{\frac{1-2\cos(\frac{2\pi k}{6k+1})}{\frac{1}{6k+1}}}\\
&=\lim_{k\to \infty} 4\sqrt{\frac{4\pi\sin{(\frac{2\pi k}{6k+1})}}{3}}
=4\cdot3^{-\frac{1}{4}}\sqrt{2\pi}.
\end{array}}$$
Similarly, $\lim_{n\to \infty} 6\sqrt{n} \cdot \frac{\sqrt{{2\cos(\frac{2\pi\lfloor\frac{n}{6}\rfloor}{n})-1}}}{1+\cos(\frac{2\pi\lfloor\frac{n}{6}\rfloor}{n})}=4\cdot3^{-\frac{1}{4}}\sqrt{2\pi}$ if $n=6k+2$, and $\lim_{n\to \infty} 6\sqrt{n} \cdot \frac{\sqrt{1-2\cos(\frac{2\pi\lceil\frac{n}{6}\rceil}{n})}}{2-\cos(\frac{2\pi\lceil\frac{n}{6}\rceil}{n})}=4\cdot3^{-\frac{1}{4}}\sqrt{2\pi}$ for the rest of $n$.
It implies that $\frac{QE(C_n\Box P_2)}{\sqrt{n}}>4\cdot3^{-\frac{1}{4}}\sqrt{2\pi}$ while $n$ is sufficiently large.
\end{remark}

In the following Table 1, we list the values for exact values and lower bounds of $QE(C_n\Box P_2)$ from $2n=6$ to $20$. The column on exact item lists the exact values of $QE(C_n\Box P_2)$ counted by definition of energy, the other columns list the values of lower bounds of $QE(C_n\Box P_2)$ counted by corresponding formula of inequalities labeling from (\ref{EQ-eq-1}) to (\ref{EQ-eq-24}) and (\ref{EQ-eq-18}). It is clear that the values in the column corresponding our formula (the right of (\ref{EQ-eq-18})) are closer to exact values. Similarly, we list in Table 2 such values for upper bounds of $QE(C_n\Box P_2)$, where the values in the column corresponding our formula (the left of (\ref{EQ-eq-18})) are closer to exact values.

\begin{table}[H]
\caption{ The exact value of $QE(C_n\Box P_2)$ and some lower bounds of $QE(C_n\Box P_2)$}
\begin{tabular*}{15cm}{p{40pt}|p{41pt}p{41pt}p{41pt}p{41pt}p{41pt}p{41pt}p{62pt}}
\hline
\diagbox [width=3.8em,trim=l] {$~~~~2n$}{\scriptsize$QE(G)$}& ~\textcolor[rgb]{1.00,0.00,0.00}{Exact} & $~~~(\ref{EQ-eq-1})$ & $~~~(\ref{EQ-eq-2})$ & $~~~(\ref{EQ-eq-3})$& $~~~(\ref{EQ-eq-4})$ & $~~~(\ref{EQ-eq-5})$ & \textcolor[rgb]{1.00,0.00,0.00}{\footnotesize{Right of $(\ref{EQ-eq-18})$}} \\\hline
~~~~6   &\textcolor[rgb]{1.00,0.00,0.00}{8.0000 }   &6.0000    &2.0000  &3.4641  &4.0000 &0.0000  &~\textcolor[rgb]{1.00,0.00,0.00}{6.0000}       \\\hline
~~~~8   &\textcolor[rgb]{1.00,0.00,0.00}{12.0000}   &6.0000    &2.0000  &3.4641  &4.0000 &0.0000  &~\textcolor[rgb]{1.00,0.00,0.00}{12.0000}       \\\hline
~~~~10  &\textcolor[rgb]{1.00,0.00,0.00}{14.4721}   &6.0000    &2.0000  &3.4641  &4.0000 &0.0000  &~\textcolor[rgb]{1.00,0.00,0.00}{10.9646}      \\\hline
~~~~12  &\textcolor[rgb]{1.00,0.00,0.00}{16.0000}   &6.0000    &2.0000  &3.4641  &4.0000 &0.0000  &~\textcolor[rgb]{1.00,0.00,0.00}{12.0000}       \\\hline
~~~~14  &\textcolor[rgb]{1.00,0.00,0.00}{20.1957}   &6.0000    &2.0000  &3.4641  &4.0000 &0.0000  &~\textcolor[rgb]{1.00,0.00,0.00}{12.8567 }      \\\hline
~~~~16  &\textcolor[rgb]{1.00,0.00,0.00}{23.3137}   &6.0000    &2.0000  &3.4641  &4.0000 &0.0000  &~\textcolor[rgb]{1.00,0.00,0.00}{18.0964}       \\\hline
~~~~18  &\textcolor[rgb]{1.00,0.00,0.00}{25.6459}   &6.0000    &2.0000  &3.4641  &4.0000 &0.0000  &~\textcolor[rgb]{1.00,0.00,0.00}{18.0000}       \\\hline
~~~~20  &\textcolor[rgb]{1.00,0.00,0.00}{28.9443}   &6.0000    &2.0000  &3.4641  &4.0000 &0.0000  &~\textcolor[rgb]{1.00,0.00,0.00}{21.9293}     \\\hline

\end{tabular*}
\end{table}

\begin{table}[H]
\caption{ The exact value of $QE(C_n\Box P_2)$ and some upper bounds of $QE(C_n\Box P_2)$}
\begin{tabular*}{15cm}{p{40pt}|p{52pt}p{52pt}p{52pt}p{52pt}p{52pt}p{55pt}}
\hline
\diagbox [width=3.8em,trim=l] {~~~~$2n$}{\scriptsize{$QE(G)$}}& ~\textcolor[rgb]{1.00,0.00,0.00}{Exact} & $~~~~(\ref{EQ-eq-21})$ & $~~~~(\ref{EQ-eq-22})$ & $~~~~(\ref{EQ-eq-23})$& $~~~~(\ref{EQ-eq-24})$ & \textcolor[rgb]{1.00,0.00,0.00}{\footnotesize{Left of $(\ref{EQ-eq-18})$}} \\\hline
~~~~~6   &\textcolor[rgb]{1.00,0.00,0.00}{8.0000}    &30.0000    &21.9017  &23.6499  &26.0000  &~\textcolor[rgb]{1.00,0.00,0.00}{9.7082}        \\\hline
~~~~~8   &\textcolor[rgb]{1.00,0.00,0.00}{12.0000}   &42.0000    &29.3939  &33.5710  &38.0000  &~\textcolor[rgb]{1.00,0.00,0.00}{13.2470 }       \\\hline
~~~~~10  &\textcolor[rgb]{1.00,0.00,0.00}{14.4721}   &54.0000    &36.6449  &45.0379  &50.0000  &~\textcolor[rgb]{1.00,0.00,0.00}{16.7477 }       \\\hline
~~~~~12  &\textcolor[rgb]{1.00,0.00,0.00}{16.0000}   &66.0000    &43.7490  &57.8720  &62.0000  &~\textcolor[rgb]{1.00,0.00,0.00}{20.2337}        \\\hline
~~~~~14  &\textcolor[rgb]{1.00,0.00,0.00}{20.1957}   &78.0000    &50.7504  &71.9243  &74.0000  &~\textcolor[rgb]{1.00,0.00,0.00}{23.7123}        \\\hline
~~~~~16  &\textcolor[rgb]{1.00,0.00,0.00}{23.3137}   &90.0000    &57.6742  &87.0799  &86.0000  &~\textcolor[rgb]{1.00,0.00,0.00}{27.1868}       \\\hline
~~~~~18  &\textcolor[rgb]{1.00,0.00,0.00}{25.6459}   &102.0000   &64.5367  &103.2493 &98.0000  &~\textcolor[rgb]{1.00,0.00,0.00}{30.6586 }      \\\hline
~~~~~20  &\textcolor[rgb]{1.00,0.00,0.00}{28.9443}   &114.0000   &71.3489  &120.3610 &110.0000 &~\textcolor[rgb]{1.00,0.00,0.00}{34.1288}       \\\hline

\end{tabular*}
\end{table}

\end{document}